\numberwithin{equation}{section}
\newtheorem{theorem}{Theorem}[section]
\newtheorem{corollary}[theorem]{Corollary}
\newtheorem{lemma}[theorem]{Lemma}
\newcommand{\la}{\langle}
\newcommand{\ra}{\rangle}
\renewcommand{\O}{{\mathcal{O}}}
\newcommand{\R}{{\mathbb{R}}}
\newcommand{\cd}{{\,\cdot\,}}
\newcommand{\ang}{{\not\negmedspace\nabla}}
\newcommand{\good}{{\not\negmedspace\partial}}
\newcommand{\tC}{{\tilde{C}}}
\newcommand{\ttC}{{\tilde{\tilde{C}}}}
\newcommand{\tJ}{{\tilde{J}}}
\newcommand{\tK}{{\tilde{K}}}
\renewcommand{\S}{{\mathbb{S}}}
\begin{document}
\bibliographystyle{plain}

\title[Existence of quasilinear wave equations]
{
Long-time existence for systems of quasilinear wave equations
}

\author{Jason Metcalfe}
\address{Department of Mathematics, University of North Carolina,
  Chapel Hill, NC 27599-3250}
\email{metcalfe@email.unc.edu}
\author{Taylor Rhoads}
\address{Department of Mathematics, Vanderbilt University, Nashville,
  TN 37240}
\email{taylor.m.rhoads@vanderbilt.edu}

\thanks{The first author was supported in part by Simons Foundation
  Collaboration Grant 711724 and NSF grant DMS-2054910.}
 
\begin{abstract}
We consider quasilinear wave equations in $(1+3)$-dimensions where the nonlinearity
$F(u,u',u'')$ is
permitted to depend on the solution rather than just its derivatives.
For scalar equations, if $(\partial_u^2 F)(0,0,0)=0$, almost global
existence was established by Lindblad.  We seek to show a related
almost global existence result for coupled systems of such equations.
To do so, we will rely upon a variant of the $r^p$-weighted local energy
estimate of Dafermos and Rodnianski that includes a ghost weight akin
to those used by Alinhac.  The decay that is needed to close the
argument comes from space-time Klainerman-Sobolev type estimates from
the work of Metcalfe, Tataru, and Tohaneanu.
\end{abstract}

\maketitle

\section{Introduction}
In this article, we shall examine long-time existence for
systems of $(1+3)$-dimensional quasilinear wave equations with small
initial data where the nonlinearity is
permitted to depend on the solution rather than just its derivatives.
In particular, for $\Box = \partial_t^2-\Delta$, we shall examine
\begin{equation}
  \label{main_equation}
\begin{cases}
  \Box u^I = F^I(u,u',u''),\quad (t,x)\in \R_+\times \R^3,\quad
  I=1,2,\dots, M,\\
  u(0,\cd)=f,\quad \partial_tu(0,\cd)=g.
\end{cases}
\end{equation}
Here $u=(u^1,\dots, u^M)$.
We use the notation $u'=\partial u = (\partial_t u,\nabla u)$ for the
space-time gradient.
The smooth function $F$ vanishes to second order at the
origin, and it is linear in the $u''$ components.  Moreover, we shall
assume that
\begin{equation}
  \label{nou2}
  (\partial_{u^J} \partial_{u^K} F^I)(0,0,0)=0,\quad \forall\, I, J, K=1,2,\dots,M,
\end{equation}
which has the effect of disallowing $u^2$-type terms at the quadratic
level for $F$.  In \cite{Lindblad}, almost global existence,
which shows that the lifespan grows exponentially as the size of the
data shrinks, was proved for scalar equations.  In the current
article, we seek similar lower bounds on the lifespan for systems.

In the sequel, we use Einstein's summation convention.  Repeated Greek
letters $\alpha, \beta, \gamma$ are understood to sum from $0$ to $3$
where $x_0=t$.  Repeated lower case Roman letters $i, j, k$ are summed
from $1$ to $3$, and repeated upper case letters $I, J, K$ will be
summed from $1$ to $M$.

For simplicity, we shall truncate the nonlinearity at the quadratic
level:
\begin{equation}\label{quadratic}F^I(u,u', u'') = a^{I,\alpha}_{JK} u^J\partial_\alpha u^K +
  b^{I,\alpha\beta}_{JK}\partial_\alpha u^J \partial_\beta u^K +
  A^{I,\alpha\beta}_{JK} u^K \partial_\alpha\partial_\beta u^J +
  B^{I,\alpha\beta\gamma}_{JK} \partial_\gamma u^K
  \partial_\alpha\partial_\beta u^J.\end{equation}
In the small data regime, higher order terms are better behaved.
The constants will be assumed to satisfy the symmetry conditions
\begin{equation}\label{symmetry}A^{I,\alpha\beta}_{JK} = A^{I,\beta\alpha}_{JK} =
  A^{J,\alpha\beta}_{IK},\quad B^{I,\alpha\beta\gamma}_{JK} =
  B^{I,\beta\alpha\gamma}_{JK} = B^{J,\alpha\beta\gamma}_{IK}.
\end{equation}

Our main result is the following statement of almost global existence.
\begin{theorem}
  \label{main_thm}
Suppose that $f, g\in
(C^\infty_c(\R^3))^M$.  Moreover, assume that the smooth function $F$ vanishes to second order
at the origin, satisfies \eqref{nou2}, and is subject to the
symmetry conditions \eqref{symmetry}.  Then, for $N\in \mathbb{N}$ sufficiently
large, there are constants $c, \varepsilon_0>0$ so that if
  \begin{equation}
    \label{smallness}
     \sum_{|\mu|\le N+1} \|\partial^\mu f\|_{L^2} + \sum_{|\mu|\le N}
     \|\partial^\mu g\|_{L^2}\le \varepsilon
  \end{equation}
with $\varepsilon <\varepsilon_0$,
then \eqref{main_equation} has a unique solution with $u\in
(C^\infty([0,T_\varepsilon]\times \R^3))^M$ where 
\begin{equation}
  \label{lifespan}
  T_\varepsilon = \exp(c/\varepsilon^{\frac{1}{3}}).
\end{equation}
\end{theorem}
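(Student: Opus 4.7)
The plan is to run a continuity/bootstrap argument controlling three coupled quantities at each level of commutation: a high-regularity energy, a weighted spacetime norm built from the Dafermos--Rodnianski $r^p$-weighted estimate combined with an Alinhac-type ghost weight, and pointwise bounds derived from the space-time Klainerman--Sobolev inequalities of Metcalfe--Tataru--Tohaneanu. Throughout, I would commute \eqref{main_equation} only with coordinate derivatives $\partial_\alpha$ and spatial rotations $\Omega_{ij} = x_i\partial_j - x_j\partial_i$; Lorentz boosts and scaling are avoided so that the argument is robust for systems and so that the symmetries \eqref{symmetry} and the condition \eqref{nou2} survive commutation. Each commuted equation takes the form $\Box(Z^\alpha u) = F^I_\alpha + (\text{lower order})$, where the error terms are cubic or better in the small quantities $u, u', u''$ and can therefore be treated perturbatively.

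For the energy step, I would pair the commuted equation with $\partial_t Z^\alpha u$ and absorb the quasilinear contributions $A u \partial^2 u + B \partial u \partial^2 u$ using \eqref{symmetry} together with $L^\infty$-smallness of $u$ and $\partial u$. The key additional control comes from a weighted spacetime estimate: using a vector-field multiplier $\chi(r)\partial_r$ modified by a ghost weight $e^{-q(r-t)}$ with $q$ bounded and increasing, together with an $r^p$-weight near null infinity, I would derive a bound of the form
\[
\iint (1+|r-t|)^{-1-\delta}\bigl(|u|^2 \langle r\rangle^{-2} + |\partial u|^2\bigr)\, dx\, dt \;\lesssim\; E(0) + (\text{errors}),
\]
together with a flux on outgoing null hypersurfaces. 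This furnishes the integrable $(1+|r-t|)^{-1-\delta}$ spacetime weight that will be used to tame terms which do not decay fast enough pointwise.

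Pointwise decay is then extracted from the weighted norms via the space-time Klainerman--Sobolev embeddings, yielding $|u|\lesssim \varepsilon/\langle t+r\rangle$ and $|\partial u|\lesssim \varepsilon \langle t+r\rangle^{-1}\langle t-r\rangle^{-1/2}$ up to logarithmic corrections and after commuting sufficiently many vector fields. The nonlinear terms in \eqref{quadratic} are then estimated: the $(\partial u)(\partial u)$ and $(\partial u)(\partial^2 u)$ pieces produce the usual pointwise factor $\varepsilon/\langle t\rangle$, while the delicate $u\,\partial u$ and $u\,\partial^2 u$ pieces are handled by placing $u$ in $L^\infty$ against the ghost-weighted $L^2$ control of the remaining derivative factor. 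The would-be worst contribution $u^2$ is absent by \eqref{nou2}, and this is the one place where that hypothesis is indispensable: the factor $u$ does not enjoy the $\langle t-r\rangle^{-1/2}$ gain that $\partial u$ does, so $u^2$ could not be paired against the spacetime weight to close.

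Assembling these pieces into a single Gronwall-type inequality produces a bound of the schematic form $E_N(t) \lesssim \varepsilon^2 + \varepsilon \int_0^t \langle s\rangle^{-1}(\log\langle s\rangle)^{k} E_N(s)\, ds$, which integrates to $E_N(t) \lesssim \varepsilon^2 \exp\bigl(C\varepsilon (\log\langle t\rangle)^{k+1}\bigr)$; the bootstrap closes as long as $\varepsilon (\log\langle t\rangle)^{k+1}$ is small, giving precisely the lifespan $T_\varepsilon = \exp(c\varepsilon^{-1/(k+1)})$ of \eqref{lifespan} with $k=2$. The main obstacle, and the source of the exponent $1/3$ rather than a stronger one, is the handling of the undifferentiated $u$: the slow $\langle t+r\rangle^{-1}$ decay of $u$ must be compensated by the integrable ghost weight via the weighted estimate, and carefully quantifying how many logarithmic losses accumulate from this pairing together with the weighted energy step and the Klainerman--Sobolev embedding is the delicate bookkeeping that is genuinely new compared to the scalar argument of \cite{Lindblad}.
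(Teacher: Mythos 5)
Your broad framework---an $r^p$-weighted multiplier estimate with an Alinhac-type ghost weight, combined with the space-time Klainerman--Sobolev bounds of \cite{MTT}, closed via a bootstrap that accumulates logarithmic factors to produce the $\exp(c/\varepsilon^{1/3})$ lifespan---aligns with the paper's strategy in spirit. However, there are two genuine gaps.

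First, you explicitly discard the scaling vector field $S=t\partial_t+r\partial_r$, saying you commute only with translations and rotations. The paper retains $S$ in the admissible collection $Z$ (only the Lorentz boosts are dropped), and $S$ is indispensable: the identity
\[
\partial_t-\partial_r \;=\; \frac{2}{t-r}S \;-\; \frac{t+r}{t-r}\,(\partial_t+\partial_r)
\]
is the only mechanism in the argument that converts a bad derivative into a good derivative plus a gain of $\langle t-r\rangle^{-1}$, and it is applied repeatedly (e.g.~in passing from \eqref{crt1} to \eqref{crt}, and in bounding terms such as $IX_k$--$XII_k$). Moreover, the decay estimates \eqref{crt}--\eqref{ctt} from \cite{MTT}, which you invoke as your source of decay, contain $Z^{\le 4}$ norms where $Z$ includes $S$; without $S$ those estimates do not hold as stated. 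Your stated reason for avoiding $S$---preserving \eqref{nou2} and \eqref{symmetry}---is unfounded, since $[\Box,S]=2\Box$ is a benign commutator and the symmetry hypotheses on the constant coefficients are not affected by applying vector fields.

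Second, your plan for the dangerous quadratic terms $u\,\partial u$---``placing $u$ in $L^\infty$ against the ghost-weighted $L^2$ control of the remaining derivative factor''---does not work when that derivative is the \emph{bad} direction $\partial_t-\partial_r$ and lands on the top-order factor. The ghost weight only improves the \emph{good} derivatives $(\partial_t+\partial_r,\ang)$, providing $\langle t-r\rangle^{-1/2}$ weights for those; it gives nothing extra for $\partial_t-\partial_r$. And since this derivative sits on $Z^{\le N}u$, you cannot afford to spend a vector field applying the scaling identity above. The paper handles exactly this term (schematically $u\,(\partial_t-\partial_r)Z^{\le N}u$ inside the $r^p$/ghost-weighted inequality) by a normal-forms style integration by parts in $(\partial_t-\partial_r)$: when the derivative lands on the multiplier $e^{-\sigma_U(t-r)}(\partial_t+\partial_r)(rZ^{\le N}u)$ it reproduces $\Box Z^{\le N}u$ up to angular and lower-order corrections, which is then substituted via the equation to yield quartic interactions, and the residual worst quartic term is closed by writing $w\,\partial w=\tfrac{1}{2}\partial(w^2)$ and integrating by parts once more. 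This step is the central new idea that lets the argument extend from scalars to systems (for scalars the analogous manipulation is done immediately via the chain rule), and your proposal does not identify it.

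A smaller remark: the paper sets up a Picard iteration and derives a closed inequality of the form \eqref{MkGoal2} for the iterates, rather than a Gronwall inequality for a single bootstrap quantity. This is a structural rather than substantive difference, but the paper's route is what makes precise bookkeeping of the $(\log\langle T\rangle)^3$ versus $(\log\langle T\rangle)^5$ losses tractable, which in turn pins down the $1/3$ exponent.
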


To aid the exposition, we have restricted to the case of compactly
supported initial data.  Without loss of generality, we shall take
\begin{equation}
  \label{WLOG}
  \text{supp}\, f^I, \text{supp}\, g^I\subset \{|x|\le 2\},\quad \forall \,
  I=1,2,\dots, M.
\end{equation}
  Small data in a sufficiently weighted space
would also suffice.  In \cite{Lindblad}, the lower bound of the lifespan was
$\exp(c/\varepsilon)$.  The difference in the lifespan between
\cite{Lindblad} and Theorem \ref{main_thm} is primarily
due to a logarithmic loss that occurs as a part of an endpoint Hardy
inequality.  See Lemma~\ref{lemma.Hardy}.   While refinements of our argument to improve
the power in \eqref{lifespan} are likely 
possible, it is not clear
what the sharp power is.

Equations such as \eqref{main_equation} with nonlinearity that depends
on the solution rather than just its derivatives do not mesh as simply
with the energy methods that are typically employed to prove long-time
existence.  In \cite{Hormander}, without the hypothesis \eqref{nou2},
a lower bound of $\exp(c/\varepsilon)$ was established in
$(1+4)$-dimensions.  For scalar equations, the additional hypothesis
\eqref{nou2} was, moreover, found to be sufficient to guarantee global
existence for sufficiently small data.  The analogous results in
$(1+3)$-dimensions appeared in \cite{Lindblad} where the lifespan was
shown to exceed $c/\varepsilon^2$ without \eqref{nou2} and almost
global existence was provided for scalar equations.

In both \cite{Hormander} and \cite{Lindblad}, the restriction to
scalar equations is necessitated by the use of the chain rule to write
$u\cdot \partial u = \frac{1}{2}\partial u^2$ interactions in
divergence form.  This special form, in turn, allows for easier
estimation of the solution rather than only its derivatives.  See,
e.g., \cite[Proposition 1.8]{Lindblad}.
The article
\cite{m_morgan} extended the result of \cite{Hormander} by establishing small data global existence for systems
\eqref{main_equation} subject to \eqref{nou2} in $(1+4)$-dimensions.
We establish the $(1+3)$-dimensional analog here.

The principle source of decay in our proof is obtained from space-time
Klainerman-Sobolev estimates as were proved in \cite{MTT}.  This will
be paired with variants of the integrated local energy decay
estimates.  In \cite{dafermos_rodnianski}, $r^p$-weighted local energy
estimates, which provide improved bounds on the ``good'' components of
the gradient: $\partial_t+\partial_r$ and $\ang := \nabla -
\frac{x}{r}\partial_r$, were proved.  These will be combined with a ghost weight,
which originates from \cite{Alinhac_ghostweight}.  This permits a
further improvement of the bounds in the vicinity of the light cone and
meshes particularly well with the space-time Klainerman-Sobolev
estimates of \cite{MTT}.

While the proof uses the method of invariant vector fields, it will
not rely on the Lorentz boosts.  While Lorentz boosts are perfectly
acceptable for systems such as \eqref{main_equation}, they can limit
further extensions to, e.g., multiple speed settings, equations in
exterior domains, or equations on stationary background geometries. 
Our proof is readily adaptable to Dirichlet wave equations exterior
to, say, star-shaped obstacles.  We do not include these extraneous
details here.  When combined with \cite{DZ}, \cite{DMSZ}, and
\cite{MS4}, it (largely) completes the extension of the long-time
existence results of \cite{Hormander} and \cite{Lindblad} to exterior
domains.  The work on systems in \cite{m_morgan} was also completed
exterior to star-shaped obstacles.  The restriction to star-shaped
obstacles is likely a convenience.  It is anticipated that any geometry
that permits a sufficiently rapid decay of local energy would
suffice.  See, e.g., \cite{HM4d, HM3d}.

The key aspects to the proof are to effectively bound the solution $u$
when typical energy methods estimate $\partial u$ and to obtain
additional decay from the derivative that must be present in at least
one factor of every nonlinear term.  In particular, it was the former
that restricted the analysis to scalar equations in preceding results.
The $r^p$-weights and ghost weights help with both aspects.  In
particular, when combined with rather standard Hardy-type estimates,
improved bounds on the local energy of the solution without
derivatives, when compared to e.g.~\eqref{le_pert}, result.  When
attempting to gain additional decay from the derivative that must
appear in at least one factor of every term of the nonlinearity, one
often relies on the scaling vector field, which near the light cone
gives additional decay in $t-|x|$.  The ghost weight allows us to take
advantage of this additional decay off of the light cone.

Since derivatives can be exchanged for extra decay using the scaling
vector field and since the $r^p$-weighted and ghost weighted estimates
allow for much larger weights for the good derivatives, which in
essence provides additional decay that can be used on other factors,
one can quickly become convinced that the worst possible nonlinear
terms are of the form $u (\partial_t-\partial_r)u$.  Moreover when all
of the vector fields land on the differentiated factor, one cannot
afford to lose the additional vector field that would result from
using the scaling vector field to get additional decay.  In this case,
we move the $\partial_t-\partial_r$ using integration by parts.
Within the local energy estimates, it could land on the weights or the
lower order factor, which allows us to gain additional decay, in the
latter case by using the scaling vector field.  Additionally it could
land on the multiplier, which has the basic form
$\partial_t+\partial_r$ and modulo better behaved terms results in
$\Box$ effectively turning these quadratic interactions into better cubic
interactions.

This article is organized as follows.  In the next subsection, we shall
gather some notation and preliminary results that will be used
frequently throughout the paper.  In Section 2, the integrated local
energy estimates will be proved.  Section 3 contains our sources of
decay, which are primarily space-time versions of the
Klainerman-Sobolev inequality.  Finally, the main theorem is proved in
Section 4.

\subsection{Notation}\label{notation}
 The vector fields that we rely on are
\[Z=(\partial_t,\partial_1,\partial_2,\partial_3,\Omega_1,\Omega_2,\Omega_3,S)\]
where
\[S=t\partial_t+r\partial_r,\quad   \Omega=x\times \nabla\]
represent the scaling vector field and generators of (spatial)
rotations, respectively.
We will frequently rely on the orthogonal decomposition
\[\nabla = \frac{x}{r}\partial_r + \ang,\]
and we shall use $\good = (\partial_t+\partial_r,\ang)$ as an
abbreviation for the ``good'' derivatives.  We note that
\[\ang = -\frac{x}{r^2}\times \Omega,\]
and as such,
\begin{equation}\label{angBound}|\ang u|\le \frac{1}{r}|Z u|.\end{equation}
Moreover, the following commutator will be used frequently in the
proofs of local energy estimates, as it was in the seminal work
\cite{Morawetz}:
\begin{equation}
  \label{angCommutator}
  [\nabla, \partial_r] = [\ang,\partial_r] = \frac{1}{r}\ang.
\end{equation}

The admissible vector fields are well-known to satisfy
\begin{equation}
  \label{commutators}
  [\Box, \partial]=[\Box, \Omega_{j}]=0,\quad [\Box, S]=2\Box.
\end{equation}
Moreover, we have
\begin{equation}\label{commutators2}[Z,\partial]\in
  \text{span}(\partial),\quad |[Z,\good]u|\le \frac{1}{r}|Zu| + |\good
  u|.
\end{equation}
We shall abbreviate
\[|Z^{\le N} u| = \sum_{|\mu|\le N} |Z^\mu u|,\quad |\partial^{\le N}
  u| = \sum_{|\mu|\le N} |\partial^\mu u|.\]

We use $L^p L^q$ as an abbreviation for $L^p_tL^q_x([0,T]\times
\R^3) = L^p([0,T]; L^q(\R^3))$.  In several circumstances, it will be
convenient to do an inhomogeneous dyadic decomposition of $\R^3$, and
for this purpose we denote
\[A_R=\{R<|x| <2R\},\quad
  \tilde{A}_R=\Bigl\{\frac{7}{8}R<|x|<\frac{17}{8}R\Bigr\}\quad
  \text{if }R>1,\]
and $A_1=\{|x|<2\}$, $\tilde{A}_1=\{|x|<17/4\}$.
For the standard integrated local energy estimates, 
we shall employ the following notations from 
\cite{Tataru_price}, \cite{MTT}:
\[\|u\|_{LE} = \sup_{j\ge 0} 2^{-j/2} \|u\|_{L^2L^2([0,T]\times A_{2^j})},\quad \|u\|_{LE^1} = \|(\partial u, |x|^{-1}u)\|_{LE}.\]

In the proof of the local energy estimates, we will often desire a $C^1(\R)$,
bounded, nondecreasing function and for these purposes set
\[\sigma_U(z)=\frac{z}{U+|z|},\quad U>0.\]
In the sequel, we shall also need dyadic decompositions in $t-r$, so
we set
\[X_U = \{(t,x)\in [0,T]\times \R^3\,:\, U<t-r<2U\}\: \text{for $U>1$}, \quad
  X_1=\{(t,x)\in [0,T]\times \R^3\,:\, |t-r|<2\},\]
with a similar enlargement being denoted by $\tilde{X}_U$.

The estimates from \cite{MTT} rely on a mixed decomposition where the cone is
divided in $|x|$ away from the light cone and in $t-|x|$ near the
cone.  We set $C=\{r\le t+2\}$.
Due to the simplifying assumption that the data are
compactly supported, it suffices to consider only this region, though
we believe that these estimates can be extended to all of $[0,T]\times
\R^3$ in a straightforward fashion.

We first divide into dyadic intervals in time
$C_\tau = \{t\in [\tau,2\tau]\cap [0,T], r\le t+2\}$.  Next, we shall decompose
dyadically in $r$ or $t-r$ depending on the proximity to the light
cone.  For $R, U>1$, we set
\[C^R_\tau = C_\tau \cap \{R<r<2R\},\quad C_\tau^U = C_\tau\cap
  \{U<t-r<2U\}\]
and
\[C^{R=1}_\tau = C_\tau \cap \{r<2\},\quad C^{U=1}_\tau = C_\tau
  \cap \{|t-r|<2\}.\]
We note that
\[C_\tau = \Bigl(\bigcup_{1\le R\le \tau/4} C_\tau^R\Bigr) \cup \Bigl(\bigcup_{1\le U\le
    \tau/4} C^U_\tau\Bigr)\cup C^{\frac{\tau}{2}}_\tau,\]
where
\[C^{\frac{\tau}{2}}_\tau = C_\tau\cap \{t-r\ge \frac{\tau}{2}\}\cap
  \{r\ge \frac{\tau}{2}\}.\]
We use $\tilde{C}^R_\tau$, $\tilde{C}_\tau^U$ to
denote enlargements of these sets on both the $R/U$ and $\tau$
scales.  In the latter case, we enlarge from $[\tau,2\tau]\cap [0,T]$ to
$[(7/8)\tau, 2\tau]\cap [0,T]$.  Subsequently $\tilde{\tilde{C}}^R_\tau$ and
$\tilde{\tilde{C}}^U_\tau$ will indicate further enlargements.  The key observation is that
\[\la r\ra \approx R,\quad t-r\approx \tau, \quad\text{ on } C^R_\tau,
  \tilde{C}^R_\tau, \tilde{\tilde{C}}^R_\tau,\quad \text{with } 1\le R\le \tau/4\]
and
\[r\approx \tau,\quad \la t-r\ra \approx U,\quad \text{ on }
  C^U_\tau, \tilde{C}^U_\tau, \tilde{\tilde{C}}^U_\tau,\quad
  \text{with } 1\le U\le \tau/4.\]
In this sense, the $C^{\frac{\tau}{2}}_\tau$ region can be thought of as either a
$C^R_\tau$ or a $C^U_\tau$ region.
Here and throughout, $R, U$ are understood to run over dyadic values.
Here we have set $\la r\ra$ to be a smooth function so that $\la
r\ra\ge 3$ and $\la
r\ra \approx r$ for $r\gg 1$.  For simplicity, we could simply take
$\la r\ra = (3+r)$ and $\la T \ra = (3+T)$.

In the sequel, we shall need cutoffs to localize to certain regions.
We fix $\beta\in C^\infty(\R)$ so that $\beta\equiv 1$ for $z<1$ and
$\beta \equiv 0$ for $z>2$.  We then set
\[\beta_{<R}(z)=\beta(z/R), \quad \beta_{>R}(z)=1-\beta_{<R}(z).\]


\section{Local energy estimate}
In this section, we shall collect some integrated local energy
estimates, which will serve as the main linear estimate for our proof
of almost global existence.  The first of these is a now standard version of the
original estimates of \cite{Morawetz}.  We proceed to explore a
variant of this that only bounds the ``good'' derivatives but with
much better weights.  What results is a mixture of the ghost weight
method of \cite{Alinhac_ghostweight} (see also the related estimates
in \cite{lindblad_rodnianski}) and the $r^p$-weighted local
energy estimates of \cite{dafermos_rodnianski}.

In this examination of a quasilinear problem, it will be helpful to
have estimates on small, time-dependent perturbations of the flat
operator $\Box$.  To this end, we define
\[(\Box_h u)^I = (\partial_t^2-\Delta)u^I -
h^{I,\alpha\beta}_J(t,x)\partial_\alpha\partial_\beta u^J.\]
Here we shall assume that
\begin{equation}
  \label{hsym}
  h^{I,\alpha\beta}_J = h^{I,\beta\alpha}_J =
  h^{J,\alpha\beta}_I, \qquad h^{I,\alpha\beta}_J\in C^1([0,T]\times
  \R^3).
\end{equation}
We shall also abbreviate
\[|h|=\sum_{I,J=1}^M\sum_{\alpha, \beta=0}^3
  |h^{I,\alpha\beta}_J|,\quad |\partial h| = \sum_{I,J=1}^M
  \sum_{\alpha,\beta,\gamma=0}^3 |\partial_\gamma h^{I,\alpha\beta}_J|.\]

We begin by recalling the local energy estimate for perturbations of
$\Box$ that was proved in \cite{ms_mathz}.  See, also,
\cite{Sterbenz}, \cite{MS}, \cite{mt_para, mt_exterior}, and \cite{MST}.
\begin{theorem}
  \label{thm_le_pert}  Suppose that $h$ satisfies \eqref{hsym}, that $|h|$
  is sufficiently small,
that $u\in (C^2([0,T]\times\R^3))^M$, and that for all $t\in [0,T]$, $|\partial^{\le 1}
u(t,x)|\to 0$ as $|x|\to \infty$.  Then,
  \begin{multline}
    \label{le_pert}
    \|\partial u\|^2_{L^\infty L^2} + \|u\|^2_{LE^1} \lesssim \|\partial
    u(0,\cd)\|^2_{L^2} + \int_0^T\int |\Box_h u|\Bigl(|\partial u| +
    \frac{|u|}{r}\Bigr)\,dx\,dt
\\    +\int_0^T \int\Bigl( |\partial
h| + \frac{|h|}{\la r\ra}\Bigr)
|\partial u| \Bigl(|\partial
    u|+\frac{|u|}{r}\Bigr)\,dx\,dt.
   \end{multline}
\end{theorem}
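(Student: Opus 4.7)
The plan is to apply the classical multiplier method with a multiplier that combines the time-like energy multiplier $\partial_t u^I$ with a Morawetz-type spatial multiplier adapted to the dyadic structure of $LE$. Concretely, test $(\Box_h u)^I$ against $X u^I := \partial_t u^I + f(r)\partial_r u^I + \frac{f(r)}{r} u^I$, where $f$ is a bounded nondecreasing function built from dyadic sums of $\sigma_{2^j}(r)$ so that the flat computation produces the positive quadratic form giving the $LE^1$ norm. Integrating over $[0,T]\times \R^3$ and using the decay hypothesis on $u$ at spatial infinity, the boundary-at-infinity contributions drop.

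First I would process the flat principal part $\Box u^I \cdot Xu^I$. The time-multiplier piece yields the usual energy identity whose time-slice terms give $\|\partial u\|^2_{L^\infty L^2}$. The Morawetz piece $\Box u^I\cdot\bigl(f\partial_r + f/r\bigr)u^I$, after integrating by parts and invoking the commutator identity \eqref{angCommutator} to handle the angular derivatives, produces a positive bulk term controlling both $|\partial u|^2$ and $|u|^2/r^2$ at each dyadic shell $A_{2^j}$, which upon dyadic summation yields $\|u\|^2_{LE^1}$; this is the standard Morawetz/Keel--Smith--Sogge computation. The source contribution is bounded by $\int |\Box u^I|(|\partial u| + |u|/r)$ since $|Xu|\lesssim |\partial u| + |u|/r$.

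Next I would handle the perturbation $-h^{I,\alpha\beta}_J \partial_\alpha\partial_\beta u^J\cdot Xu^I$. Using the symmetries \eqref{hsym}, one writes this as a divergence of a quadratic expression in $\partial u$ with coefficient $h$, modulo error terms in which one derivative falls on $h$ or on the weight $f(r)$. The divergence contributes boundary terms at $t=0,T$ that are pointwise bounded by $|h||\partial u|^2$; the smallness of $|h|$ lets the $t=T$ boundary term be absorbed into $\|\partial u\|^2_{L^\infty L^2}$ on the left, and the $t=0$ term is dominated by $\|\partial u(0,\cd)\|^2_{L^2}$. The error terms have the structure
\[
\int_0^T\!\int \Bigl(|\partial h| + \tfrac{|h|}{\la r\ra}\Bigr)|\partial u|\Bigl(|\partial u| + \tfrac{|u|}{r}\Bigr)\,dx\,dt,
\]
which matches the right-hand side of \eqref{le_pert}; the factor $1/\la r\ra$ on the $|h|$ term comes from the derivative of the weight $f(r)/r$ in the Morawetz piece.

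The main obstacle is twofold. First, one must verify that the Morawetz bulk term is genuinely positive uniformly across the dyadic scales once the perturbation is turned on; this is where $|h|$ being sufficiently small is essential — a portion of the error has the same sign structure as the main term and must be absorbed, not merely bounded. Second, the angular contributions require care: $\ang u$ is controlled only when one combines the commutator \eqref{angCommutator} with an integration by parts that is compatible with the full symmetry of $h$. Once these two points are handled, summing the energy and Morawetz identities (with the Morawetz piece weighted by a small constant if needed to subordinate its error to the energy term) and applying Cauchy--Schwarz on the right-hand side yields \eqref{le_pert}.
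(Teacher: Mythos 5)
Your proposal matches the paper's (very terse) proof in all essential respects: it uses the same multiplier $C\partial_t u + \sigma_{2^j}(r)\partial_r u + \frac{\sigma_{2^j}(r)}{r} u$, the same integrations by parts via \eqref{angCommutator}, the same exploitation of the symmetries \eqref{hsym} to write the perturbative terms as a divergence plus errors of size $\bigl(|\partial h| + |h|/\la r\ra\bigr)|\partial u|$, and the same role for the smallness of $|h|$ (absorbing the $t=T$ boundary term and keeping the bulk form positive). One caution on a detail you gloss over: the $LE$ norm has an $\ell^\infty$-over-scales structure, so the paper runs the argument at a single scale $\sigma_{2^j}$ (uniformly in $j$) and \emph{then} takes a supremum over $j$; if you literally \emph{sum} the multipliers with summable coefficients $c_j$ to keep $f$ bounded, the Morawetz bulk only produces a weighted $\ell^1$-in-scales quantity, which is strictly weaker than $\|u\|_{LE^1}^2$, so the per-scale-then-sup formulation is what you actually want to write down.
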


The proof of the theorem follows by pairing $\Box_h u$ with the
multiplier $C\partial_t u + \sigma_{2^j}(r)\partial_r u +
\frac{\sigma_{2^j}(r)}{r} u$, integrating over a space-time slab, and
integrating by parts. 

We next consider the following variant of Theorem~\ref{thm_le_pert}.
It represents a combination of the ideas of \cite{dafermos_rodnianski}
and \cite{Alinhac_ghostweight}.  The former considered multipliers
with principal part of the form $r^p (\partial_t+\partial_r) u$.
Rather than considering associated flux terms to bound terms similar
to the third term in the left side below, we shall instead modify the
multiplier using a ghost weight, which originates in
\cite{Alinhac_ghostweight}.  This more readily allows us to perform
necessary manipulations to prevent a loss of regularity due to the
quasilinear nature of the problem.  It will also allow us to
subsequently integrate by parts to control a term using ideas akin to
normal forms.

\begin{theorem}  Fix $0<p<2$.  Suppose that $h$ satisfies \eqref{hsym},
 that $u\in (C^2([0,T]\times \R^3))^M$, and that for
  all $t\in [0,T]$, $|r^{\frac{p+2}{2}}\partial^{\le 1}u(t,x)|\to 0$ as $|x|\to
  \infty$.  Then for any $U>0$,
  \begin{multline}
    \label{lep}
    \|r^{\frac{p-2}{2}} \good(ru)\|^2_{L^\infty L^2}
    + \|r^{\frac{p-3}{2}} \good(ru)\|^2_{L^2L^2}
+ \|r^{\frac{p-2}{2}} (\sigma'_U(t-r))^{\frac{1}{2}} (\partial_t+\partial_r)(ru) \|^2_{L^2L^2}
\\    \lesssim \|r^{\frac{p-2}{2}}\good(ru)(0,\cd)\|^2_{L^2}   
+ \sup_{t\in [0,T]} \int r^p |h| |\partial u|\Bigl(|\partial u| + \frac{|u|}{r}\Bigr)\,dx
\\+ \sup_{t\in [0,T]} \Bigl|\int_0^t \int r^{p-1} e^{-\sigma_U(t-r)} \Box_h
   u\cdot \Bigl(\partial_t + \partial_r\Bigr)(ru)\,dx\,dt\Bigr|
    +\int_0^T\int r^{p-1} \Bigl(|\partial h|+\frac{|h|}{r}\Bigr)
    |\partial u| |(\partial_t+\partial_r)(ru)|\,dx\,dt
\\+ \int_0^T\int r^{p-1} |h| |\partial u|\Bigl(|\ang
u|+\frac{|u|}{r}\Bigr)\,dx\,dt
+\int_0^T\int |h| r^{p-1} \sigma_U'(t-r) |\partial u|
|(\partial_t+\partial_r)(ru)|\,dx\,dt
\\
+\int_0^T\int r^p \Bigl(|(\partial_t+\partial_r)h|+\frac{|h|}{r}\Bigr)
|\partial u|^2\,dx\,dt.
  \end{multline}
The implicit constant is independent of both $T$ and $U$.
\end{theorem}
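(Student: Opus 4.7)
The proof proceeds by pairing each $\Box_h u^I$ with the multiplier $r^{p-1}e^{-\sigma_U(t-r)}(\partial_t+\partial_r)(ru^I)$, summing in $I$, and integrating over $[0,T]\times\R^3$; the decay hypothesis on $r^{\frac{p+2}{2}}\partial^{\le 1}u$ ensures that all spatial boundary terms at infinity vanish. Writing $\Box_h u = \Box u - h^{I,\alpha\beta}_J\partial_\alpha\partial_\beta u^J$ and using the classical radial identity $r\Box u = (\partial_t-\partial_r)(\partial_t+\partial_r)(ru) - r^{-1}\sDelta u$ splits the flat-wave contribution into a radial-null piece and an angular piece. The raw $\Box_h u$ pairing is extracted as the third term on the right-hand side, leaving the perturbation $\int r^{p-1}e^{-\sigma_U}h^{I,\alpha\beta}_J\partial_\alpha\partial_\beta u^J\cdot(\partial_t+\partial_r)(ru^I)\,dx\,dt$ to be reduced by further integration by parts.

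For the radial-null piece, I set $\psi^I=(\partial_t+\partial_r)(ru^I)$ and $w = r^{p-2}e^{-\sigma_U(t-r)}$, so the integrand is $w(\partial_t-\partial_r)\psi\cdot\psi = \frac{1}{2}w(\partial_t-\partial_r)|\psi|^2$. Integrating by parts in $t$ produces the boundary term $\frac{1}{2}\int r^{p-2}e^{-\sigma_U}|\psi|^2\,dx\bigr|_0^T$, and integrating by parts in $r$ via $\int f\partial_r g\,dx = -\int(\partial_r f + 2f/r)g\,dx$ produces the interior coefficient
\[ -\tfrac{1}{2}(\partial_t-\partial_r)w + w/r = \tfrac{p}{2}r^{p-3}e^{-\sigma_U} + r^{p-2}e^{-\sigma_U}\sigma'_U(t-r), \]
which is positive throughout $0<p<2$ and yields $\|r^{(p-3)/2}(\partial_t+\partial_r)(ru)\|_{L^2L^2}^2$ along with the ghost-weighted norm $\|r^{(p-2)/2}(\sigma'_U)^{1/2}(\partial_t+\partial_r)(ru)\|_{L^2L^2}^2$. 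For the angular piece $-r^{p-3}e^{-\sigma_U}\sDelta u\cdot(\partial_t+\partial_r)(ru)$, I would integrate by parts on spheres using $\int\sDelta u\cdot v\,dx = -\int r^2\ang u\cdot\ang v\,dx$, then apply $\ang(ru)=r\ang u$ and the commutator $[\ang,\partial_r]=r^{-1}\ang$ from \eqref{angCommutator} to rewrite $\ang(\partial_t+\partial_r)(ru) = r(\partial_t+\partial_r)\ang u + 2\ang u$ and recognize $\ang u\cdot(\partial_t+\partial_r)\ang u = \frac{1}{2}(\partial_t+\partial_r)|\ang u|^2$. A second integration by parts in $t$ and $r$ (the $\sigma'_U$ contributions cancelling cleanly because $(\partial_t+\partial_r)(r^p e^{-\sigma_U})=pr^{p-1}e^{-\sigma_U}$) yields $\frac{1}{2}\int r^p e^{-\sigma_U}|\ang u|^2\,dx\bigr|_0^T$ and $\frac{2-p}{2}\int r^{p-1}e^{-\sigma_U}|\ang u|^2\,dx\,dt$, completing the $\good(ru)$ norms on the left-hand side since $|\ang(ru)|^2 = r^2|\ang u|^2$.

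The principal obstacle is treating the perturbation without losing a derivative. I split the multiplier as $(\partial_t+\partial_r)(ru^I) = u^I + r(\partial_t+\partial_r)u^I$. For the derivative piece, the symmetry \eqref{hsym} allows one to write $h^{I,\alpha\beta}_J\partial_\alpha\partial_\beta u^J\cdot(\partial_t+\partial_r)u^I = \frac{1}{2}(\partial_t+\partial_r)\bigl[h^{I,\alpha\beta}_J\partial_\alpha u^J\partial_\beta u^I\bigr] - \frac{1}{2}\bigl((\partial_t+\partial_r)h^{I,\alpha\beta}_J\bigr)\partial_\alpha u^J\partial_\beta u^I$; integrating the divergence piece by parts against $r^p e^{-\sigma_U}$ produces the time-boundary $\sup_t\int r^p|h||\partial u|^2\,dx$, the weight-derivative contribution $\int r^p(|(\partial_t+\partial_r)h|+|h|/r)|\partial u|^2\,dx\,dt$, and the ghost remainder $\int r^{p-1}|h|\sigma'_U|\partial u||(\partial_t+\partial_r)(ru)|\,dx\,dt$ from the $\sigma'_U$ left over after the $\partial_t$ and $\partial_r$ contributions to $e^{-\sigma_U}$ are combined with the $u^I$ piece below. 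For the $u^I$ piece, a single integration by parts in $\partial_\alpha$ yields the mixed terms $\int r^{p-1}(|\partial h|+|h|/r)|\partial u||(\partial_t+\partial_r)(ru)|\,dx\,dt$ and $\int r^{p-1}|h||\partial u|(|\ang u|+|u|/r)\,dx\,dt$, where the $|\ang u|$ appears by decomposing spatial derivatives as $\partial_j u = (x_j/r)\partial_r u + (\ang u)_j$ in the residual quadratic $h^{I,\alpha\beta}_J\partial_\alpha u^I\partial_\beta u^J$ and absorbing the radial component against the $r(\partial_t+\partial_r)u$ factor already treated. The hard part throughout is the bookkeeping: one must use \eqref{hsym} consistently at every symmetrization so that no $\partial^2 u$ reappears on the right, and one must track every differentiation of the weight $r^{p-1}e^{-\sigma_U}$ (including the $\sigma'_U$ terms and the $2/r$ spherical-Jacobian correction) so that each remainder is accounted for by one of the six terms in the claimed estimate.
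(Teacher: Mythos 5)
Your treatment of the flat-wave ($h\equiv 0$) part is correct and matches the paper's computation: the radial identity $r\Box u = (\partial_t-\partial_r)(\partial_t+\partial_r)(ru) - r^{-1}\sDelta u$, the coefficient calculation $-\tfrac{1}{2}(\partial_t-\partial_r)w + w/r = \tfrac{p}{2}r^{p-3}e^{-\sigma_U} + r^{p-2}\sigma'_U e^{-\sigma_U}$, the commutation $\ang(\partial_t+\partial_r)(ru) = r(\partial_t+\partial_r)\ang u + 2\ang u$, and the resulting coefficient $\tfrac{2-p}{2}$ on the angular local energy are all exactly what appears in \eqref{nopert}.

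There is, however, a genuine gap in your treatment of the quasilinear perturbation. You assert the pointwise identity
\[
h^{I,\alpha\beta}_J\,\partial_\alpha\partial_\beta u^J\cdot(\partial_t+\partial_r)u^I
= \tfrac{1}{2}(\partial_t+\partial_r)\bigl[h^{I,\alpha\beta}_J\partial_\alpha u^J\partial_\beta u^I\bigr]
- \tfrac{1}{2}\bigl[(\partial_t+\partial_r)h^{I,\alpha\beta}_J\bigr]\partial_\alpha u^J\partial_\beta u^I,
\]
but this is false. The right side, after using the symmetry \eqref{hsym}, equals $h^{I,\alpha\beta}_J\partial_\alpha u^J\,(\partial_t+\partial_r)\partial_\beta u^I$, in which both factors are first-order; the left side has a genuine second derivative $\partial_\alpha\partial_\beta u^J$ paired with a zeroth-order differential of $u^I$. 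A concrete counterexample: take $M=1$, $h^{\alpha\beta}=\delta^{jk}$ (spatial), $u=x_1x_2$. Then the left side is $\Delta u\cdot\partial_r u = 0$, while the right side evaluates to $\partial_j u\,\partial_r\partial_j u = (x_1^2+x_2^2)/r \neq 0$. The symmetry of $h$ alone cannot trade $\partial_\alpha\partial_\beta u^J\cdot v^I$ for $\partial_\alpha u^J\cdot\partial_\beta v^I$; one must first integrate by parts in $\alpha$ to move a derivative from $\partial_\alpha\partial_\beta u^J$ onto the multiplier, and only afterward does the symmetrization (applied to $h\,\partial_\beta u^J\,(\partial_t+\partial_r)\partial_\alpha u^I$) produce the total derivative. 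That initial integration by parts is where the time-boundary term $-\int h^{I,0\beta}_J r^{p-1}e^{-\sigma_U}\partial_\beta u^J(\partial_t+\partial_r)(ru^I)\,dx\big|_0^t$, the $(\partial_\alpha h)$ contribution, the weight derivatives of $r^p e^{-\sigma_U}$, and crucially the commutator term $\tfrac{1}{r}\ang_k u^I$ all arise. The commutator is what actually generates the $|\ang u|$ in the fifth term on the right of \eqref{lep}, not a decomposition of $\partial_j u$ into radial and angular parts as you propose.

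A related, subtler issue: splitting the multiplier as $(\partial_t+\partial_r)(ru^I) = u^I + r(\partial_t+\partial_r)u^I$ and integrating by parts in each piece separately produces weight-derivative remainders of the form $r^{p-1}\sigma'_U|h|\,|\partial u|\,|u|$, which is not pointwise controlled by the sixth term $\int |h|\,r^{p-1}\sigma'_U|\partial u|\,|(\partial_t+\partial_r)(ru)|$ of \eqref{lep} (since $|u| \not\lesssim |(\partial_t+\partial_r)(ru)|$ pointwise). The paper avoids this entirely by keeping the combination $(\partial_t+\partial_r+\tfrac{1}{r})u^I = \tfrac{1}{r}(\partial_t+\partial_r)(ru^I)$ unsplit, so that the derivative of the weight $r^p e^{-\sigma_U}$ always multiplies the good factor $(\partial_t+\partial_r)(ru^I)$. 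Your phrase about the $\sigma'_U$ contributions being ``combined with the $u^I$ piece below'' suggests you anticipate this, but the split as carried out would not close to \eqref{lep} without keeping that combination together.
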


\begin{proof}
  We first note that
\begin{multline*}\int_0^t \int (\Box u)^I r^p e^{-\sigma_U(t-r)} \Bigl(\partial_t
  + \partial_r + \frac{1}{r}\Bigr)u^I\,dx\,dt
\\= \int_0^t \int_0^\infty \int_{\S^2} \Bigl(\partial_t^2-\partial_r^2 -
\ang\cdot\ang\Bigr)(ru)^I\cdot r^p e^{-\sigma_U(t-r)}
\Bigl(\partial_t+\partial_r\Bigr)(ru)^I\,d\omega\,dr\,dt.
\end{multline*}
Using integration by parts, we see that the right side is equal to
\begin{multline*}
\frac{1}{2}  \int_0^t\int_0^\infty \int_{\S^2} r^p
e^{-\sigma_U(t-r)}\Bigl(\partial_t-\partial_r\Bigr)\Bigl|\Bigl(\partial_t+\partial_r\Bigr)(ru)\Bigr|^2\,d\omega\,dr\,dt
\\+ \int_0^t\int_0^\infty\int_{\S^2} r^p e^{-\sigma_U(t-r)} \ang(ru)^I\cdot\ang\Bigl(\partial_t+\partial_r\Bigr)(ru)^I\,d\omega\,dr\,dt.
\end{multline*}
Relying on \eqref{angCommutator}, we
further see that this is the same as
\begin{multline*}
 \frac{1}{2} \int_0^\infty \int_{\S^2} r^pe^{-\sigma_U(t-r)}
  \Bigl|\Bigl(\partial_t+\partial_r\Bigr)(ru)\Bigr|^2\,d\omega\,dr\Bigl|_{t=0}^t
+\frac{p}{2}\int_0^t\int_0^\infty \int_{\S^2} r^{p-1}e^{-\sigma_U(t-r)}
\Bigl|\Bigl(\partial_t+\partial_r\Bigr)(ru)\Bigr|^2\,d\omega\,dr\,dt
\\+\int_0^t\int_0^\infty \int_{\S^2} r^p
\sigma_U'(t-r)e^{-\sigma_U(t-r)}
\Bigl|\Bigl(\partial_t+\partial_r\Bigr)(ru)\Bigr|^2\,d\omega\,dr\,dt
+ \int_0^t\int_0^\infty\int_{\S^2} r^{p-1} e^{-\sigma_U(t-r)}
|\ang(ru)|^2\,d\omega\,dr\,dt
\\+ \frac{1}{2}\int_0^t\int_0^\infty \int_{\S^2} r^p e^{-\sigma_U(t-r)}\Bigl(\partial_t+\partial_r\Bigr)|\ang(ru)|^2\,d\omega\,dr\,dt.
\end{multline*}
A final integration by parts then gives that
\begin{multline}\label{nopert}
 \int_0^t \int (\Box u)^I r^p e^{-\sigma_U(t-r)} \Bigl(\partial_t
  + \partial_r + \frac{1}{r}\Bigr)u^I\,dx\,dt\\=\frac{1}{2} \int_0^\infty \int_{\S^2} r^pe^{-\sigma_U(t-r)}\Bigl(
  \Bigl|\Bigl(\partial_t+\partial_r\Bigr)(ru)\Bigr|^2 + |\ang(ru)|^2\Bigr)\,d\omega\,dr\Bigl|_{t=0}^t
\\+\frac{p}{2}\int_0^t\int_0^\infty \int_{\S^2} r^{p-1}e^{-\sigma_U(t-r)}
\Bigl|\Bigl(\partial_t+\partial_r\Bigr)(ru)\Bigr|^2\,d\omega\,dr\,dt
\\+\Bigl(1-\frac{p}{2}\Bigr) \int_0^t\int_0^\infty\int_{\S^2} r^{p-1} e^{-\sigma_U(t-r)}
|\ang(ru)|^2\,d\omega\,dr\,dt
\\+\frac{1}{2}\int_0^t\int_0^\infty \int_{\S^2} r^p
\sigma_U'(t-r)e^{-\sigma_U(t-r)}
\Bigl|\Bigl(\partial_t+\partial_r\Bigr)(ru)\Bigr|^2\,d\omega\,dr\,dt.
\end{multline}
This string of equalities proves the desired estimate when $h\equiv
0$. 

We now consider the perturbation.  Using integration by parts, we note that
\begin{multline*}
  -\int_0^t\int h^{I,\alpha\beta}_J \partial_\alpha\partial_\beta u^J
  \cdot r^p
  e^{-\sigma_U(t-r)}\Bigl(\partial_t+\partial_r+\frac{1}{r}\Bigr)u^I\,dx\,dt
  = -\int h^{I,0\beta}_J r^{p-1} e^{-\sigma_U(t-r)}\partial_\beta u^J
  \Bigl(\partial_t+\partial_r\Bigr)(ru^I)\,dx\Bigl|_{t=0}^t
\\  +\int_0^t\int (\partial_\alpha h^{I,\alpha\beta}_J) r^{p-1}
  e^{-\sigma_U(t-r)}\partial_\beta
  u^J\Bigl(\partial_t+\partial_r\Bigr)(ru^I)\,dx\,dt
 \\ +\int_0^t\int h^{I,\alpha\beta}_J r^{-1}\partial_\alpha\Bigl(r^p
  e^{-\sigma_U(t-r)}\Bigr)\partial_\beta
  u^J\Bigl(\partial_t+\partial_r\Bigr)(ru^I)\,dx\,dt
  \\+\int_0^t\int h^{I,\alpha\beta}_J r^pe^{-\sigma_U(t-r)}\partial_\beta
  u^J \partial_\alpha\Bigl(\partial_t+\partial_r+\frac{1}{r}\Bigr)u^I\,dx\,dt.
\end{multline*}
Commuting the $\partial_\alpha$ using \eqref{angCommutator} and using the symmetries \eqref{hsym}, we see that
\begin{multline*}
  \int_0^t\int h^{I,\alpha\beta}_J r^pe^{-\sigma_U(t-r)}\partial_\beta
  u^J
  \partial_\alpha\Bigl(\partial_t+\partial_r+\frac{1}{r}\Bigr)u^I\,dx\,dt
  = \int_0^t \int h^{I,k\beta}_J r^{p-1}e^{-\sigma_U(t-r)}
  \partial_\beta u^J \ang_k u^I\,dx\,dt
  \\-\int_0^t\int h^{I,k\beta}_J r^{p-2} e^{-\sigma_U(t-r)}
  \frac{x_k}{r}u^I\partial_\beta u^J\,dx\,dt
  +\frac{1}{2} \int_0^t\int h^{I,\alpha\beta}_J r^p e^{-\sigma_U(t-r)}
  \Bigl(\partial_t+\partial_r+\frac{2}{r}\Bigr)\Bigl[\partial_\beta
  u^J \partial_\alpha u^I\Bigr]\,dx\,dt.
\end{multline*}
Combining the above two identities and integrating by parts, we obtain
\begin{multline}
  \label{withpert}
  -\int_0^t\int h^{I,\alpha\beta}_J \partial_\alpha\partial_\beta u^J
  \cdot r^p
  e^{-\sigma_U(t-r)}\Bigl(\partial_t+\partial_r+\frac{1}{r}\Bigr)u^I\,dx\,dt
 \\ = -\int h^{I,0\beta}_J r^{p-1} e^{-\sigma_U(t-r)}\partial_\beta u^J
  \Bigl(\partial_t+\partial_r\Bigr)(ru^I)\,dx\Bigl|_{t=0}^t
+\frac{1}{2}\int h^{I,\alpha\beta}_J r^p
e^{-\sigma_U(t-r)}\partial_\beta u^J\partial_\alpha u^I\,dx\Bigl|_{t=0}^t
  \\  +\int_0^t\int (\partial_\alpha h^{I,\alpha\beta}_J) r^{p-1}
  e^{-\sigma_U(t-r)}\partial_\beta
  u^J\Bigl(\partial_t+\partial_r\Bigr)(ru^I)\,dx\,dt
\\+p\int_0^t\int h^{I,k\beta}_J \frac{x_k}{r}r^{p-2}
e^{-\sigma_U(t-r)}\partial_\beta u^J
\Bigl(\partial_t+\partial_r\Bigr)(ru^I)\,dx\,dt
+\int_0^t \int h^{I,k\beta}_J r^{p-1}e^{-\sigma_U(t-r)}
  \partial_\beta u^J \ang_k u^I\,dx\,dt
  \\-\int_0^t\int h^{I,k\beta}_J r^{p-2} e^{-\sigma_U(t-r)}
  \frac{x_k}{r}u^I\partial_\beta u^J\,dx\,dt
+\int_0^t\int h^{I,\alpha\beta}_J \omega_\alpha r^{p-1} \sigma_U'(t-r)
e^{-\sigma_U(t-r)} \partial_\beta u^J (\partial_t+\partial_r)(ru^I)\,dx\,dt
  \\-\frac{1}{2}\int_0^t\int
  \Bigl(\partial_t+\partial_r\Bigr)(h^{I,\alpha\beta}_J) r^p
  e^{-\sigma_U(t-r)} \partial_\beta u^J \partial_\alpha u^I\,dx\,dt
  -\frac{p}{2}\int_0^t\int h^{I,\alpha\beta}_J
  r^{p-1}e^{-\sigma_U(t-r)}\partial_\beta u^J \partial_\alpha u^I\,dx\,dt.
\end{multline}
Here $\omega = (-1, x/r)$.
Our estimate \eqref{lep} is an immediate consequence of \eqref{nopert}
and \eqref{withpert} and taking the supremum over $t\in [0,T]$.
\end{proof}

We next consider a Hardy-type inequality to obtain associated bounds
on the solution, which are analogous to the bounds on
$\||x|^{-1}u\|_{LE}$ in \eqref{le_pert}.
\begin{lemma}
Let $0< p<2$.  Suppose $u\in C^1([0,T]\times \R^3)$ and that for
every $t\in [0,T]$, $|r^{p/2} u(t,x)|\to 0$ as $|x|\to \infty$.  Then
\begin{equation}
  \label{lot_R}
  \|r^{\frac{p-3}{2}} u\|_{L^2L^2} + \|r^{\frac{p-2}{2}} u\|_{L^\infty
      L^2}
\lesssim \|r^{\frac{p-2}{2}} u(0,\cd)\|_{L^2} + \|r^{\frac{p-3}{2}} \good(ru)\|_{L^2L^2}.
\end{equation}
\end{lemma}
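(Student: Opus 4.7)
The plan is to substitute $v = ru$ and to perform a weighted integration by parts in the $(t,r)$-plane using the null vector field $\partial_t + \partial_r$. Writing $dx = r^2\,dr\,d\omega$, the squared left-hand side of \eqref{lot_R} becomes, after integration over $\mathbb{S}^2$,
\[
\int_0^T \int_{\mathbb{S}^2} \int_0^\infty r^{p-3} v^2\,dr\,d\omega\,dt + \sup_T \int_{\mathbb{S}^2}\int_0^\infty r^{p-2} v^2\,dr\,d\omega,
\]
so it suffices to work in radial coordinates. The central observation is the identity $r^{p-3} = (p-2)^{-1}(\partial_t+\partial_r)r^{p-2}$, which holds because $r$ is independent of $t$; this expresses the bad weight on the left as a spacetime divergence that can be integrated by parts.

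Next I apply the divergence theorem in the half-strip $[0,T]\times[0,\infty)$ to the vector field $r^{p-2} v^2 (1,1)$. The boundary contribution at $r=0$ vanishes because $r^{p-2} v^2 = r^p u^2 \to 0$, using $p>0$ and continuity of $u$; the contribution at $r = \infty$ vanishes by the decay hypothesis $|r^{p/2}u(t,x)|\to 0$; and the $t=0$ and $t=T$ slices each produce $\int_0^\infty r^{p-2} v^2\,dr$, which after the $\omega$-integration corresponds to $\|r^{(p-2)/2}u(0,\cd)\|_{L^2}^2$ and $\|r^{(p-2)/2}u(T,\cd)\|_{L^2}^2$. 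The remaining interior term from moving $\partial_t+\partial_r$ onto $v^2$ is $2\int r^{p-2} v(\partial_t+\partial_r)v$, which by Cauchy--Schwarz I split as $\varepsilon \int r^{p-3} v^2 + C_\varepsilon \int r^{p-1}\big((\partial_t+\partial_r)v\big)^2$.

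Collecting terms, the coefficient of $\int r^{p-3} v^2$ on the left is $(2-p) - \varepsilon$, which is positive once $\varepsilon$ is chosen small enough, using the upper bound $p < 2$. This lets me absorb the Cauchy--Schwarz error, yielding
\[
\int_0^T\!\int r^{p-3} v^2\,dr\,d\omega\,dt + \int r^{p-2} v^2(T)\,dr\,d\omega \lesssim \int r^{p-2} v^2(0)\,dr\,d\omega + \int_0^T\!\int r^{p-1}\big((\partial_t+\partial_r)v\big)^2\,dr\,d\omega\,dt.
\]
Reverting to $u = v/r$ and estimating $|(\partial_t+\partial_r)(ru)| \le |\good(ru)|$ recovers \eqref{lot_R} for each fixed $T$, and taking the supremum over $T$ on the time-slice term gives the $L^\infty L^2$ bound. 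The only delicate points are the vanishing of the $r=0$ boundary term (which forces $p>0$) and the sign of the absorption constant (which forces $p<2$); together these produce exactly the hypothesized range $0<p<2$.
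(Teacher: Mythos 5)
Your proposal is correct and follows essentially the same argument as the paper: both substitute $v=ru$, write $r^{p-3}$ as $(p-2)^{-1}(\partial_t+\partial_r)r^{p-2}$, integrate by parts over $[0,t]\times[0,\infty)\times\S^2$ (boundary terms at $r=0$ and $r=\infty$ vanishing under the stated hypotheses), and then absorb the resulting cross term via Cauchy--Schwarz/Young before taking the supremum in $t$. The only cosmetic difference is that you phrase the computation as the divergence theorem applied to $r^{p-2}v^2(1,1)$ with a pointwise Young inequality and explicit tracking of the $r=0$ boundary, whereas the paper applies the Schwarz inequality at the integral level and describes the absorption as a bootstrap; these are the same estimate.
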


\begin{proof}
By integrating by parts, for any $t\in [0,T]$, we have
\begin{align*}
\frac{1}{2-p}\int r^{p-2} u^2(t,x)\,dx &+  \int_0^t
  \int r^{p-3} u^2(\tau,x)\,dx\,d\tau\\
&= \frac{1}{2-p}\int r^{p-2} u^2(t,x)\,dx + \frac{1}{p-2} \int_0^t\int_{\S^2}
  \int_0^\infty (\partial_\tau+\partial_r)(r^{p-2})
  (ru)^2\,dr \,d\omega\,d\tau\\
&=\frac{1}{2-p}\int r^{p-2} u^2(0,x)\,dx -
  \frac{2}{p-2}\int_0^t\int_{\S^2}\int_0^\infty r^{p-2} (ru) (\partial_t+\partial_r)(ru)\,dr\,d\omega\,d\tau.
\end{align*}
As the Schwarz inequality allows us to bound the last term by
\[C \Bigl(\int_0^t\int r^{p-3}
  u^2\,dx\,d\tau\Bigr)^{1/2}\Bigl(\int_0^t\int
  r^{p-3} \Bigl[(\partial_t+\partial_r)(ru)\Bigr]^2\,dx\,d\tau\Bigr)^{1/2}\]
and as the first factor can be bootstrapped, \eqref{lot_R} follows
upon taking a supremum in $t\in [0,T]$.
\end{proof}

While we will not directly use the next lemma, which indicates the
form of the lower order bound with decay in $t-r$ and $0<p<1$,
we include it for the sake of completeness.
\begin{lemma}  Let $0\le p<1$.
Suppose $u\in C^1([0,T]\times\R^3)$ and that for every $t\in [0,T]$,
$|u(t,x)|\to 0$ as $|x|\to \infty$.   Then
  \begin{multline}
    \label{lot_U_p<1}
   \int_0^T\int r^{p-2} \sigma_U'(t-r) e^{-\sigma_U(t-r)} u^2\,dx\,dt +
   \sup_{t\in [0,T]} \int r^{p-1}\sigma_U'(t-r)e^{-\sigma_U(t-r)}
   u^2(t,x)\,dx
\\\lesssim \int r^{p-1}\sigma_U'(-r) e^{-\sigma_U(-r)} u^2(0,x)\,dx
+ \int_0^T\int r^{p-2} \sigma_U'(t-r)e^{-\sigma_U(t-r)} \Bigl[(\partial_t+\partial_r)(ru)\Bigr]^2\,dx\,dt.
  \end{multline}
\end{lemma}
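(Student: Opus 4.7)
The plan is to mimic the derivation of the earlier Hardy-type identity underlying \eqref{lot_R}, exploiting the critical fact that $(\partial_t+\partial_r)[\sigma_U'(t-r) e^{-\sigma_U(t-r)}] = 0$, which follows from $(\partial_t+\partial_r)(t-r)=0$. Writing $a(t,r) = \sigma_U'(t-r) e^{-\sigma_U(t-r)}$, I would start from the pointwise identity
\[
(\partial_t+\partial_r)\bigl(r^{p-1} a\, (ru)^2\bigr) = (p-1)\, r^{p-2} a\, (ru)^2 + 2\, r^{p-1} a\, (ru)\,(\partial_t+\partial_r)(ru),
\]
which uses $\partial_r(r^{p-1}) = (p-1)r^{p-2}$ together with the annihilation of $a$ by $\partial_t+\partial_r$. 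Integrating this over $\S^2\times (0,\infty)\times [0,t]$ against $d\omega\,dr\,d\tau$, applying the fundamental theorem of calculus in both $\tau$ and $r$, and converting back to $dx = r^2\,dr\,d\omega$ produces the identity
\begin{multline*}
\int r^{p-1}a\, u^2(t,\cd)\,dx + (1-p)\int_0^t\!\!\int r^{p-2} a\,u^2\,dx\,d\tau
\\
= \int r^{p-1} a(0,\cd)\, u^2(0,\cd)\,dx + 2\int_0^t\!\!\int r^{p-2} a\, u\,(\partial_\tau+\partial_r)(ru)\,dx\,d\tau,
\end{multline*}
provided the boundary contributions at $r=0$ and $r=\infty$ vanish. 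At $r=0$ the integrand is $r^{p+1} a u^2$, which is $o(1)$ for $p\ge 0$ since $u$ is continuous; at $r=\infty$, the explicit form $\sigma_U'(z)=U/(U+|z|)^2$ shows the integrand behaves like $Ur^{p-1} u^2$, which tends to zero thanks to $u(t,\cd)\to 0$ and $p<1$.

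Since $p<1$, the coefficient $1-p$ is strictly positive, so the second term on the left has the correct sign. For the mixed term on the right I would apply Cauchy--Schwarz with the symmetric splitting $r^{p-2} = r^{(p-2)/2}\cdot r^{(p-2)/2}$, obtaining the bound
\[
2\Bigl(\int_0^t\!\!\int r^{p-2}a\, u^2\,dx\,d\tau\Bigr)^{1/2}\Bigl(\int_0^t\!\!\int r^{p-2}a\,\bigl[(\partial_\tau+\partial_r)(ru)\bigr]^2\,dx\,d\tau\Bigr)^{1/2}.
\]
Young's inequality then absorbs the first factor into the $(1-p)$ term on the left, while enlarging the $\tau$-integration to $[0,T]$ in the second factor yields the right-hand side of \eqref{lot_U_p<1}. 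Finally, taking the supremum over $t\in[0,T]$ in the remaining term on the left gives the claim.

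The main delicacy, and the sole place where the hypothesis $p<1$ is essential, lies in the sign of $(\partial_t+\partial_r)(r^{p-1}) = (p-1)r^{p-2}$: it is precisely this sign that causes the weighted spacetime $L^2$ norm of $u$ to appear with a positive coefficient on the left side of the identity, so that the Cauchy--Schwarz error term can be absorbed. For $p\ge 1$ this argument collapses, and one would instead need to extract positivity from the pointwise factor $\sigma_U'(t-r)$ generated by a $(\partial_t-\partial_r)$-type multiplier, leading to a structurally different inequality.
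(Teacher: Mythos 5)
Your proof is correct and follows essentially the same route as the paper: the same identity obtained from $(\partial_t+\partial_r)[r^{p-1}\sigma_U'(t-r)e^{-\sigma_U(t-r)}] = (p-1)r^{p-2}\sigma_U'(t-r)e^{-\sigma_U(t-r)}$, followed by the same Cauchy--Schwarz/absorption argument, and multiplying the paper's identity by $1-p$ gives yours verbatim. Your added remarks about the vanishing of the boundary terms at $r=0$ and $r=\infty$ and the precise role of $p<1$ are accurate and a bit more explicit than what the paper records.
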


\begin{proof}
 We argue similarly to the preceding lemma and apply integration by parts and
  the Schwarz inequality to observe that
  \begin{align*}
    \int_0^t&\int r^{p-2} \sigma_U'(\tau-r) e^{-\sigma_U(\tau-r)} u^2\,dx\,d\tau +
  \frac{1}{1-p} \int r^{p-1}\sigma_U'(t-r)e^{-\sigma_U(t-r)}u^2(t,x)\,dx\\
&= \frac{1}{p-1}\int_0^t\int_{\S^2}\int_0^\infty (\partial_\tau+\partial_r)[r^{p-1}
  \sigma_U'(\tau-r) e^{-\sigma_U(\tau-r)}] (ru)^2\,dr\,d\omega\,d\tau
    \\&\qquad\qquad\qquad\qquad\qquad\qquad\qquad\qquad\qquad\qquad\qquad\qquad+ 
  \frac{1}{1-p} \int r^{p-1}\sigma_U'(t-r)e^{-\sigma_U(t-r)}u^2(t,x)\,dx\\
&=\frac{1}{1-p}\int r^{p-1}\sigma_U'(-r)e^{-\sigma_U(-r)}u^2(0,x)\,dx
\\&\qquad\qquad\qquad\qquad\qquad\qquad-\frac{2}{p-1}\int_0^t\int_{\S^2}\int_0^\infty r^{p-1}\sigma_U'(\tau-r)e^{-\sigma_U(\tau-r)}
  ru(\partial_t+\partial_r)(ru)\,dr\,d\omega\,d\tau\\
&\lesssim \int r^{p-1}\sigma_U'(-r)e^{-\sigma_U(-r)}u^2(0,x)\,dx \\&\quad+ 
\Bigl(\int_0^t\int r^{p-2} \sigma_U'(\tau-r) e^{-\sigma_U(\tau-r)}
  u^2\,dx\,d\tau\Bigr)^{\frac{1}{2}}
\Bigl(\int_0^t\int r^{p-2} \sigma_U'(\tau-r)e^{-\sigma_U(\tau-r)} \Bigl[(\partial_t+\partial_r)(ru)\Bigr]^2\,dx\,d\tau \Bigr)^{\frac{1}{2}}.
  \end{align*}
We may then bootstrap the first factor of the last term and take a
supremum in $t$ to complete the argument.
\end{proof}

We shall need the analog of the above when $p=1$, which comes with a
logarithmic loss.  It is this logarithm that is largely responsible
for the difference between \eqref{lifespan} and the
$\exp(c/\varepsilon)$ lifespan of \cite{Lindblad}.
\begin{lemma} \label{lemma.Hardy} Suppose $u\in C^1([0,T]\times\R^3)$
  and that for every $t\in [0,T]$, 
$|u(t,x)|\to 0$ as $|x|\to \infty$.   Then
  \begin{multline}
    \label{lot_U_p=1}
    \int_0^T\int \beta_{>2}(r) \frac{1}{r (\log \la r\ra)^2} 
    \sigma_U'(t-r)e^{-\sigma_U(t-r)} u^2\,dx\,dt \\+ \sup_{t\in [0,T]} \int
   \beta_{>2}(r) \frac{1}{\log \la r\ra} \sigma_U'(t-r)e^{-\sigma_U(t-r)}
    u^2(t,x)\,dx
\\\lesssim \int \frac{1}{\log \la r\ra} \sigma_U'(-r)e^{-\sigma_U(-r)}
    u^2(0,x)\,dx
\\+ \int_0^T\int r^{-1} \sigma_U'(t-r) e^{-\sigma_U(t-r)}
\Bigl[\Bigl(\partial_t+\partial_r\Bigr)(ru)\Bigr]^2\,dx\,dt
+\|u\|^2_{LE^1}.
  \end{multline}
\end{lemma}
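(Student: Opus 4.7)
The plan is to mimic the proof of \eqref{lot_U_p<1} but at the critical exponent $p=1$, where the natural weight $r^{p-1}$ is constant and must be replaced by a slowly varying surrogate whose $r$-derivative still reproduces the target density. I set $\phi(r) := \beta_{>2}(r)/\log\la r\ra$, which satisfies
$$\phi'(r) = -\frac{\beta_{>2}(r)}{r(\log\la r\ra)^2} + E(r),$$
where $E(r)$ is compactly supported in $r \in [2,4]$ (coming from $\beta_{>2}'$) together with a piece of size $r^{-2}$ at large $r$ (coming from $\la r\ra'/\la r\ra - 1/r$); both contributions will be controlled by $\|u\|_{LE^1}^2$. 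Since $\sigma_U'(\tau-r)e^{-\sigma_U(\tau-r)}$ is annihilated by $\partial_\tau + \partial_r$, this gives the key identity
$$(\partial_\tau + \partial_r)\bigl[\phi(r)\sigma_U'(\tau-r)e^{-\sigma_U(\tau-r)}\bigr] = \phi'(r)\sigma_U'(\tau-r)e^{-\sigma_U(\tau-r)}.$$

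Passing to spherical coordinates, so that $u^2\,dx = (ru)^2\,dr\,d\omega$, I rewrite the first left-hand term of \eqref{lot_U_p=1} as $-\int(\partial_\tau + \partial_r)[\phi\,\sigma_U' e^{-\sigma_U}](ru)^2\,dr\,d\omega\,d\tau$ plus the harmless $E$-contribution, and integrate by parts in $(\tau, r)$ exactly as in the preceding lemma. The $\tau = t$ endpoint produces $\int \phi(r)\sigma_U' e^{-\sigma_U}u^2(t,x)\,dx$, which matches the second left-hand term of \eqref{lot_U_p=1}; the $\tau = 0$ endpoint matches the first right-hand term. The boundary at $r=0$ vanishes because $\phi(0) = 0$, and the boundary at $r = \infty$ vanishes because $\sigma_U'(t-r) = U/(U+|t-r|)^2 \to 0$ combined with $|u(t,\cdot)| \to 0$ forces $\phi(r)r^2\sigma_U' e^{-\sigma_U} u^2 \to 0$.

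The remaining bulk term is $2\int_0^t\int \phi(r)\sigma_U' e^{-\sigma_U}(u/r)(\partial_\tau + \partial_r)(ru)\,dx\,d\tau$. Since $\phi(r)^2 \le \beta_{>2}(r)/(\log\la r\ra)^2$, Cauchy--Schwarz bounds this by
$$\varepsilon\int_0^t\!\!\int \frac{\beta_{>2}(r)\,\sigma_U' e^{-\sigma_U}\, u^2}{r(\log\la r\ra)^2}\,dx\,d\tau + \frac{C}{\varepsilon}\int_0^t\!\!\int \frac{\sigma_U' e^{-\sigma_U}\,[(\partial_\tau + \partial_r)(ru)]^2}{r}\,dx\,d\tau,$$
and for small $\varepsilon$ the first factor is absorbed into the left-hand side via bootstrap. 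The $E$-error $\int E(r)\sigma_U' e^{-\sigma_U} u^2\,dx\,d\tau$ is either supported in $r \in [2,4]$ (and hence dominated by $\int_{r\in[2,4]} u^2\,dx\,d\tau$) or is bounded pointwise by $u^2/r^2$; in both cases it is controlled by $\||x|^{-1}u\|_{LE}^2 \le \|u\|_{LE^1}^2$. Taking the supremum over $t \in [0,T]$ then yields \eqref{lot_U_p=1}.

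The main obstacle is precisely the criticality at $p = 1$: no positive power of $r$ is available to differentiate under $\partial_\tau + \partial_r$, so the surrogate $\phi \sim 1/\log r$ is essentially forced, and the $1/\log\la r\ra$ weight it introduces into the pointwise-in-$t$ term of \eqref{lot_U_p=1} is the direct source of the logarithmic loss that downgrades the lifespan from the $\exp(c/\varepsilon)$ of \cite{Lindblad} to the $\exp(c/\varepsilon^{1/3})$ of Theorem \ref{main_thm}.
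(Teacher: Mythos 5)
Your proof is correct and follows essentially the same route as the paper: the surrogate weight $\phi(r)=\beta_{>2}(r)/\log\la r\ra$, the annihilation of $\sigma_U'(\tau-r)e^{-\sigma_U(\tau-r)}$ by $\partial_\tau+\partial_r$, integration by parts along the null direction, Cauchy--Schwarz with bootstrap, and absorption of the cutoff/low-$r$ error into $\|u\|_{LE^1}^2$ are all exactly the paper's moves. The only cosmetic difference is that the paper keeps $\beta_{>2}(r)$ as an external factor (working with $-1/\log r$ directly) while you fold it into $\phi$ and therefore track a slightly larger error term $E(r)$; both land in the same place.
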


\begin{proof}
 We observe that 
\begin{align*} \int_0^t\int &\beta_{>2}(r) \frac{1}{r(\log r)^2}
    \sigma_U'(\tau-r)e^{-\sigma_U(\tau-r)} u^2\,dx\,d\tau + \int \beta_{>2}(r)
    \frac{1}{\log r} \sigma_U'(t-r)e^{-\sigma_U(t-r)}
    u^2(t,x)\,dx
\\&= \int_0^t\int_{\S^2}\int_0^\infty \beta_{>2}(r)\Bigl(\partial_\tau+\partial_r\Bigr)\Bigl[-\frac{1}{\log r}
    \sigma_U'(\tau-r)e^{-\sigma_U(\tau-r)}\Bigr] (r u)^2\,dr\,d\omega\,d\tau
  \\&\qquad\qquad\qquad\qquad\qquad\qquad\qquad\qquad  + \int \beta_{>2}(r)
    \frac{1}{\log r} \sigma_U'(t-r)e^{-\sigma_U(t-r)}
    u^2(t,x)\,dx
\\&=2 \int_0^t\int_{\S^2}\int_0^\infty \beta_{>2}(r) \frac{1}{\log r}
    \sigma_U'(\tau-r)e^{-\sigma_U(\tau-r)} (r u)
    \Bigl(\partial_t+\partial_r\Bigr)(ru)\,dr\,d\omega\,d\tau 
\\&\qquad\qquad+ \int
    \beta_{>2}(r) \frac{1}{\log r} \sigma_U'(-r)e^{-\sigma_U(-r)}
    u^2(0,x)\,dx
+\int_0^t\int \beta_{>2}'(r) \frac{1}{\log r}\sigma_U'(\tau-r)
    e^{-\sigma_U(\tau-r)} u^2\,dx\,d\tau.
\end{align*}
As $\text{supp}\,\beta'_{>2}\subset [2,4]$, the last term is bounded by $\|u\|_{LE^1}^2$.
The Schwarz inequality shows that the first term in the right side is
\begin{multline*}\lesssim \Bigl(\int_0^t\int \beta_{>2}(r) \frac{1}{r(\log r)^2}
  \sigma_U'(\tau-r) e^{-\sigma_U(\tau-r)} u^2\,dx\,d\tau\Bigr)^{1/2}\\\times
  \Bigl(\int_0^t\int \beta_{>2}(r) r^{-1} \sigma_U'(\tau-r) e^{-\sigma_U(\tau-r)}
  \Big[\Bigl(\partial_t+\partial_r\Bigr)(ru)\Bigr]^2\,dx\,d\tau\Bigr)^{1/2}.
\end{multline*}
The first factor may be bootstrapped, and upon taking a supremum over
$t\in [0,T]$, the proof is complete. 
\end{proof}

The following corollary combines \eqref{lep}, \eqref{lot_R}, and
\eqref{lot_U_p=1} when $p=1$ with \eqref{commutators} and provides the primary
linear estimate that our proof is based upon.
\begin{corollary}  Fix $N\in \mathbb{N}$.
 Suppose that $h$ satisfies \eqref{hsym},
 that $u\in C^2([0,T]\times \R^3)$, and that for
  all $t\in [0,T]$, $|r^{\frac{p+2}{2}}\partial^{\le 1}Z^{\le N} u(t,x)|\to 0$ as $|x|\to
  \infty$.  Then, 
  \begin{multline}
    \label{newLE}
   \|\la r\ra^{\frac{1}{2}} \good Z^{\le N} u\|^2_{L^\infty L^2} +
   \| r^{-\frac{1}{2}}
   Z^{\le N} u\|_{L^\infty L^2}^2
+ \|\good
  Z^{\le N} u\|^2_{L^2L^2} 
 + \|r^{-1} Z^{\le N} u\|_{L^2L^2}^2
\\+ \sup_{U\ge 1} \Bigl(\|r^{-\frac{1}{2}} \la
t-r\ra^{-\frac{1}{2}}(\partial_t+\partial_r)(rZ^{\le N} u)\|^2_{L^2L^2(X_U)}
+ \|r^{-\frac{1}{2}} (\log \la r\ra)^{-1} \la t-r\ra^{-\frac{1}{2}}
Z^{\le N}u\|^2_{L^2L^2(X_U)}\Bigr)
\\
 \lesssim \|r^{-\frac{1}{2}}(r\good)^{\le 1} Z^{\le N} u(0,\cd)\|^2_{L^2}   
+ \sup_{t\in [0,T]} \int r |h| |\partial Z^{\le N} u|\Bigl(|\partial
Z^{\le N} u| + \frac{|Z^{\le N} u|}{r}\Bigr)\,dx
\\+ \sup_{U\ge 1} \sup_{t\in [0,T]} \Bigl|\int_0^t \int e^{-\sigma_U(t-r)} \Box_h
  Z^{\le N} u\cdot \Bigl(\partial_t + \partial_r\Bigr)(r Z^{\le N} u)\,dx\,dt\Bigr|
  \\  +\int_0^T\int\Bigl(|\partial h|+\frac{|h|}{r}\Bigr)
    |\partial Z^{\le N} u| |(\partial_t+\partial_r)(r Z^{\le N}u)|\,dx\,dt
+ \int_0^T\int |h| |\partial Z^{\le N} u|\Bigl(|\ang Z^{\le N}
u|+\frac{|Z^{\le N}u|}{r}\Bigr)\,dx\,dt
\\+\int_0^T\int |h|\la t-r\ra^{-1} |\partial Z^{\le N} u|
|(\partial_t+\partial_r)(rZ^{\le N}u)|\,dx\,dt
+\int_0^T\int r \Bigl(|(\partial_t+\partial_r)h|+\frac{|h|}{r}\Bigr)
|\partial Z^{\le N}u|^2\,dx\,dt
\\+\|\partial Z^{\le N}u\|^2_{L^\infty L^2} + \|Z^{\le N}u\|^2_{LE^1}.
  \end{multline}
\end{corollary}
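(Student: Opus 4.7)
The plan is to apply \eqref{lep}, \eqref{lot_R}, and \eqref{lot_U_p=1}, all with $p=1$ and with $u$ replaced by $Z^{\le N}u$, and to combine the resulting inequalities. No commutation of $Z$ with $\Box_h$ is required at this stage, since \eqref{newLE} treats $\Box_h Z^{\le N}u$ directly as inhomogeneity; by \eqref{commutators} this form is natural for later applications, where the $Z$'s will be pushed onto the nonlinearity.

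Applying \eqref{lep} with $p=1$ to $Z^{\le N}u$ directly controls $\|r^{-1/2}\good(rZ^{\le N}u)\|^2_{L^\infty L^2}$, $\|r^{-1}\good(rZ^{\le N}u)\|^2_{L^2L^2}$, and, for any $U>0$, $\|r^{-1/2}(\sigma_U'(t-r))^{1/2}(\partial_t+\partial_r)(rZ^{\le N}u)\|^2_{L^2L^2}$ by the right-hand side of \eqref{newLE}. Using the identities
\[(\partial_t+\partial_r)(rv) = r(\partial_t+\partial_r)v + v, \qquad \ang(rv) = r\ang v,\]
together with the triangle inequality, the first two of these translate, up to lower-order $r^{-1/2}Z^{\le N}u$ and $r^{-1}Z^{\le N}u$ terms, into the $\|\la r\ra^{1/2}\good Z^{\le N}u\|_{L^\infty L^2}^2$ and $\|\good Z^{\le N}u\|_{L^2L^2}^2$ norms on the left of \eqref{newLE}. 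The inner region $r\lesssim 1$, where $\la r\ra^{1/2}\approx 1$, is absorbed by the $\|\partial Z^{\le N}u\|_{L^\infty L^2}^2$ buffer on the right. The lower-order contributions are then recovered by applying \eqref{lot_R} with $p=1$ to $Z^{\le N}u$, which bounds $\|r^{-1}Z^{\le N}u\|_{L^2L^2}$ and $\|r^{-1/2}Z^{\le N}u\|_{L^\infty L^2}$ by initial data and $\|r^{-1}\good(rZ^{\le N}u)\|_{L^2L^2}$, the latter already controlled above.

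To obtain the $X_U$-localized terms of \eqref{newLE}, observe that on $X_U$ one has $\sigma_U'(t-r)\approx U^{-1}\approx \la t-r\ra^{-1}$ and $e^{-\sigma_U(t-r)}\approx 1$, so restricting the ghost-weighted output of \eqref{lep} to $X_U$ produces the bound on $\|r^{-1/2}\la t-r\ra^{-1/2}(\partial_t+\partial_r)(rZ^{\le N}u)\|_{L^2L^2(X_U)}$. Applying \eqref{lot_U_p=1} to $Z^{\le N}u$, with the same ghost-weight equivalence on $X_U$ and the $r\lesssim 1$ portion absorbed into $\|Z^{\le N}u\|_{LE^1}^2$, yields the logarithmically-weighted $L^2L^2(X_U)$ bound on $Z^{\le N}u$ itself. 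Taking a supremum over dyadic $U\ge 1$ then completes the left-hand side of \eqref{newLE}.

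The main burden is purely bookkeeping: one verifies that each inhomogeneous term on the right-hand sides of \eqref{lep}, \eqref{lot_R}, and \eqref{lot_U_p=1}, at $p=1$, is pointwise dominated by a term appearing on the right of \eqref{newLE}, and that the initial-data sides assemble into $\|r^{-1/2}(r\good)^{\le 1}Z^{\le N}u(0,\cd)\|_{L^2}^2$. The one delicate feature is the logarithmic loss inherent in \eqref{lot_U_p=1}, which arises because $p=1$ is the endpoint exponent for the Hardy inequality \eqref{lot_R}: it is precisely this loss that forces the $(\log\la r\ra)^{-1}$ weight on the left of \eqref{newLE} and, downstream, is responsible for the $\varepsilon^{-1/3}$ exponent in the lifespan \eqref{lifespan} in place of the $\varepsilon^{-1}$ exponent of \cite{Lindblad}.
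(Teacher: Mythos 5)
Your proposal is correct and follows essentially the same route as the paper: apply \eqref{lep}, \eqref{lot_R}, and \eqref{lot_U_p=1} at $p=1$ to $Z^{\le N}u$, use $(\partial_t+\partial_r)(rv)=r(\partial_t+\partial_r)v+v$ and $\ang(rv)=r\ang v$ to rewrite $\good(rZ^{\le N}u)$ in terms of $\good Z^{\le N}u$ plus lower-order pieces absorbed by the Hardy bound, treat the inner region $r\lesssim 1$ via the $\|\partial Z^{\le N}u\|^2_{L^\infty L^2}+\|Z^{\le N}u\|^2_{LE^1}$ buffers, and finally use $\sigma_U'(t-r)\gtrsim\la t-r\ra^{-1}$ on $X_U$ together with $\sigma_U'(t-r)\lesssim\la t-r\ra^{-1}$ for $U\ge 1$ (the latter needed globally to convert the sixth right-hand term of \eqref{lep}) before taking the supremum over dyadic $U$. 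The paper records the intermediate inequality as \eqref{lep2} and then makes exactly these $\sigma_U'$ comparisons.
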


\begin{proof}
Using \eqref{lot_R} and \eqref{lot_U_p=1}, we may adapt \eqref{lep}
with $p=1$ to
the bound
\begin{multline}
    \label{lep2}
    \|\la r\ra^{\frac{1}{2}} \good Z^{\le N} u\|^2_{L^\infty L^2} + \|r^{-\frac{1}{2}}
    Z^{\le N} u\|^2_{L^\infty L^2}
    + \|\good Z^{\le N} u\|^2_{L^2L^2} + \| r^{-1} Z^{\le N} u\|^2_{L^2L^2}
\\+ \|r^{-\frac{1}{2}} (\sigma'_U(t-r))^{\frac{1}{2}}
(\partial_t+\partial_r)(rZ^{\le N} u) \|^2_{L^2L^2}
+ \|r^{-\frac{1}{2}}(\log(\la r\ra)^{-1}
(\sigma_U'(t-r))^{\frac{1}{2}} Z^{\le N} u\|^2_{L^2L^2}
\\    \lesssim \|r^{-\frac{1}{2}}(r\good)^{\le 1} Z^{\le N} u(0,\cd)\|^2_{L^2}   
+ \sup_{t\in [0,T]} \int r |h| |\partial Z^{\le N} u|\Bigl(|\partial
Z^{\le N} u| + \frac{|Z^{\le N} u|}{r}\Bigr)\,dx
\\+ \sup_{t\in [0,T]} \Bigl|\int_0^t \int e^{-\sigma_U(t-r)} \Box_h
  Z^{\le N} u\cdot \Bigl(\partial_t + \partial_r\Bigr)(r Z^{\le N} u)\,dx\,dt\Bigr|
  \\  +\int_0^T\int\Bigl(|\partial h|+\frac{|h|}{r}\Bigr)
    |\partial Z^{\le N} u| |(\partial_t+\partial_r)(r Z^{\le N}u)|\,dx\,dt
+ \int_0^T\int |h| |\partial Z^{\le N} u|\Bigl(|\ang Z^{\le N}
u|+\frac{|Z^{\le N}u|}{r}\Bigr)\,dx\,dt
\\+\int_0^T\int |h| \sigma_U'(t-r) |\partial Z^{\le N} u|
|(\partial_t+\partial_r)(rZ^{\le N}u)|\,dx\,dt
+\int_0^T\int r \Bigl(|(\partial_t+\partial_r)h|+\frac{|h|}{r}\Bigr)
|\partial Z^{\le N}u|^2\,dx\,dt
\\+\|\partial Z^{\le N}u\|^2_{L^\infty L^2} + \|Z^{\le N}u\|^2_{LE^1}.
  \end{multline}
We note that
\[\sigma'_U(t-r)\gtrsim \frac{1}{\la t-r\ra} \text{ on }
  X_U,\quad\text{ and }\quad
\sigma'_U(t-r)\lesssim \frac{1}{\la t-r\ra} \text{ provided }
  U\ge 1.\]
Using these facts in \eqref{lep2} and subsequently taking a supremum
in $U$ yields \eqref{newLE}.
\end{proof}

This last Hardy-type inequality is not strictly necessary.  When we
set up an iteration to solve \eqref{main_equation}, this will be a
convenience when showing that the sequence converges.  In particular,
it will allow us to focus only on energy and integrated local energy
spaces for this portion of the argument.  A closely related
calculation appears in \cite{Lindblad}.

\begin{lemma}
  Suppose that $u\in C^1([0,T]\times \R^3)$ is supported where $\{r\le
  t+2\}$.  Then
  \begin{equation}
    \label{lastHardy}
   \int \frac{1}{(1+r)(t-r+3)^2} u^2\,dx \lesssim \int
   \frac{1}{(1+r)r^2}u^2\,dx + \int \frac{1}{(1+r)}(\partial_r u)^2\,dx.
  \end{equation}
\end{lemma}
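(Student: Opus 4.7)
My plan is to follow the same pattern as the earlier Hardy-type estimates of this section (cf.\ the proofs of \eqref{lot_R} and \eqref{lot_U_p=1}): radial integration by parts, followed by Cauchy--Schwarz and a bootstrap. The crucial observation is the identity
$$\frac{1}{(t-r+3)^2}=\partial_r\Bigl(\frac{1}{t-r+3}\Bigr),$$
which expresses the bad weight on the left side of \eqref{lastHardy} as an $r$-derivative.

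I would pass to spherical coordinates and, starting from the rewrite $\int_0^\infty \frac{r^2 u^2}{1+r}\,\partial_r\bigl(\frac{1}{t-r+3}\bigr)\,dr$, integrate by parts in $r\in[0,\infty)$ for each fixed $\omega\in\S^2$. The boundary terms vanish for two separate reasons: at $r=0$ the factor $r^2$ does the job, while at $r=\infty$ the support assumption $\mathrm{supp}\,u\subset\{r\le t+2\}$ together with the continuity of $u$ forces $u=0$ at (and beyond) $r=t+2$. Integration by parts produces two pieces: one in which $u^2$ is weighted by $\partial_r\bigl(\frac{r^2}{1+r}\bigr)\cdot\frac{1}{t-r+3}\ge 0$, which upon rearranging joins the left-hand side as a nonnegative contribution I would simply discard, and a cross term in $u\,\partial_r u$ against the milder weight $\frac{1}{(1+r)(t-r+3)}$.

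For this cross term I would apply Cauchy--Schwarz via the splitting
$$\frac{u\,\partial_r u}{(1+r)(t-r+3)} \;=\; \frac{u}{(1+r)^{1/2}(t-r+3)}\cdot\frac{\partial_r u}{(1+r)^{1/2}},$$
so one $L^2$ factor reproduces $\bigl(\int\tfrac{u^2}{(1+r)(t-r+3)^2}\,dx\bigr)^{1/2}$ and the other is precisely $\bigl(\int\tfrac{(\partial_r u)^2}{1+r}\,dx\bigr)^{1/2}$. Bootstrapping---justified by the a priori finiteness of the left-hand side, which comes from the compact support of $u$---then yields \eqref{lastHardy}. I do not expect any genuine obstacle: this argument in fact gives the strictly stronger conclusion \emph{without} the first term on the right of \eqref{lastHardy}, so that term is only included for convenience downstream. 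The only small point requiring care is the boundary vanishing at $r=t+2$, which is immediate from the support hypothesis and continuity of $u$.
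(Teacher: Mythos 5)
Your proposal is correct and follows the paper's argument step for step: the same rewrite of the weight as $\partial_r\bigl[(t-r+3)^{-1}\bigr]$, the same radial integration by parts, the same Cauchy--Schwarz applied to the $u\,\partial_r u$ cross term, and the same bootstrap. Your one refinement is accurate: the piece in which $\partial_r$ hits $r^2/(1+r)$ has a favorable sign and can be dropped, whereas the paper bounds it by Cauchy--Schwarz (which is why the first integral on the right of \eqref{lastHardy} appears), so the inequality indeed holds without that term.
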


\begin{proof}
  For $t\in [0,T]$ fixed, we integrate by parts and apply the Schwarz
  inequality to obtain
  \begin{align*}
    \int \frac{1}{(1+r)(t-r+3)^2}& u^2\,dx=
                                            \int_{\S^2}\int_0^\infty \partial_r[(t-r+3)^{-1}]
                                            \frac{r^2}{(1+r)} u^2\,dr\,d\omega\\
&= - \int \frac{1}{t-r+3}\cdot\frac{2+r}{r(1+r)^2} u^2\,dx - 2\int \frac{1}{(1+r)(t-r+3)}
  u\,\partial_r u\,dx\\
&\lesssim \Bigl(\int \frac{1}{(1+r)(t-r+3)^2}
  u^2\,dx\Bigl)^{\frac{1}{2}}
\Bigl[\Bigl(\int \frac{1}{r^2(1+r)} u^2\,dx\Bigr)^{\frac{1}{2}} +
  \Bigl(\int \frac{1}{(1+r)} (\partial_r u)^2\,dx\Bigr)^{\frac{1}{2}}\Bigr].
  \end{align*}
Dividing both sides by the first factor in the right completes the proof.
\end{proof}


\section{Sobolev estimates}
The main decay estimate that we shall rely upon is a space-time variant of the
Klainerman-Sobolev estimate \cite{klainermanSob} that was established in \cite{MTT}
and is particularly well-adapted to integrated local energy estimates.

As is described in Section~\ref{notation}, we will break space-time up into $C^R_\tau$ and $C^U_\tau$ regions
where $\tau\in [0,T]$ and $1\le R,U\le \tau/4$.
On these regions, we have the following weighted Sobolev estimates,
which will serve as our source of decay.
\begin{lemma}\label{lemma_spacetime_KS}
  For any $\tau \ge 1$ and $1\le R, U\le \tau/4$, we have
  \begin{align}
    \label{crt}
 \|w\|_{L^\infty L^\infty (C^R_\tau)} &\lesssim
    \frac{1}{\tau^{\frac{1}{2}}R^{\frac{3}{2}}} 
    \|Z^{\le 4} w\|_{L^2L^2(\tilde{C}^R_\tau)} +
    \frac{1}{\tau^{\frac{1}{2}}R^{\frac{1}{2}}}
    \|(\partial_t+\partial_r) Z^{\le 3} w\|_{L^2L^2(\tilde{C}^R_\tau)},\\
\label{cut}  \|w\|_{L^\infty L^\infty(C_\tau^U)}&\lesssim
                                                  \frac{1}{\tau^{\frac{3}{2}}
                                                  U^{\frac{1}{2}}}\|Z^{\le
                                                  4}w\|_{L^2L^2(\tilde{C}^U_\tau)}
                                                  +
                                                  \frac{1}{\tau^{\frac{3}{2}}
                                                  U^{\frac{1}{2}}}
                                                  \|(\partial_t+\partial_r)
                                                  (rZ^{\le 3}
                                                  w)\|_{L^2L^2(\tilde{C}^U_\tau)},\\
    \label{ctt} \|w\|_{L^\infty L^\infty (C^{\frac{\tau}{2}}_\tau)} &\lesssim
    \frac{1}{\tau^{2}} 
    \|Z^{\le 4} w\|_{L^2L^2(\tilde{C}^{\frac{\tau}{2}}_\tau)} +
    \frac{1}{\tau}
    \|(\partial_t+\partial_r) Z^{\le 3} w\|_{L^2L^2(\tilde{C}^{\frac{\tau}{2}}_\tau)}
  \end{align}
\end{lemma}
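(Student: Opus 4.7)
The plan is to prove each of the three estimates by rescaling the relevant region to a fixed unit-scale box, applying a four-dimensional Sobolev embedding adapted to that box, and then translating the rescaled derivatives back into the admissible vector fields $Z$ and the good derivative $\partial_t+\partial_r$. These are the space-time Klainerman-Sobolev inequalities of \cite{MTT}, to which I will refer for the full bookkeeping. The Jacobians of the three rescalings---$\tau R^3$ for $C^R_\tau$, $\tau^3 U$ for $C^U_\tau$, and $\tau^4$ for $C^{\tau/2}_\tau$---will produce the $(\mathrm{Vol})^{-1/2}$ factor in front of each $Z^{\le 4}$ term.

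The main algebraic input will be the identities
\[
(t-r)\partial_t = S - r(\partial_t+\partial_r), \qquad (t-r)\partial_r = t(\partial_t+\partial_r) - S,
\]
which follow from $S = \tfrac{t+r}{2}(\partial_t+\partial_r) + \tfrac{t-r}{2}(\partial_t-\partial_r)$, together with the product rule $(\partial_t+\partial_r)(r\,\cdot\,) = r(\partial_t+\partial_r) + \mathrm{Id}$. For \eqref{crt} on $C^R_\tau$, where $r\sim R$ and $t-r\sim\tau$, I would rescale by $\tilde t = t/\tau$ and $\tilde x = x/R$. The rescaled derivatives $\tau\partial_t$, $R\partial_r$, and $\Omega$ are then linear combinations of $S$, $\Omega$, and $R(\partial_t+\partial_r)$ with smooth coefficients bounded on the enlarged set $\tilde C^R_\tau$. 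After the Sobolev expansion, the pure-$Z$ contributions give $(\tau R^3)^{-1/2}\|Z^{\le 3}w\|_{L^2L^2}$, matching the first term in \eqref{crt}, while summands that carry one factor of $R(\partial_t+\partial_r)$ give $R(\tau R^3)^{-1/2}\|(\partial_t+\partial_r)Z^{\le 2}w\|_{L^2L^2} = (\tau R)^{-1/2}\|(\partial_t+\partial_r)Z^{\le 2}w\|_{L^2L^2}$, matching the second. The slight loss from $Z^{\le 3}$ to $Z^{\le 4}$ absorbs commutator errors and the cutoff from $C^R_\tau$ to $\tilde C^R_\tau$.

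For \eqref{cut} on $C^U_\tau$ the plan is to instead use the null-type coordinates $\tilde u = (t-r)/U$ and $\tilde v = (t+r)/\tau$, so that the rescaled derivatives become $U(\partial_t-\partial_r)$, $\tau(\partial_t+\partial_r)$, and $\Omega$. Since $r\sim\tau$ on this region, $\tau(\partial_t+\partial_r)w$ is essentially $r(\partial_t+\partial_r)w = (\partial_t+\partial_r)(rw) - w$, which naturally produces the characteristic factor $(\partial_t+\partial_r)(rZ^{\le 3}w)$ of the target. The bad rescaled derivative $U(\partial_t-\partial_r)$ is handled via $(t-r)(\partial_t-\partial_r) = 2S - (t+r)(\partial_t+\partial_r)$, reducing it to $S$ together with a $\tau(\partial_t+\partial_r)$ contribution, hence to the same building blocks as above. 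The estimate \eqref{ctt} on $C^{\tau/2}_\tau$ is the intermediate case $R=U=\tau/2$, where all of $S$, $\partial_t+\partial_r$, and $\Omega$ are of unit size after rescaling by $\tau$, and an isotropic Sobolev embedding applies directly.

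The main difficulty in each case will be an anisotropy in the Sobolev step: one has to arrange that only a single factor of $\partial_t+\partial_r$ (or of $(\partial_t+\partial_r)(r\,\cdot\,)$) appears per summand, since higher powers would spoil the $(\tau R)^{-1/2}$ and $(\tau^3 U)^{-1/2}$ scalings. This is handled in \cite{MTT} through an appropriate anisotropic Sobolev embedding together with careful commutator tracking, to which we refer for the complete argument.
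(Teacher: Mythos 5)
Your proposal takes essentially the same route as the paper: rescale each dyadic block to unit size, apply Sobolev-type control, cite \cite[Lemma 3.8]{MTT} for the bookkeeping, and trade $\partial_t-\partial_r$ for $S$ and $\partial_t+\partial_r$ using $(t-r)(\partial_t-\partial_r)=2S-(t+r)(\partial_t+\partial_r)$, which is the paper's identity \eqref{derivative_decay} rearranged. The only implementation difference worth flagging is that the paper uses multiplicative coordinates $t=e^s$, $r=e^{s+\rho}$ on $C^R_\tau$ (resp.\ $t+r=e^s$, $t-r=e^{s+\rho}$ on $C^U_\tau$), so that $\partial_s=S$ and the bad direction $\partial_\rho$ (namely $r\partial_r$, resp.\ $\tfrac{t-r}{2}(\partial_t-\partial_r)$) appear as literal coordinate derivatives, and it applies Sobolev on $\mathbb{S}^2$ together with the Fundamental Theorem of Calculus in $s$ and $\rho$ separately rather than a single four-dimensional Sobolev embedding. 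Each FTC step contributes at most one derivative in the square-root form $\|w\|^{1/2}\|\partial w\|^{1/2}$, which is precisely the structure of \eqref{crt1} and automatically yields a single copy of the bad direction, so the anisotropy concern you raise at the end never arises; your linear rescaling and the identities you supply are correct but leave that step to \cite{MTT}, where the paper simply builds the bound so it never needs addressing.
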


See \cite[Lemma 3.8]{MTT}.  The proof of \eqref{crt} follows by
changing coordinates to $t=e^s$, $r=e^{s+\rho}$ and applying Sobolev
  embeddings in $\omega$ and the Fundamental Theorem of Calculus in
  $s$ and $\rho$.  In fact, this yields
  \begin{equation}
    \label{crt1}
    \|w\|_{L^\infty L^\infty(C^R_\tau)} \lesssim
    \frac{1}{\tau^{\frac{1}{2}}R^{\frac{3}{2}}} \|Z^{\le 3}
    w\|_{L^2L^2(\tilde{C}^R_\tau)} + \frac{1}{\tau^{\frac{1}{2}}R}
    \|Z^{\le 3} w\|^{1/2}_{L^2L^2(\tilde{C}^R_\tau)} \|\partial_r
    Z^{\le 3} w\|^{1/2}_{L^2L^2(\tilde{C}^R_\tau)}
  \end{equation}
  for any $R>1$.
In order to get additional decay out of differentiated terms, such as
those appearing in the last term of \eqref{crt1}, the preceding work \cite{m_morgan} in $(1+4)$-dimensions
relied upon \cite[Lemma 3.11]{MTT}.  As \eqref{newLE} provides better
control on the good derivatives, we can argue more simply and instead
use
\begin{equation}
  \label{derivative_decay}
  (\partial_t-\partial_r) = \frac{2}{t-r} S - \frac{t+r}{t-r}(\partial_t+\partial_r).
\end{equation}
As $(t+r)/(t-r) = \O(1)$ on $\tilde{C}^R_\tau$ with $R\le \tau/4$,
\eqref{crt} follows immediately.  Replacing $w$ by
$\beta_{>\tau/2}(t-r) w$ in \eqref{crt1} and using that
$S(\beta_{>\tau/2}(t-r))=\O(1)$, \eqref{ctt} is obtained similarly.

When $U=1$, the other estimate \eqref{cut} is an immediate corollary
of \eqref{crt1} as we need only consider $\partial_r$ as a vector field.
When $U>1$, 
\begin{equation}\label{cut2}\|w\|_{L^\infty L^\infty (C^U_\tau)} \lesssim
  \frac{1}{\tau^{\frac{3}{2}} U^{\frac{1}{2}}} \|Z^{\le 3}
  w\|_{L^2L^2(\tilde{C}^U_\tau)} +
  \frac{U^{\frac{1}{2}}}{\tau^{\frac{3}{2}}} \|\partial Z^{\le 3}
  w\|_{L^2L^2(\tilde{C}^U_\tau)}
\end{equation}
follows from arguments similar to the above in coordinates $t+r=e^s$,
$t-r=e^{s+\rho}$.  Subsequently applying \eqref{derivative_decay}
yields \eqref{cut}.

It will also be helpful to have the following common weighted Sobolev
estimate of \cite{Klainerman}.
\begin{lemma}  Provided that $h\in C^\infty(\R^3)$,
 \begin{equation}
    \label{weighted_Sobolev}
    \|h\|_{L^\infty(A_R)} \lesssim R^{-1} \|Z^{\le 2} h\|_{L^2(\tilde{A}_R)}.
  \end{equation}
\end{lemma}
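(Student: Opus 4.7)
The plan is to prove this classical weighted Sobolev estimate via the standard rescaling argument that reduces to ordinary Sobolev embedding on a fixed annulus. First I would introduce the dilation $y = x/R$ and the rescaled function $\tilde h(y) := h(Ry)$, which sends $A_R$ to $A_1$ and $\tilde A_R$ to $\tilde A_1$ while preserving $L^\infty$ norms: $\|\tilde h\|_{L^\infty(A_1)} = \|h\|_{L^\infty(A_R)}$.

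Next, I would invoke the standard Sobolev embedding $H^2(\tilde A_1) \hookrightarrow L^\infty(A_1)$ in $\R^3$, yielding
\[
\|h\|_{L^\infty(A_R)} \;=\; \|\tilde h\|_{L^\infty(A_1)} \;\lesssim\; \sum_{|\alpha|\le 2} \|\partial_y^\alpha \tilde h\|_{L^2(\tilde A_1)}.
\]
Since $\partial_y^\alpha \tilde h(y) = R^{|\alpha|} (\partial_x^\alpha h)(Ry)$ and $dx = R^3\, dy$, unwinding the change of variables gives
\[
\|\partial_y^\alpha \tilde h\|_{L^2(\tilde A_1)} = R^{|\alpha| - 3/2} \|\partial_x^\alpha h\|_{L^2(\tilde A_R)}.
\]

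Finally, I would convert partial derivatives to the admissible vector fields $Z$. Since $h$ depends only on $x$, we have $Sh = r\partial_r h$, and on $\tilde A_R$ the relation $r \approx R$ combined with $|\partial_r h| \le R^{-1}|Sh|$ and the identity $\ang = -r^{-2} x \times \Omega$ yields $|\partial h| \lesssim R^{-1} |Z h|$ pointwise. Commuting one more time (using \eqref{commutators2}) upgrades this to $R^{|\alpha|}|\partial^\alpha h| \lesssim |Z^{\le |\alpha|} h|$ on $\tilde A_R$ for $|\alpha|\le 2$. Substituting into the previous display gives $\|h\|_{L^\infty(A_R)} \lesssim R^{-3/2} \|Z^{\le 2} h\|_{L^2(\tilde A_R)}$, and since $R \ge 1$, this is stronger than the claimed $R^{-1}$ bound.

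The argument is essentially routine; there is no real obstacle, since all the standard tools (dilation invariance of Sobolev embedding, the ellipticity of $\{r\partial_r, \Omega\}$ away from the origin) are already in place. The only point to be slightly careful about is ensuring the enlargement from $A_R$ to $\tilde A_R$ is strict enough that the rescaled domain $\tilde A_1$ contains an open neighborhood of $\overline{A_1}$, which is clear from the definitions in Section~\ref{notation}.
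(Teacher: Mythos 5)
Your proof is correct, and it in fact establishes the sharper bound $\|h\|_{L^\infty(A_R)}\lesssim R^{-3/2}\|Z^{\le 2}h\|_{L^2(\tilde A_R)}$, which implies the stated $R^{-1}$ bound since $R\ge 1$. The approach differs from the paper's. The paper localizes and applies Sobolev embedding directly in the $(r,\omega)$ coordinates, using $\partial_r$ and $\Omega$ (both of which are among the $Z$'s) as the embedding derivatives; the factor $R^{-1}$ then arises solely from converting the measure $dr\,d\omega$ to $dx = r^2\,dr\,d\omega$. You instead rescale $x\mapsto x/R$ to a unit annulus, apply $H^2\hookrightarrow L^\infty$ there, and then exchange the scaled Cartesian derivatives $R^{|\alpha|}\partial^\alpha$ for $Z^{\le|\alpha|}$ pointwise on $\tilde A_R$ using $S h = r\partial_r h$ (valid since $h$ is $t$-independent) and $|\ang h|\le r^{-1}|\Omega h|$, upgrading to second order via the commutation relation \eqref{commutators2}. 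Because you use the scaled radial field $S=r\partial_r$ rather than $\partial_r$ itself, you pick up an extra factor $R^{-1/2}$ of decay beyond what the paper records. Both routes are standard; yours is arguably more systematic and yields a strictly stronger inequality, though the paper never needs it. The one small notational slip is the remark that dilation ``sends $A_R$ to $A_1$'': for $R>1$ the image of $A_R$ is the annulus $\{1<|y|<2\}$, not the ball $A_1=\{|y|<2\}$, and the $R=1$ case (where $A_1$ contains the origin and the dilation is trivial) should be treated directly by ordinary Sobolev embedding; neither point affects the substance of the argument.
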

For $R=1$, standard Sobolev embeddings yield the result.  And for
$R>1$, after localizing, one only needs to apply Sobolev embeddings in 
$(r,\omega)$.  The decay is then a consequence of converting the
volume element $dr\,d\omega$ to $dx = r^2\,dr\,d\omega$.


  \section{Proof of Theorem~\ref{main_thm}}
We shall solve \eqref{main_equation} via iteration.  We set $u_0\equiv
0$, and for $k\ge 1$, let $u_k$ solve
\begin{equation}
  \label{iteration_equation}
\begin{cases}
  \Box u_k^I =a^{I,\alpha}_{JK} u_{k-1}^J\partial_\alpha u_{k-1}^K +
  b^{I,\alpha\beta}_{JK}\partial_\alpha u_{k-1}^J \partial_\beta u_{k-1}^K +
  A^{I,\alpha\beta}_{JK} u_{k-1}^K \partial_\alpha\partial_\beta u_k^J +
  B^{I,\alpha\beta\gamma}_{JK} \partial_\gamma u_{k-1}^K
  \partial_\alpha\partial_\beta u_k^J,\\
  u_k(0,\cd)=f,\quad \partial_tu_k(0,\cd)=g.
\end{cases}
\end{equation}
Note that the right side of the equation is $F^I(u_{k-1}, u'_{k-1},
u''_k)$.  We will show that the sequence $(u_k)$ converges.  The limit
of this sequence is the desired solution.

We shall work with $N=60$, though this is far from sharp.  In the
$r^p$-weighted estimates, we use $p=1$ throughout.

To show that the sequence is convergent, we first show a
certain boundedness.  Relying on that, we next demonstrate that the sequence
is Cauchy, which due to completeness of the spaces we are working in, finishes the proof.

\subsection{Boundedness}
For any fixed $T\le T_\varepsilon$, we set
  \begin{multline}
    \label{Mk}
    M_k = \|\partial Z^{\le 60} u_k\|_{L^\infty L^2}
+ \|Z^{\le 60} u_k\|_{LE^1}
+ \|\good(Z^{\le 60} u_k)\|_{L^2L^2} \\+ \|r^{-1}
    Z^{\le 60} u_k\|_{L^2 L^2} 
+\|\la r\ra^{\frac{1}{2}} \good Z^{\le 60} u_k\|_{L^\infty L^2}
+\|r^{-\frac{1}{2}} Z^{\le 60} u_k\|_{L^\infty L^2}
\\+ \sup_U \|r^{-\frac{1}{2}} \la t-r\ra^{-\frac{1}{2}}
(\partial_t+\partial_r)(r Z^{\le 60} u_k)\|_{L^2L^2(X_U)}
+ \sup_U \|r^{-\frac{1}{2}} (\log \la r\ra)^{-1} \la t-r\ra^{-\frac{1}{2}} Z^{\le 60}u_k\|_{L^2L^2(X_U)}
\\+ \Bigl(\sum_{\tau\le T}\sum_{R\le \tau/2} \|
\partial Z^{\le 50} u_k\|^2_{L^2L^2(\tilde{C}^R_\tau)}\Bigr)^{\frac{1}{2}} 
+ \Bigl[\sum_{\tau\le T} \sum_{R\le \tau/2} \Bigl(R\|\good
\partial Z^{\le 40} u_k\|_{L^2L^2(\tilde{C}^R_\tau)}\Bigr)^2\Bigr]^{\frac{1}{2}}
\\+
\sup_U\Bigl[\sum_{\tau\ge 4U} \Bigl(\frac{U^{\frac{1}{2}}}{\tau^{\frac{1}{2}}\log\la\tau\ra} 
\|\partial Z^{\le 50}
u_k\|_{L^2L^2(\tilde{C}^U_\tau)}\Bigr)^2\Bigr]^{\frac{1}{2}}
+ \sup_U \Bigl[\sum_{\tau\ge 4U}
\Bigl(\frac{U^{\frac{1}{2}}\tau^{\frac{1}{2}}}{\log\la \tau\ra}\| \good \partial
Z^{\le 40} u_k\|_{L^2L^2(\tilde{C}^U_\tau)}\Bigr)^2\Bigr]^{\frac{1}{2}}. 
  \end{multline}
We call these terms $I_k$, $II_k$, \dots, $XI_k$, $XII_k$
respectively.  We shall argue inductively to show that
\begin{equation}
  \label{MkGoal}
  M_k\le 2C_0\varepsilon
\end{equation}
for a uniform constant $C_0$ provided that $T\le T_\varepsilon$.
Indeed, for a universal constant $C_0$, we shall show that
\begin{multline}
  \label{MkGoal2}
  M_k^2 \le (C_0\varepsilon)^2 +C (\log\la T\ra)^3 M_{k-1}^2 M_k +
  C(\log\la T\ra)^3 M_{k-1}M_k^2 + C(\log\la T\ra)^2 M_{k-1}^4 \\+ C
  (\log\la T\ra)^2M_{k-1}^2M_k^2+ C(\log\la T\ra)^5 M_{k-1}^4 +
 C (\log\la T\ra)^5 M_{k-1}^3M_k.
\end{multline}
From this, it follows that $M_1\le C_0\varepsilon$.  Then by the
inductive hypothesis
and \eqref{lifespan}, provided that $c$ and $\varepsilon$ are
sufficiently small (compared to $C_0$), we obtain \eqref{MkGoal}.

We briefly summarize the proof of \eqref{MkGoal2} that is to follow.
Note that terms $I_k$ and $II_k$ are bounded using the energy and
integrated local energy estimate \eqref{le_pert}, while terms
$III_k,\dots, VIII_k$ represent the left side of \eqref{newLE}.  These
eight terms are the principal portions.  To prove \eqref{MkGoal2},
upon applying \eqref{le_pert} and \eqref{newLE}, the product rule will
guarantee that one factor from each nonlinear term will be lower order
(in terms of the number of vector fields).  As this
factor can afford additional vector fields, we may apply our decay
estimates \eqref{crt}, \eqref{cut}, \eqref{ctt}, or
\eqref{weighted_Sobolev} to it.

Closing the argument requires that we obtain additional decay from the
derivative that must be present on at least one factor of every
nonlinear term.  When this derivative is a ``good'' derivative
$\good$, this is relatively simple as the $r^p$-weight allows it to be
bounded with a larger weight, which effectively provides additional
decay to be used for the other factors.  For the $\partial_t-\partial_r$
directions, provided that the factor can admit an additional vector
field \eqref{derivative_decay} yields additional decay.  Here the
decay is in $t-r$, and the use of the ghost weight allows our
estimates to take advantage of this.  Terms $IX_k, \dots, XII_k$ of
\eqref{Mk} are commonly occurring factors where such a procedure is
utilized.

The resulting worst nonlinear term is when $u_{k-1}
(\partial_t-\partial_r) Z^{\le 60} u_{k-1}$ occurs within the right
side of \eqref{newLE}.  Here one integrates $(\partial_t-\partial_r)$
by parts.  When it lands on the lower order factor, the procedure
based in \eqref{derivative_decay} described above can be used.  When
it instead lands on the multiplier term, up to better behaved terms,
$\Box u_k$ is reproduced.  This term is replaced using the nonlinear
equation, and quartic interactions result.  The majority of the terms
can be handled as above and the worst case is again the $u_{k-1}
(\partial_t-\partial_r) Z^{\le 60} u_{k-1}$ interactions.  At this
point, however, the two high order factors can be combined using the
chain rule $w \partial w = \frac{1}{2}\partial w^2$, and integration
by parts can be used to move the derivative to the lower order factors
where \eqref{derivative_decay} can once again be applied.

We note that the extra logarithmic factor in term $VIII_k$ is largely
responsible for our lifespan being $\exp(c/\varepsilon^{\frac{1}{3}})$
rather than $\exp(c/\varepsilon)$ as is known for scalar equations.


In our applications of \eqref{le_pert} and \eqref{newLE}, we set
\begin{equation}\label{h}h^{I,\alpha\beta}_J = A^{I,\alpha\beta}_{JK} u_{k-1}^K +
B^{I,\alpha\beta\gamma}_{JK}\partial_\gamma u_{k-1}^K.
\end{equation}

We proceed with establishing the necessary bound for each of
$I_k,\dots, XII_k$.

\medskip
\noindent $\mathbf{[I_k+II_k]}:$
We begin by showing
\begin{equation}
  \label{I+II_goal}
  I_k^2 + II_k^2 \le (C_0 \varepsilon)^2 +(\log\la T\ra)^{\frac{5}{2}} M_{k-1} M^2_k + (\log\la T\ra)^{\frac{5}{2}}
  M^2_{k-1} M_k.
\end{equation}

Using \eqref{le_pert} and \eqref{smallness}, we
have that
\begin{multline}\label{I+II_1}I_k^2+II_k^2 \le C_0^2\varepsilon^2 + C\int_0^T\int |\Box_h Z^{\le 60}
  u_k|\Bigl(|\partial Z^{\le 60} u_k| + \frac{|Z^{\le 60}
    u_k|}{r}\Bigr)\,dx\,dt
  \\+ C\int_0^T\int \Bigl(|\partial \partial^{\le 1} u_{k-1}| +
  \frac{|\partial^{\le 1} u_{k-1}|}{\la r\ra}\Bigr) |\partial Z^{\le
    60} u_k|\Bigl(|\partial Z^{\le 60} u_k| + \frac{|Z^{\le 60}
    u_k|}{r}\Bigr)\,dx\,dt.
\end{multline}

We first note that we may apply \eqref{weighted_Sobolev} and the
finite speed of propagation to see that
the last term in the right of \eqref{I+II_1} is bounded by
\[ \sum_{0\le j\lesssim \log\la T\ra}  2^{-j}\|r^{-1}(r\partial)^{\le 1} Z^{\le 3}
  u_{k-1}\|_{L^\infty L^2}\int_0^T\int_{A_{2^j}}|\partial Z^{\le 60}
  u_k|\Bigl(|\partial Z^{\le 60} u_k|+\frac{|Z^{\le 60}
    u_k|}{r}\Bigr)\,dx\,dt.\]
The Schwarz inequality and a Hardy inequality then give that
\begin{equation}\label{I.1}\begin{split}
\int_0^T\int \Bigl(|\partial \partial^{\le 1} u_{k-1}| +&
  \frac{|\partial^{\le 1} u_{k-1}|}{\la r\ra}\Bigr) |\partial Z^{\le
    60} u_k|\Bigl(|\partial Z^{\le 60} u_k| + \frac{|Z^{\le 60}
    u_k|}{r}\Bigr)\,dx\,dt
\\&\lesssim \|\partial Z^{\le 3}u_{k-1}\|_{L^\infty L^2}
  \Bigl(\log\la T\ra \|Z^{\le 60} u_k\|^2_{LE^1} \Bigr)
\\
&\lesssim I_{k-1}\Bigl(\log \la T\ra II_k^2 \Bigr), 
\end{split}
\end{equation}
which is controlled by the right side of \eqref{I+II_goal}.

To address the second term in the right side of \eqref{I+II_1}, we
note that
\begin{equation}\label{product1}  |\Box_h Z^{\le 60} u_k| \lesssim \Bigl(|\partial Z^{\le 30} u_{k-1}|
  + |\partial Z^{\le 31} u_k|\Bigr) |\partial^{\le 1} Z^{\le 60}
  u_{k-1}|
  + |Z^{\le 31} u_{k-1}|\Bigl(|\partial Z^{\le 60} u_{k-1}| +
  |\partial Z^{\le 60} u_k|\Bigr).
\end{equation}
We first write
\begin{multline}\label{I.a}
  \int_0^T\int |\Box_h Z^{\le 60}
  u_k|\Bigl(|\partial Z^{\le 60} u_k| + \frac{|Z^{\le 60}
  u_k|}{r}\Bigr)\,dx\,dt
\\\lesssim \sum_{\tau\le T}
                               \Bigl(\sum_{R\le \tau/2}  \int\int_{C^R_\tau} |\Box_h Z^{\le 60}
  u_k|\Bigl(|\partial Z^{\le 60} u_k| + \frac{|Z^{\le 60}
    u_k|}{r}\Bigr)\,dx\,dt   \\ + \sum_{U\le
                               \tau/4}   \int
                               \int_{C^U_\tau} |\Box_h Z^{\le 60}
  u_k|\Bigl(|\partial Z^{\le 60} u_k| + \frac{|Z^{\le 60}
    u_k|}{r}\Bigr)\,dx\,dt\Bigr).
\end{multline}

By \eqref{crt} and \eqref{ctt}, we have
\begin{multline*}
\int \int_{C^R_\tau} \Bigl(|\partial Z^{\le 30} u_{k-1}|
  + |\partial Z^{\le 31} u_k|\Bigr) |\partial^{\le 1} Z^{\le 60}
  u_{k-1}| \Bigl(|\partial Z^{\le 60} u_k| + \frac{|Z^{\le 60}
    u_k|}{r}\Bigr)\,dx\,dt
\\  \lesssim \Bigl(\|\partial Z^{\le 34}
u_{k-1}\|_{L^2L^2(\tilde{C}^R_\tau)} + \|\partial Z^{\le 35}
u_k\|_{L^2L^2(\tilde{C}^R_\tau)}
+R\|\good \partial Z^{\le 33}
u_{k-1}\|_{L^2L^2(\tilde{C}^R_\tau)} + R\|\good \partial Z^{\le 34}
u_k\|_{L^2L^2(\tilde{C}^R_\tau)}
\Bigr)
\\\times \|\la r\ra^{-1}\partial^{\le 1} Z^{\le 60}
u_{k-1}\|_{L^2L^2(C^R_\tau)}  \|\la r\ra^{-1}(\partial
Z^{\le 60} u_k, r^{-1}Z^{\le 60}u_k)\|_{L^2L^2(C^R_\tau)}.
\end{multline*}
Noting, for example, that
\[\|\la r\ra^{-1} \partial Z^{\le 60} u\|^2_{L^2L^2}
= \sum_j 2^{-j} \Bigl(2^{-j} \|\partial Z^{\le 60}
  u\|^2_{L^2L^2([0,T]\times A_{2^j})}\Bigr) \lesssim \|Z^{\le 60} u\|^2_{LE^1},
\]
we thus have
\begin{multline}\label{I.b}
\sum_{\tau\le T} \sum_{R\le \tau/2}\int \int_{C^R_\tau} \Bigl(|\partial Z^{\le 30} u_{k-1}|
  + |\partial Z^{\le 31} u_k|\Bigr) |\partial^{\le 1} Z^{\le 60}
  u_{k-1}| \Bigl(|\partial Z^{\le 60} u_k| + \frac{|Z^{\le 60}
    u_k|}{r}\Bigr)\,dx\,dt
\\\lesssim \Bigl(IX_{k-1}+IX_k+X_{k-1}+X_k\Bigr)\Bigl(IV_{k-1}+II_{k-1}\Bigr)II_k.
\end{multline}
Similarly,
\begin{multline*}
  \int\int_{C^R_\tau} |Z^{\le 31} u_{k-1}|\Bigl(|\partial Z^{\le 60}
  u_{k-1}|+|\partial Z^{\le 60} u_k|\Bigr) \Bigl(|\partial Z^{\le 60}
  u_k| + \frac{|Z^{\le 60} u_k|}{r}\Bigr)\,dx\,dt
\\\lesssim \Bigl(\|r^{-1} Z^{\le 35} u_{k-1}\|_{L^2L^2(\tilde{C}^R_\tau)} + \|\good
Z^{\le 34} u_{k-1}\|_{L^2L^2(\tilde{C}^R_\tau)}\Bigr) \|\la
r\ra^{-\frac{1}{2}} (\partial Z^{\le 60} u_k, r^{-1}Z^{\le
  60}u_k)\|_{L^2L^2(C^R_\tau)} \\\times \Bigl(\|\la 
r\ra^{-\frac{1}{2}} \partial Z^{\le 60} u_{k-1}\|_{L^2L^2(C^R_\tau)} + \|\la
r\ra^{-\frac{1}{2}} \partial Z^{\le 60} u_k\|_{L^2L^2(C^R_\tau)}\Bigr). 
\end{multline*}
Since the initial data are supported in $\{|x|\le 2\}$, we get
\begin{equation}\label{L2L2toLE}
\|\la r\ra^{-\frac{1}{2}} \partial Z^{\le 60} u\|^2_{L^2L^2}
\le \sum_{j\lesssim \log\la T\ra} 2^{-j} \|\partial Z^{\le 60}
u\|^2_{L^2L^2([0,T]\times A_{2^j})} 
\lesssim \log\la T\ra \|Z^{\le 60} u\|_{LE^1}^2.
\end{equation}
It follows that
\begin{multline}\label{I.c}
  \sum_{\tau\le T} \sum_{R\le \tau/2}\int\int_{C^R_\tau} |Z^{\le 31} u_{k-1}|\Bigl(|\partial Z^{\le 60}
  u_{k-1}|+|\partial Z^{\le 60} u_k|\Bigr) \Bigl(|\partial Z^{\le 60}
  u_k| + \frac{|Z^{\le 60} u_k|}{r}\Bigr)\,dx\,dt
\\\lesssim \Bigl(IV_{k-1}+III_{k-1}\Bigr)\cdot II_k\cdot
(\log\la T\ra)^{\frac{1}{2}} \Bigl(II_{k-1}+II_k\Bigr).
\end{multline}

By \eqref{cut},
\begin{multline*}
\int \int_{C^U_\tau} \Bigl(|\partial Z^{\le 30} u_{k-1}|
  + |\partial Z^{\le 31} u_k|\Bigr) |\partial^{\le 1} Z^{\le 60}
  u_{k-1}| \Bigl(|\partial Z^{\le 60} u_k| + \frac{|Z^{\le 60}
    u_k|}{r}\Bigr)\,dx\,dt
\\  \lesssim \frac{U^{\frac{1}{2}}}{\tau^{\frac{1}{2}}}\Bigl(\|\partial Z^{\le 34}
u_{k-1}\|_{L^2L^2(\tilde{C}^U_\tau)} + \|\partial Z^{\le 35}
u_k\|_{L^2L^2(\tilde{C}^U_\tau)}
+\tau \|\good \partial Z^{\le 33}
u_{k-1}\|_{L^2L^2(\tilde{C}^U_\tau)} + \tau \|\good \partial Z^{\le 34}
u_k\|_{L^2L^2(\tilde{C}^U_\tau)}
\Bigr)
\\\times U^{-1}\|\la r\ra^{-\frac{1}{2}}\partial^{\le 1} Z^{\le 60}
u_{k-1}\|_{L^2L^2(C^U_\tau)}  \|\la r\ra^{-\frac{1}{2}}(\partial
Z^{\le 60} u_k, r^{-1}Z^{\le 60}u_k)\|_{L^2L^2(C^U_\tau)}.
\end{multline*}
Thus, using \eqref{L2L2toLE},
\begin{multline}\label{I.d}
  \sum_{\tau\le T} \sum_{U\le \tau/4}\int \int_{C^U_\tau} \Bigl(|\partial Z^{\le 30} u_{k-1}|
  + |\partial Z^{\le 31} u_k|\Bigr) |\partial^{\le 1} Z^{\le 60}
  u_{k-1}| \Bigl(|\partial Z^{\le 60} u_k| + \frac{|Z^{\le 60}
    u_k|}{r}\Bigr)\,dx\,dt
\\\lesssim \log\la T\ra \Bigl(XI_{k-1} + XI_k + XII_{k-1} +
XII_k\Bigr)\Bigl(\log\la T\ra  VIII_{k-1} +(\log \la T\ra)^{\frac{1}{2}}II_{k-1}\Bigr) \Bigl((\log\la T\ra)^{\frac{1}{2}} II_k\Bigr).
\end{multline}

Relying on \eqref{cut} again, we have
\begin{multline*}
  \int \int_{C^U_\tau} |Z^{\le 31} u_{k-1}| \Bigl(|\partial Z^{\le 60}
  u_{k-1}| + |\partial Z^{\le 60} u_k|\Bigr)\Bigl(|\partial Z^{\le 60} u_k| + \frac{|Z^{\le 60}
    u_k|}{r}\Bigr)\,dx\,dt
\\\lesssim \frac{1}{U^{\frac{1}{2}} \tau^{\frac{1}{2}}}\Bigl(\|Z^{\le
  35}u_{k-1}\|_{L^2L^2(\tilde{C}^U_\tau)} +
\|(\partial_t+\partial_r)(r Z^{\le 34}
u_{k-1})\|_{L^2L^2(\tilde{C}^U_\tau)}\Bigr)\\\times
\Bigl(\|\la r\ra^{-\frac{1}{2}}\partial
Z^{\le 60} u_{k-1}\|_{L^2L^2(C^U_\tau)} + \|\la r\ra^{-\frac{1}{2}}\partial
Z^{\le 60} u_k\|_{L^2L^2(C^U_\tau)}\Bigr) \| \la r\ra^{-\frac{1}{2}}(\partial Z^{\le 60} u_k,
r^{-1} Z^{\le 60} u_k)\|_{L^2L^2(C^U_\tau)}.
\end{multline*}
Upon using \eqref{L2L2toLE}, it follows that
\begin{multline}\label{I.e}
\sum_{\tau\le T} \sum_{U\le \tau/4}  \int \int_{C^U_\tau} |Z^{\le 31} u_{k-1}| \Bigl(|\partial Z^{\le 60}
  u_{k-1}| + |\partial Z^{\le 60} u_k|\Bigr)\Bigl(|\partial Z^{\le 60} u_k| + \frac{|Z^{\le 60}
    u_k|}{|x|}\Bigr)\,dx\,dt \\\lesssim
\Bigl(\log \la T\ra VIII_{k-1}+VII_{k-1}\Bigr) \Bigl((\log\la
T\ra)^{\frac{1}{2}}II_{k-1}+ (\log\la T\ra)^{\frac{1}{2}}II_k\Bigr)\Bigl((\log \la T\ra)^{\frac{1}{2}}II_k\Bigr).
\end{multline}

By plugging bounds \eqref{I.1}, \eqref{I.b}, \eqref{I.c}, \eqref{I.d},
\eqref{I.e} into \eqref{I+II_1} and\eqref{I.a}
we obtain the desired bound \eqref{I+II_goal}.

\medskip
\noindent$\mathbf{[III_k+IV_k+V_k+VI_k+VII_k+VIII_k]}$:
Here, relying on \eqref{newLE}, we show that
\begin{multline}
  \label{III_goal}
  III^2_k+IV^2_k+V^2_k+VI^2_k+VII^2_k+VIII^2_k \le (C_0\varepsilon)^2 + C (\log\la T\ra)^3
  M_{k-1}M^2_k + C (\log\la T\ra)^3 M^2_{k-1} M_k
\\+ C(\log \la T\ra)^5 M_{k-1}^4 + C(\log\la T\ra)^5 M_{k-1}^3 M_k.
\end{multline}
The first term in the right of \eqref{newLE} is bounded by
$C_0^2\varepsilon^2$ due to \eqref{smallness}.  We will proceed, in order, to
showing that each of the terms, other than the $\Box_h Z^{\le 60} u_k$
term, in the right side of
\eqref{newLE} are bounded by
\begin{equation}\label{III_goal2}C\log\la
  T\ra^3\Bigl(M_{k-1}^2M_k+M_{k-1}M_k^2\Bigr).
\end{equation}
We will argue separately that the $\Box_h Z^{\le 60} u_k$ term is
bounded by the right side of \eqref{III_goal}, which will establish
the desired bound.

To control the second term in the right side of \eqref{newLE}, we will
consider the integral at an arbitrary $t\in [0,T]$, and we fix a
dyadic value $\tau$ so that $t\in [\tau, 2\tau]$.
 For $1\le R\le \tau/2$, we can apply \eqref{crt} and \eqref{ctt} and a
Hardy inequality (after expanding the range of integration of the norm
of $r^{-1} |Z^{\le 60} u_k|$ from $A_R$ to $\R^3$) to see
\begin{multline*}
  \int_{A_R}  r |\partial^{\le 1} u_{k-1}| |\partial Z^{\le 60} u_k| \Bigl(|\partial
  Z^{\le 60} u_k| + \frac{|Z^{\le 60} u_k|}{r}\Bigr)\,dx
\\  \lesssim \Bigl(R^{-1} \|Z^{\le 5}
  u_{k-1}\|_{L^2L^2(\tilde{C}^R_\tau)} + \|\good Z^{\le
    4}u_{k-1}\|_{L^2L^2(\tilde{C}^R_\tau)}\Bigr) \|\partial Z^{\le 60}
  u_k(t,\cd)\|_{L^2(A_R)} \|\partial Z^{\le 60} u_k\|_{L^\infty L^2}.
\end{multline*}
And hence, using the Schwarz inequality,
\[
  \sup_{t\in [0,T]} \sum_{R\le \tau/2}  \int_{A_R}  r |\partial^{\le 1} u_{k-1}| |\partial Z^{\le 60} u_k| \Bigl(|\partial
  Z^{\le 60} u_k| + \frac{|Z^{\le 60} u_k|}{r}\Bigr)\,dx \lesssim
  \Bigl(IV_{k-1}+III_{k-1}\Bigr) I_k^2,\]
which is dominated by \eqref{III_goal2}.  For the remainder of this
term, \eqref{cut} and a Hardy inequality show that
\begin{multline*}
 \sum_{1\le U\le \tau/4} \int_{\la t-r\ra \approx U}  r |\partial^{\le 1} u_{k-1}| |\partial Z^{\le 60} u_k| \Bigl(|\partial
  Z^{\le 60} u_k| + \frac{|Z^{\le 60} u_k|}{r}\Bigr)\,dx
\\\lesssim \sup_U\Bigl(\frac{1}{\tau^{\frac{1}{2}}U^{\frac{1}{2}}} \|Z^{\le
  4} u_{k-1}\|_{L^2L^2(\tilde{C}^U_\tau)} +
\frac{1}{\tau^{\frac{1}{2}} U^{\frac{1}{2}}}
  \|(\partial_t+\partial_r)(rZ^{\le 3}
  u_{k-1})\|_{L^2L^2(\tilde{C}^U_\tau)}\Bigr) \|\partial Z^{\le 60} u_k(t,\cd)\|^2_{L^2}.
\end{multline*}
The supremum of this is bounded by
\[\Bigl(\log\la T\ra VIII_{k-1} + VII_{k-1}\Bigr) I_k^2.\]
Thus,
\begin{multline}
  \label{III.tbdy}
  \sup_{t\in [0,T]} \int r |\partial^{\le 1} u_{k-1}| |\partial Z^{\le 60} u_k| \Bigl(|\partial
  Z^{\le 60} u_k| + \frac{|Z^{\le 60} u_k|}{r}\Bigr)\,dx
\\\lesssim \Bigl(IV_{k-1}+III_{k-1} + \log\la T\ra VIII_{k-1} + VII_{k-1}\Bigr)I_k^2.
\end{multline}

We proceed to showing that
\begin{multline} \label{IIIBox}\sup_{U\ge 1}\sup_{t\in [0,T]} \Bigl|\int_0^t \int  e^{-\sigma_U(t-r)}
  \Box_h Z^{\le 60} u_k\cdot\Bigl(\partial_t+\partial_r\Bigr)(rZ^{\le
    60}u_k)\,dx\,dt\Bigr|
\\\lesssim (\log\la T\ra)^3 \Bigl(M_{k-1}^2M_k + M_{k-1}M_k^2\Bigr)
+ (\log \la T\ra)^5 M_{k-1}^4 + (\log \la T\ra)^5 M_{k-1}^3M_k.
\end{multline}

\begin{proof}[Proof of \eqref{IIIBox}] 
The most delicate terms in this analysis are those of the form
$u_{k-1}(\partial_t-\partial_r)Z^{\le 60}u_{k-1}$.  
Here we have a bad derivative occurring at the highest regularity, and
thus there is not room to apply, for example, \eqref{derivative_decay}
directly in order to get additional decay.

We begin by examining the
other terms.  To this end, we set $\omega = (1,-x/r)$ and note that
\begin{multline}\label{Box.minus.udu}
  |\Box_h Z^{\le 60} u_k^I - a^{I,\alpha}_{JK}\omega_\alpha u_{k-1}^J
  (\partial_t-\partial_r) Z^{\le 60} u_{k-1}^K| \lesssim
  |Z^{\le 30} u_{k-1}| |\good Z^{\le 60} u_{k-1}|
  +|\partial Z^{\le 30} u_{k-1}| |\partial^{\le 1} Z^{\le 60} u_{k-1}|
 \\ +|Z^{\le 30} u_{k-1}||\partial Z^{\le 59} u_{k-1}|+|\partial Z^{\le 31} u_k| |\partial^{\le 1} Z^{\le 60} u_{k-1}| 
  + |\partial^{\le 1} Z^{\le 30} u_{k-1}| |\partial^2 Z^{\le 59} u_k|.
\end{multline}

Using \eqref{crt}, \eqref{cut}, and \eqref{ctt} gives us that
\begin{multline*}
  \int\int_{C^R_\tau} r |Z^{\le 30} u_{k-1}| |\good Z^{\le 60}
  u_{k-1}| \Bigl(|\good Z^{\le 60} u_k| + r^{-1}
  |Z^{\le 60} u_k|\Bigr)\,dx\,dt
  \\\lesssim \Bigl(R^{-1}\|Z^{\le 34}
  u_{k-1}\|_{L^2L^2(\tilde{C}^R_\tau)} + \|\good Z^{\le 33}
  u_{k-1}\|_{L^2L^2(\tilde{C}^R_\tau)}\Bigr) \|\good Z^{\le 60}
  u_{k-1}\|_{L^2L^2(C^R_\tau)} \\\times \Bigl(\|\good Z^{\le 60}
  u_k\|_{L^2L^2(C^R_\tau)} + \|r^{-1}Z^{\le 60} u_k\|_{L^2L^2(C^R_\tau)}\Bigr),
\end{multline*}
and respectively
\begin{multline*}
  \int\int_{C^U_\tau} r |Z^{\le 30} u_{k-1}| |\good Z^{\le 60}
  u_{k-1}| \Bigl(|\good Z^{\le 60} u_k| + r^{-1}
  |Z^{\le 60} u_k|\Bigr) \,dx\,dt
  \\\lesssim \Bigl(\frac{1}{U^{\frac{1}{2}}\tau^{\frac{1}{2}}}\|Z^{\le 34}
  u_{k-1}\|_{L^2L^2(\tilde{C}^U_\tau)} + \frac{1}{U^{\frac{1}{2}}\tau^{\frac{1}{2}}}\|(\partial_t+\partial_r)(r Z^{\le 33}
  u_{k-1})\|_{L^2L^2(\tilde{C}^U_\tau)}\Bigr) \|\good Z^{\le 60}
  u_{k-1}\|_{L^2L^2(C^U_\tau)} \\\times  \Bigl(\|\good Z^{\le 60}
  u_k\|_{L^2L^2(C^U_\tau)} + \|r^{-1}Z^{\le 60} u_k\|_{L^2L^2(C^U_\tau)}\Bigr).
\end{multline*}
Upon summing over $R\le \tau/2$, $U\le \tau/4$ and $\tau \le T$, we get
\begin{multline}
  \label{IIIb1}
  \int_0^T\int r |Z^{\le 30} u_{k-1}||\good Z^{\le 60} u_{k-1}|
  \Bigl(|\good Z^{\le 60} u_k| + r^{-1}
  |Z^{\le 60} u_k|\Bigr)\,dx\,dt
  \\\lesssim (IV_{k-1}+III_{k-1}+\log\la T\ra VIII_{k-1} + VII_{k-1}) III_{k-1} (III_k+IV_k).
\end{multline}

Another application of \eqref{crt} and \eqref{ctt} gives
\begin{multline*}
  \int\int_{C^R_\tau} r |\partial Z^{\le 30} u_{k-1}| |\partial^{\le 1} Z^{\le
    60} u_{k-1}| \Bigl(|\good Z^{\le 60} u_k| + r^{-1}|Z^{\le 60}
  u_k|\Bigr)\,dx\,dt
\\  \lesssim \Bigl(\|\partial Z^{\le 34}
  u_{k-1}\|_{L^2L^2(\tilde{C}^R_\tau)} + R \|\good \partial Z^{\le 33}
  u_{k-1}\|_{L^2L^2(\tilde{C}^R_\tau)}\Bigr) R^{-1} \|\partial^{\le 1} Z^{\le
    60} u_{k-1}\|_{L^2L^2(C^R_\tau)} \\\times \Bigl(\|\good Z^{\le 60}
  u_k\|_{L^2L^2(C^R_\tau)} + \|r^{-1} Z^{\le 60} u_k\|_{L^2L^2(C^R_\tau)}\Bigr).
\end{multline*}
Similarly, using \eqref{cut}, we get
\begin{multline*}
  \int\int_{C^U_\tau}  |\partial Z^{\le 30} u_{k-1}| |\partial^{\le 1} Z^{\le
    60} u_{k-1}| \Bigl(|(\partial_t+\partial_r)(r Z^{\le 60} u_k)|\Bigr)\,dx\,dt
\\  \lesssim \Bigl(\frac{U^{\frac{1}{2}}}{\tau^{\frac{1}{2}}}\|\partial Z^{\le 34}
  u_{k-1}\|_{L^2L^2(\tilde{C}^U_\tau)} + U^{\frac{1}{2}}\tau^{\frac{1}{2}}\|(\partial_t+\partial_r) \partial Z^{\le 33}
  u_{k-1}\|_{L^2L^2(\tilde{C}^U_\tau)}\Bigr)
  \frac{1}{U^{\frac{1}{2}}\tau^{\frac{1}{2}}}\|\partial^{\le 1} Z^{\le
    60} u_{k-1}\|_{L^2L^2(C^U_\tau)} \\\times \frac{1}{U^{\frac{1}{2}}\tau^{\frac{1}{2}}}\|(\partial_t+\partial_r)(r Z^{\le 60}
  u_k)\|_{L^2L^2(C^U_\tau)}.
\end{multline*}
Upon summing, these give
\begin{multline}
  \label{IIIb2}
\int_0^T\int |\partial Z^{\le 30} u_{k-1}| |\partial^{\le 1} Z^{\le
  60} u_{k-1}| |(\partial_t+\partial_r)(rZ^{\le 60}u_k)|\,dx\,dt
\\\lesssim (IX_{k-1}+X_{k-1})(II_{k-1} + IV_{k-1})(III_k+IV_k) \\+
(\log\la T\ra)^2\Bigl(XI_{k-1}+XII_{k-1}\Bigr)\Bigl((\log\la
T\ra)^{\frac{1}{2}} II_{k-1} + \log\la T\ra VIII_{k-1}\Bigr) VII_k.
\end{multline}
Following the same argument, we also obtain
\begin{multline}
  \label{IIIb3}
\int_0^T\int |\partial Z^{\le 31} u_{k}| |\partial^{\le 1} Z^{\le
  60} u_{k-1}| |(\partial_t+\partial_r)(rZ^{\le 60}u_k)|\,dx\,dt
\\\lesssim (IX_{k}+X_{k})(II_{k-1} + IV_{k-1})(III_k+IV_k) \\+
(\log\la T\ra)^2\Bigl(XI_{k}+XII_{k}\Bigr)\Bigl((\log\la
T\ra)^{\frac{1}{2}} II_{k-1} + \log\la T\ra VIII_{k-1}\Bigr) VII_k.
\end{multline}

We now apply both \eqref{derivative_decay} and \eqref{crt} (and \eqref{ctt}) to see that
\begin{multline*}
  \int\int_{C^R_\tau} |Z^{\le 30} u_{k-1}| |\partial Z^{\le 59}
  u_{k-1}| |(\partial_t+\partial_r)(rZ^{\le 60} u_k)|\,dx\,dt
  \\\lesssim \Bigl(\frac{1}{R} \|Z^{\le 34}
    u_{k-1}\|_{L^2L^2(\tilde{C}^R_\tau)} + \|\good Z^{\le 33}
    u_{k-1}\|_{L^2L^2(\tilde{C}^R_\tau)}\Bigr)
   \Bigl(\frac{1}{R} \|Z^{\le 60} u_{k-1}\|_{L^2L^2(C^R_\tau)} +
   \|\good Z^{\le 59} u_{k-1}\|_{L^2L^2(C^R_\tau)}\Bigr) \\\times\Bigl( \|\good
   Z^{\le 60} u_k\|_{L^2L^2(C^R_\tau)} + \frac{1}{R}\| Z^{\le 60} u_k\|_{L^2L^2(C^R_\tau)}\Bigr).
\end{multline*}
While \eqref{derivative_decay} and \eqref{cut} give
  \begin{multline*}
    \int\int_{C^U_\tau} |Z^{\le 30} u_{k-1}||\partial Z^{\le 59} u_{k-1}|
    |(\partial_t+\partial_r)(r Z^{\le 60} u_k)|\,dx\,dt
    \\\lesssim \frac{1}{U^{\frac{1}{2}}\tau^{\frac{1}{2}}} \Bigl(\|Z^{\le 34}
    u_{k-1}\|_{L^2L^2(\tilde{C}^U_\tau)} +  \|(\partial_t+\partial_r)
    (r Z^{\le 33}
    u_{k-1})\|_{L^2L^2(\tilde{C}^U_\tau)}\Bigr)\\\times
   \frac{1}{U^{\frac{1}{2}}\tau^{\frac{1}{2}}}  \Bigl(\|Z^{\le 60} u_{k-1}\|_{L^2L^2(C^U_\tau)} +
  \|(\partial_t+\partial_r)(r Z^{\le 60}
   u_{k-1})\|_{L^2L^2(C^U_\tau)}\Bigr) \\\times \frac{1}{U^{\frac{1}{2}}\tau^{\frac{1}{2}}}
  \|(\partial_t+\partial_r)(r 
   Z^{\le 60} u_k)\|_{L^2L^2(C^U_\tau)}.
 \end{multline*}
Upon summation, this gives
\begin{multline}
  \label{IIIb4a}
  \int_0^T\int |Z^{\le 30} u_{k-1}| |\partial
  Z^{\le 59} u_{k-1}| |(\partial_t+\partial_r)(rZ^{\le 60} u_k)|\,dx\,dt
\\\lesssim (IV_{k-1} + III_{k-1})^2(III_k+IV_k)
\\+ \log\la T\ra \Bigl(\log \la T\ra VIII_{k-1} +
VII_{k-1}\Bigr)^2 VII_k.
\end{multline}
Very similar arguments give
\begin{multline*}
    \int\int_{C^R_\tau} |\partial^{\le 1} Z^{\le 30} u_{k-1}||\partial^2 Z^{\le 59} u_k|
    |(\partial_t+\partial_r)(r Z^{\le 60} u_k)|\,dx\,dt
    \\\lesssim \Bigl(\frac{1}{R} \|Z^{\le 35}
    u_{k-1}\|_{L^2L^2(\tilde{C}^R_\tau)} + \|\good Z^{\le 34}
    u_{k-1}\|_{L^2L^2(\tilde{C}^R_\tau)}\Bigr)
   \Bigl(\frac{1}{R} \|\partial Z^{\le 60} u_k\|_{L^2L^2(C^R_\tau)} +
   \|\good Z^{\le 60} u_k\|_{L^2L^2(C^R_\tau)}\Bigr) \\\times\Bigl( \|\good
   Z^{\le 60} u_k\|_{L^2L^2(C^R_\tau)} + \frac{1}{R}\| Z^{\le 60} u_k\|_{L^2L^2(C^R_\tau)}\Bigr)
  \end{multline*}
and
  \begin{multline*}
    \int\int_{C^U_\tau} |\partial^{\le 1} Z^{\le 30} u_{k-1}||\partial^2 Z^{\le 59} u_k|
    |(\partial_t+\partial_r)(r Z^{\le 60} u_k)|\,dx\,dt
    \\\lesssim \frac{1}{U^{\frac{1}{2}}\tau^{\frac{1}{2}}} \Bigl(\|Z^{\le 35}
    u_{k-1}\|_{L^2L^2(\tilde{C}^U_\tau)} +  \|(\partial_t+\partial_r)
    (r Z^{\le 34}
    u_{k-1})\|_{L^2L^2(\tilde{C}^U_\tau)}\Bigr)\\\times
   \frac{1}{U^{\frac{1}{2}}\tau^{\frac{1}{2}}}  \Bigl(\|\partial Z^{\le 60} u_k\|_{L^2L^2(C^U_\tau)} +
  \|(\partial_t+\partial_r)(r Z^{\le 60}
   u_k)\|_{L^2L^2(C^U_\tau)}\Bigr) \\\times \frac{1}{U^{\frac{1}{2}}\tau^{\frac{1}{2}}}
  \|(\partial_t+\partial_r)(r 
   Z^{\le 60} u_k)\|_{L^2L^2(C^U_\tau)},
 \end{multline*}
which yield
\begin{multline}
  \label{IIIb4}
  \int_0^T\int |\partial^{\le 1} Z^{\le 30} u_{k-1}| |\partial^2
  Z^{\le 59} u_k| |(\partial_t+\partial_r)(rZ^{\le 60} u_k)|\,dx\,dt
\\\lesssim (IV_{k-1} + III_{k-1})(II_k+III_k)(III_k+IV_k)
\\+ \Bigl(\log \la T\ra VIII_{k-1} +
VII_{k-1}\Bigr)\Bigl(II_k + \log\la T\ra VII_k\Bigr) VII_k.
\end{multline}

As the right sides of \eqref{IIIb1}, \eqref{IIIb2}, \eqref{IIIb3},
\eqref{IIIb4a}, and
\eqref{IIIb4} are bounded by \eqref{III_goal2}, to complete the bound
\eqref{IIIBox}, we need only examine
\[\sup_{U\ge 1}\sup_{t\in [0,T]}\Bigl|\int_0^t\int  e^{-\sigma_U(t-r)}
  a^{I,\alpha}_{JK}\omega_\alpha u_{k-1}^J (\partial_t-\partial_r)
  Z^{\le 60} u_{k-1}^K (\partial_t+\partial_r)(rZ^{\le 60} u_k^I)\,dx\,dt\Bigr|.\]
The argument that we shall use here is reminiscent of normal forms.

We first integrate by parts to see that
\begin{multline}
  \label{normalformIBP}
  \int_0^t\int e^{-\sigma_U(t-r)}
  a^{I,\alpha}_{JK}\omega_\alpha u_{k-1}^J (\partial_t-\partial_r)
  Z^{\le 60} u_{k-1}^K (\partial_t+\partial_r)(rZ^{\le 60}
  u_k^I)\,dx\,dt
\\=\int e^{-\sigma_U(t-r)} a^{I,\alpha}_{JK}\omega_\alpha
u^J_{k-1} Z^{\le 60} u^K_{k-1} (\partial_t+\partial_r)(rZ^{\le 60}
u^I_k)\,dx\Bigl|_0^t
\\+2\int_0^t\int r^{-1} e^{-\sigma_U(t-r)} a^{I,\alpha}_{JK}
\omega_\alpha u^J_{k-1} Z^{\le 60} u^K_{k-1}
(\partial_t+\partial_r)(rZ^{\le 60} u^I_k)\,dx\,dt
\\+2\int_0^t\int \sigma'_U(t-r)e^{-\sigma_U(t-r)}
a^{I,\alpha}_{JK}\omega_\alpha u^J_{k-1} Z^{\le 60} u^K_{k-1}
(\partial_t+\partial_r)(rZ^{\le 60} u_k^I)\,dx\,dt
\\-\int_0^t\int e^{-\sigma_U(t-r)} a^{I,\alpha}_{JK} \omega_\alpha
(\partial_t-\partial_r)u_{k-1}^J Z^{\le 60} u^K_{k-1}
(\partial_t+\partial_r)(rZ^{\le 60} u_k^I)\,dx\,dt
\\- \int_0^t \int e^{-\sigma_U(t-r)}
a^{I,\alpha}_{JK}\omega_\alpha u^J_{k-1} Z^{\le 60} u_{k-1}^K
(\partial_t^2-\partial_r^2)(rZ^{\le 60} u_k^I)\,dx\,dt.
\end{multline}
We shall proceed through arguments that will bound each term in
\eqref{normalformIBP}.

For $t$ fixed and $\tau \approx t$, we may
apply \eqref{crt}, \eqref{ctt}, and the Schwarz inequality to see that
\begin{multline*}
  \sum_{R\le \tau/2}\int_{A_R} r |u_{k-1}| |Z^{\le 60} u_{k-1}| \Bigl(|\good Z^{\le 60}
  u_k| + r^{-1}|Z^{\le 60} u_k|\Bigr)\,dx
\\\lesssim \Bigl(\|r^{-1} Z^{\le 4} u_{k-1}\|_{L^2L^2} +
\|\good Z^{\le 3} u_{k-1}\|_{L^2L^2}\Bigr) 
\|r^{-\frac{1}{2}}Z^{\le 60} u_{k-1}(t,\cd)\|_{L^2}\\\times
\Bigl(\|\la r\ra^{\frac{1}{2}}\good Z^{\le 60} u_k(t,\cd)\|_{L^2} +
\|r^{-\frac{1}{2}}Z^{\le 60} u_k(t,\cd)\|_{L^2}\Bigr).
\end{multline*}
Using \eqref{cut}, we instead get
\begin{multline*}
  \sum_{U\le \tau/4}\int_{\la t-r\ra\approx U} r |u_{k-1}| |Z^{\le 60} u_{k-1}| \Bigl(|\good Z^{\le 60}
  u_k| + r^{-1}|Z^{\le 60} u_k|\Bigr)\,dx
\\\lesssim \sup_U \Bigl[\frac{1}{U^{\frac{1}{2}}\tau^{\frac{1}{2}}}\Bigl(\|Z^{\le 4} u_{k-1}\|_{L^2L^2(\tC^U_\tau)} +
\|(\partial_t+\partial_r)(r Z^{\le 3} u_{k-1})\|_{L^2L^2(\tC^U_\tau)}\Bigr)\Bigr]
\|r^{-\frac{1}{2}}Z^{\le 60} u_{k-1}(t,\cd)\|_{L^2}\\\times
\Bigl(\|\la r\ra^{\frac{1}{2}} \good Z^{\le 60} u_k(t,\cd)\|_{L^2} +\|r^{-\frac{1}{2}}Z^{\le 60} u_k(t,\cd)\|_{L^2}\Bigr).
\end{multline*}
As such,
\begin{multline}
  \label{nf1}
  \sup_U\sup_{t\in [0,T]}\Bigl|\int e^{-\sigma_U(t-r)} a^{I,\alpha}_{JK}\omega_\alpha
u^J_{k-1} Z^{\le 60} u^K_{k-1} (\partial_t+\partial_r)(rZ^{\le 60}
u^I_k)\,dx\Bigl|_0^t\Bigr|
\\\lesssim \Bigl(IV_{k-1}+III_{k-1}+ \log\la T\ra
VIII_{k-1} + VII_{k-1}\Bigr)VI_{k-1}(V_k+VI_k).
\end{multline}

For the second term in the right of \eqref{normalformIBP}, provided
$R\le \tau/2$, we may apply \eqref{crt} or \eqref{ctt} to see that
\begin{multline}\label{nf2crt}
  \int\int_{C^R_\tau}  |u_{k-1}| |Z^{\le 60} u_{k-1}| \Bigl(|\good
  Z^{\le 60} u_k| + r^{-1} |Z^{\le 60} u_k|\Bigr)\,dx\,dt
\\\lesssim \Bigl(\|r^{-1} Z^{\le 4}
u_{k-1}\|_{L^2L^2(\tC^R_\tau)} + \|\good Z^{\le 3}
u_{k-1}\|_{L^2L^2(\tC^R_\tau)}\Bigr) \|r^{-1} Z^{\le 60}
u_{k-1}\|_{L^2L^2(C^R_\tau)} \\\times\Bigl(\|\good Z^{\le 60}
u_k\|_{L^2L^2(C^R_\tau)} + \|r^{-1} Z^{\le 60} u_k\|_{L^2L^2(C^R_\tau)}\Bigr),
\end{multline}
and for $U\le \tau/4$, \eqref{cut} gives
\begin{multline}\label{nf2cut}
  \int\int_{C^U_\tau}  r^{-1} |u_{k-1}| |Z^{\le 60} u_{k-1}| |(\partial_t+\partial_r)
 (r Z^{\le 60} u_k)|\,dx\,dt
\\\lesssim \Bigl(\frac{1}{\tau}\| Z^{\le 4}
u_{k-1}\|_{L^2L^2(\tC^U_\tau)} + \frac{1}{\tau}\|(\partial_t+\partial_r)(r Z^{\le 3}
u_{k-1})\|_{L^2L^2(\tC^U_\tau)}\Bigr)  \frac{1}{\tau} \|Z^{\le 60}
u_{k-1}\|_{L^2L^2(C^U_\tau)} \\\times \frac{1}{U^{\frac{1}{2}}\tau^{\frac{1}{2}}}\|(\partial_t+\partial_r)(r Z^{\le 60}
u_k)\|_{L^2L^2(C^U_\tau)}.
\end{multline}
Upon summing, this results in
\begin{equation}\label{nf2}
  \int_0^T\int r^{-1} |u_{k-1}| |Z^{\le 60} u_{k-1}|
  |(\partial_t+\partial_r)(rZ^{\le 60} u_k)|\,dx\,dt
\lesssim (IV_{k-1}+III_{k-1}) IV_{k-1} (III_k+IV_k+VII_k),
\end{equation}
which suffices for the bound in the supremum (in both $t$ and $U$) of
the second term in the right side of \eqref{normalformIBP}.

Similar to \eqref{nf2cut}, we estimate
\begin{multline*}
  \int\int_{C^U_\tau} \frac{1}{U} |u_{k-1}| |Z^{\le 60} u_{k-1}|
  |(\partial_t+\partial_r)(rZ^{\le 60} u_k)|\,dx\,dt
\\\lesssim \Bigl(\frac{1}{\tau^{\frac{1}{2}}U^{\frac{1}{2}}}\| Z^{\le 4}
u_{k-1}\|_{L^2L^2(\tC^U_\tau)} + \frac{1}{\tau^{\frac{1}{2}}U^{\frac{1}{2}}}\|(\partial_t+\partial_r)(r Z^{\le 3}
u_{k-1})\|_{L^2L^2(\tC^U_\tau)}\Bigr)  \frac{1}{\tau^{\frac{1}{2}}U^{\frac{1}{2}}} \|Z^{\le 60}
u_{k-1}\|_{L^2L^2(C^U_\tau)} \\\times \frac{1}{U^{\frac{1}{2}}\tau^{\frac{1}{2}}}\|(\partial_t+\partial_r)(r Z^{\le 60}
u_k)\|_{L^2L^2(C^U_\tau)}.
\end{multline*}
Since
\[\sigma_U'(t-r)\lesssim \frac{1}{\tau}\,\text{ on } C^R_\tau,\quad
  \sigma'_U(t-r)\lesssim \frac{1}{U}\,\text{ on } C^U_\tau,\]
we may combine this with \eqref{nf2crt} to see that
\begin{multline}
  \label{nf3}
  \int_0^T\int \sigma'_U(t-r) |u_{k-1}| |Z^{\le 60} u_{k-1}|
  |(\partial_t+\partial_r)(rZ^{\le 60} u_k)|\,dx\,dt
\\\lesssim (IV_{k-1}+III_{k-1})IV_{k-1} (III_k+IV_k)
+ (\log \la T\ra)^2 \Bigl(\log \la T\ra VIII_{k-1} + VII_{k-1}\Bigr)
VIII_{k-1} VII_k,
\end{multline}
which provides the appropriate bound for the supremums of the third
term in \eqref{normalformIBP}.

As \eqref{IIIb2} suffices to bound the fourth term in the right side of
\eqref{normalformIBP}, it remains to consider
\begin{multline}\label{nf4.1}
  \int_0^t\int e^{-\sigma_U(t-r)} a^{I,\alpha}_{JK}\omega_\alpha
  u_{k-1}^J Z^{\le 60} u_{k-1}^K (\partial_t^2-\partial_r^2)(rZ^{\le
    60} u_k^I)\,dx\,dt
\\=
\int_0^t\int e^{-\sigma_U(t-r)} a^{I,\alpha}_{JK}\omega_\alpha
  u_{k-1}^J Z^{\le 60} u_{k-1}^K \ang\cdot\ang (rZ^{\le
    60} u_k^I)\,dx\,dt
\\+\int_0^t\int r e^{-\sigma_U(t-r)} a^{I,\alpha}_{JK}\omega_\alpha
  u_{k-1}^J Z^{\le 60} u_{k-1}^K
  h^{I,\beta\gamma}_{\tilde{J}} \partial_\beta\partial_\gamma Z^{\le
    60} u_k^{\tilde{J}}\,dx\,dt
\\+\int_0^t\int r e^{-\sigma_U(t-r)} a^{I,\alpha}_{JK}\omega_\alpha
  u_{k-1}^J Z^{\le 60} u_{k-1}^K\Box_h Z^{\le
    60} u_k^I\,dx\,dt.
\end{multline}

For the first term in the right, we integrate by parts and use \eqref{angBound} to see that
\begin{multline*}
\sup_U\sup_{t\in [0,T]} \Bigl| \int_0^t\int e^{-\sigma_U(t-r)} a^{I,\alpha}_{JK}\omega_\alpha
  u_{k-1}^J Z^{\le 60} u_{k-1}^K \ang\cdot\ang (rZ^{\le
    60} u_k^I)\,dx\,dt\Bigr|
\\\lesssim
\int_0^T\int |Z^{\le 1} u_{k-1}| |Z^{\le 60} u_{k-1}| |\ang Z^{\le 60}
u_k|\,dx\,dt
+ \int_0^T\int r |u_{k-1}| |\ang Z^{\le 60} u_{k-1}| |\ang Z^{\le 60}
u_k|\,dx\,dt.
\end{multline*} 
 The preceding bound \eqref{IIIb1} shows that the latter term is
 controlled by \eqref{III_goal2}.  And \eqref{crt} (and \eqref{ctt}) and \eqref{cut},
 respectively, give
 \begin{multline*}
   \int\int_{C^R_\tau} |Z^{\le 1} u_{k-1}| |Z^{\le 60} u_{k-1}| |\ang Z^{\le 60}
u_k|\,dx\,dt 
\\\lesssim \Bigl(\|r^{-1} Z^{\le 5} u_{k-1}\|_{L^2L^2(\tC^R_\tau)} +
\|\good Z^{\le 4} u_{k-1}\|_{L^2L^2(\tC^R_\tau)}\Bigr) \|r^{-1} Z^{\le
  60} u_{k-1}\|_{L^2L^2(C^R_\tau)} \|\good Z^{\le 60} u_k\|_{L^2L^2(C^R_\tau)}
 \end{multline*}
and
\begin{multline*}
   \int\int_{C^U_\tau} |Z^{\le 1} u_{k-1}| |Z^{\le 60} u_{k-1}| |\ang Z^{\le 60}
u_k|\,dx\,dt
\\\lesssim \Bigl(\frac{1}{U^{\frac{1}{2}}\tau^{\frac{1}{2}}} \|Z^{\le 5}
  u_{k-1}\|_{L^2L^2(\tC^U_\tau)} +
  \frac{1}{U^{\frac{1}{2}}\tau^{\frac{1}{2}}}
  \|(\partial_t+\partial_r)(rZ^{\le 4}
  u_{k-1})\|_{L^2L^2(\tC^U_\tau)}\Bigr) \\\times \|r^{-1} Z^{\le 60}
  u_{k-1}\|_{L^2L^2(C^U_\tau)} \|\good Z^{\le 60} u_k\|_{L^2L^2(C^U_\tau)}.
 \end{multline*}
Hence,
  \[  \int_0^T\int |Z^{\le 1} u_{k-1}| |Z^{\le 60} u_{k-1}| |\ang Z^{\le 60}
u_k|\,dx\,dt
\lesssim (IV_{k-1}+III_{k-1}+\log\la T\ra VIII_{k-1}+VII_{k-1}) IV_{k-1} III_k,\]
which shows that 
\begin{equation}
  \label{nf4.2}
  \sup_U\sup_{t\in [0,T]} \Bigl| \int_0^t\int e^{-\sigma_U(t-r)} a^{I,\alpha}_{JK}\omega_\alpha
  u_{k-1}^J Z^{\le 60} u_{k-1}^K \ang\cdot\ang (rZ^{\le
    60} u_k^I)\,dx\,dt\Bigr| \lesssim \log\la T\ra M_{k-1}^2 M_k.
\end{equation}

Another integration by parts gives
\begin{multline}\label{nf4.2.b}
  \sup_U \sup_{t\in [0,T]}\Bigl|\int_0^t\int r e^{-\sigma_U(t-r)} a^{I,\alpha}_{JK}\omega_\alpha
  u_{k-1}^J Z^{\le 60} u_{k-1}^K
  h^{I,\beta\gamma}_{\tilde{J}} \partial_\beta\partial_\gamma Z^{\le
    60} u_k^{\tilde{J}}\,dx\,dt\Bigr|
\\\lesssim 
\sup_{t\in [0,T]} \int r |\partial^{\le 1}u_{k-1}|^2 |Z^{\le 60}
u_{k-1}| |\partial Z^{\le 60} u_k|\,dx
+
\int_0^T\int |\partial^{\le 1} u_{k-1}|^2 |Z^{\le 60}
u_{k-1}| |\partial Z^{\le 60} u_k|\,dx\,dt
\\+\int_0^T\int \frac{r}{\la t-r\ra} |\partial^{\le 1} u_{k-1}|^2
|Z^{\le 60} u_{k-1}| |\partial Z^{\le 60} u_k|\,dx\,dt
\\+\int_0^T\int r |\partial^{\le 1} u_{k-1}| |\partial \partial^{\le 1} u_{k-1}| |Z^{\le
  60} u_{k-1}| |\partial Z^{\le 60} u_k|\,dx\,dt
\\+ \int_0^T\int r |\partial^{\le 1} u_{k-1}|^2 |\partial Z^{\le 60}
u_{k-1}| |\partial Z^{\le 60} u_k|\,dx\,dt.
\end{multline}

If we argue precisely as in \eqref{nf1}, we see that
\begin{multline*}
  \sup_{t\in [0,T]} \int r |\partial^{\le 1}u_{k-1}|^2 |Z^{\le 60}
u_{k-1}| |\partial Z^{\le 60} u_k|\,dx \\\lesssim
\Bigl(IV_{k-1}+III_{k-1}+\log\la T\ra VIII_{k-1}+VII_{k-1}\Bigr)
VI_{k-1} \|\la r\ra^{\frac{1}{2}} |\partial^{\le 1} u_{k-1}| |\partial
Z^{\le 60}u_k|\|_{L^\infty L^2}.
\end{multline*}
Subsequently applying \eqref{weighted_Sobolev} gives that
\[\|\la r\ra^{\frac{1}{2}} |\partial^{\le 1} u_{k-1}| |\partial
Z^{\le 60}u_k|\|_{L^\infty L^2} \lesssim \|r^{-\frac{1}{2}} Z^{\le 3}
u_{k-1}\|_{L^\infty L^2} \|\partial Z^{\le 60} u_k\|_{L^\infty L^2}.\]
And hence,
\begin{multline}
  \label{nf4.2.a}
   \sup_{t\in [0,T]} \int r |\partial^{\le 1}u_{k-1}|^2 |Z^{\le 60}
u_{k-1}| |\partial Z^{\le 60} u_k|\,dx \\\lesssim
\Bigl(IV_{k-1}+III_{k-1}+\log\la T\ra VIII_{k-1}+VII_{k-1}\Bigr)
VI_{k-1} VI_{k-1} I_k.
\end{multline}

For the remaining terms in \eqref{nf4.2.b},
we continue to apply \eqref{crt}, \eqref{cut}, and \eqref{ctt} repeatedly.  These
give
\begin{multline*}
  \int\int_{C^R_\tau} |\partial^{\le 1} u_{k-1}|^2 |Z^{\le 60}
u_{k-1}| |\partial Z^{\le 60} u_k|\,dx\,dt
\\\lesssim \Bigl(\|r^{-1} Z^{\le 5} u_{k-1}\|_{L^2L^2(\tC^R_\tau)} +
\|\good Z^{\le 4} u_{k-1}\|_{L^2L^2(\tC^R_\tau)}\Bigr)^2 \|r^{-1}
Z^{\le 60} u_{k-1}\|_{L^2L^2(C^R_\tau)}
\frac{1}{R^{\frac{1}{2}}}\|r^{-\frac{1}{2}} \partial Z^{\le 60}
u_k\|_{L^2L^2(C^R_\tau)}
\end{multline*}
and
\begin{multline*}
  \int\int_{C^U_\tau} |\partial^{\le 1} u_{k-1}|^2 |Z^{\le 60}
u_{k-1}| |\partial Z^{\le 60} u_k|\,dx\,dt
\\\lesssim \Bigl(\frac{1}{U^{\frac{1}{2}}\tau^{\frac{1}{2}}} \|Z^{\le 5}
u_{k-1}\|_{L^2L^2(\tC^U_\tau)} +
\frac{1}{U^{\frac{1}{2}}\tau^{\frac{1}{2}}} \|(\partial_t+\partial_r)(r Z^{\le 4} u_{k-1})\|_{L^2L^2(\tC^U_\tau)}\Bigr)^2 \\\times\|r^{-1}
Z^{\le 60} u_{k-1}\|_{L^2L^2(C^U_\tau)} \frac{1}{\tau^{\frac{1}{2}}}\|r^{-\frac{1}{2}} \partial Z^{\le 60} u_k\|_{L^2L^2(C^U_\tau)}.
\end{multline*}
Similarly,
\begin{multline*}
  \int\int_{C^U_\tau} \frac{r}{\la t-r\ra} |\partial^{\le 1} u_{k-1}|^2 |Z^{\le 60}
u_{k-1}| |\partial Z^{\le 60} u_k|\,dx\,dt
\\\lesssim \Bigl(\frac{1}{U^{\frac{1}{2}}\tau^{\frac{1}{2}}} \|Z^{\le 5}
u_{k-1}\|_{L^2L^2(\tC^U_\tau)} +
\frac{1}{U^{\frac{1}{2}}\tau^{\frac{1}{2}}} \|(\partial_t+\partial_r) (r Z^{\le 4}
u_{k-1})\|_{L^2L^2(\tC^U_\tau)}\Bigr)^2 \\\times U^{-1}\|r^{-\frac{1}{2}}
Z^{\le 60} u_{k-1}\|_{L^2L^2(C^U_\tau)} \|r^{-\frac{1}{2}} \partial Z^{\le 60} u_k\|_{L^2L^2(C^U_\tau)}.
\end{multline*}
These combine to give
\begin{multline}\label{nf4.2.1}
  \int_0^T\int |\partial^{\le 1} u_{k-1}|^2 |Z^{\le 60}
u_{k-1}| |\partial Z^{\le 60} u_k|\,dx\,dt + \int\int_{C^U_\tau} \frac{r}{\la t-r\ra} |\partial^{\le 1} u_{k-1}|^2 |Z^{\le 60}
u_{k-1}| |\partial Z^{\le 60} u_k|\,dx\,dt
\\\lesssim (IV_{k-1}+III_{k-1})^2IV_{k-1}II_k + (\log\la T\ra
VIII_{k-1}+VII_{k-1})^2 \Bigl(IV_{k-1} II_k + \log\la T\ra VIII_{k-1} II_k\Bigr) .
\end{multline}
By related arguments,
\begin{multline*}
  \int\int_{C^R_\tau} r |\partial^{\le 1} u_{k-1}| |\partial \partial^{\le 1} u_{k-1}| |Z^{\le
  60} u_{k-1}| |\partial Z^{\le 60} u_k|\,dx\,dt
\\\lesssim \Bigl(\|r^{-1} Z^{\le 5} u_{k-1}\|_{L^2L^2(\tC^R_\tau)} +
\|\good Z^{\le 4} u_{k-1}\|_{L^2L^2(\tC^R_\tau)}\Bigr) \|r^{-1} Z^{\le 60} u_{k-1}\|_{L^2L^2(C^R_\tau)}\\\times
\Bigl(\|\partial Z^{\le 5} u_{k-1}\|_{L^2L^2(\tC^R_\tau)} + R
\|\good \partial Z^{\le 4} u_{k-1}\|_{L^2L^2(\tC^R_\tau)}\Bigr)
R^{-\frac{1}{2}} \|\la r\ra^{-\frac{1}{2}}\partial Z^{\le 60} u_k\|_{L^2L^2(C^R_\tau)}
\end{multline*}
and
\begin{multline*}
  \int\int_{C^U_\tau} r |\partial^{\le 1} u_{k-1}| |\partial \partial^{\le 1} u_{k-1}| |Z^{\le
  60} u_{k-1}| |\partial Z^{\le 60} u_k|\,dx\,dt
\\\lesssim \frac{1}{U^{\frac{1}{2}}}\Bigl(\frac{1}{U^{\frac{1}{2}}\tau^{\frac{1}{2}}} \|Z^{\le
  5} u_{k-1}\|_{L^2L^2(\tC^U_\tau)} +
\frac{1}{U^{\frac{1}{2}}\tau^{\frac{1}{2}}}
  \|(\partial_t+\partial_r)(rZ^{\le 4}
  u_{k-1})\|_{L^2L^2(\tC^U_\tau)}\Bigr)
\\\times
\Bigl(\frac{U^{\frac{1}{2}}}{\tau^{\frac{1}{2}}}\|\partial Z^{\le 5}
u_{k-1}\|_{L^2L^2(\tC^U_\tau)} + U^{\frac{1}{2}}\tau^{\frac{1}{2}} \|\good \partial Z^{\le 4}
u_{k-1}\|_{L^2L^2(\tC^U_\tau)}\Bigr) \\\times \frac{1}{U^{\frac{1}{2}}\tau^{\frac{1}{2}}}\|Z^{\le 60}
u_{k-1}\|_{L^2L^2(C^U_\tau)} \frac{1}{\tau^{\frac{1}{2}}}\|\partial Z^{\le 60} u_k\|_{L^2L^2(C^U_\tau)},
\end{multline*}
which gives
\begin{multline}\label{nf4.2.2}
  \int_0^T\int r |\partial^{\le 1} u_{k-1}| |\partial \partial^{\le 1} u_{k-1}| |Z^{\le
  60} u_{k-1}| |\partial Z^{\le 60} u_k|\,dx\,dt
\\\lesssim (IV_{k-1}+III_{k-1}) (IX_{k-1}+X_{k-1}) IV_{k-1} II_k
\\+ (\log\la T\ra )^2(\log\la T\ra VIII_{k-1} + VII_{k-1}) 
(XI_{k-1}+XII_{k-1}) VIII_{k-1} II_k.
\end{multline}
For the last term in \eqref{nf4.2.b}, we get
\begin{multline*}
  \int\int_{C^R_\tau} r |\partial^{\le 1} u_{k-1}|^2 |\partial Z^{\le 60}
u_{k-1}| |\partial Z^{\le 60} u_k|\,dx\,dt
\lesssim \Bigl(\|r^{-1} Z^{\le 5} u_{k-1}\|_{L^2L^2(\tC^R_\tau)} +
\|\good Z^{\le 4} u_{k-1}\|_{L^2L^2(\tC^R_\tau)}\Bigr)^2
\\\times\|r^{-\frac{1}{2}} \partial
Z^{\le 60} u_{k-1}\|_{L^2L^2(C^R_\tau)} \|r^{-\frac{1}{2}} \partial Z^{\le 60} u_k\|_{L^2L^2(C^R_\tau)}
\end{multline*}
and
\begin{multline*}
  \int\int_{C^U_\tau} r |\partial^{\le 1} u_{k-1}|^2 |\partial Z^{\le 60}
u_{k-1}| |\partial Z^{\le 60} u_k|\,dx\,dt
\\\lesssim \Bigl(\frac{1}{U^{\frac{1}{2}}\tau^{\frac{1}{2}}} \|Z^{\le 5}
u_{k-1}\|_{L^2L^2(\tC^U_\tau)} +
\frac{1}{U^{\frac{1}{2}}\tau^{\frac{1}{2}}} \|(\partial_t+\partial_r)(r Z^{\le 4} u_{k-1})\|_{L^2L^2(\tC^U_\tau)}\Bigr)^2 \\\times\|r^{-\frac{1}{2}}\partial
Z^{\le 60} u_{k-1}\|_{L^2L^2(C^U_\tau)} \|r^{-\frac{1}{2}} \partial Z^{\le 60} u_k\|_{L^2L^2(C^U_\tau)},
\end{multline*}
yielding
\begin{multline}\label{nf4.2.3}
   \int_0^T\int r |\partial^{\le 1} u_{k-1}|^2 |\partial Z^{\le 60}
u_{k-1}| |\partial Z^{\le 60} u_k|\,dx\,dt
\\\lesssim  (IV_{k-1}+III_{k-1})^2 II_{k-1} II_k + \log\la T\ra  (\log\la T\ra
VIII_{k-1} + VII_{k-1})^2 II_{k-1} II_k.
\end{multline}
The combination of \eqref{nf4.2.a}, \eqref{nf4.2.1}, \eqref{nf4.2.2}, and
\eqref{nf4.2.3} then establish that
\begin{equation}\label{nf4.2.4}
  \sup_U \sup_{t\in [0,T]}\Bigl|\int_0^t\int r e^{-\sigma_U(t-r)} a^{I,\alpha}_{JK}\omega_\alpha
  u_{k-1}^J Z^{\le 60} u_{k-1}^K
  h^{I,\alpha\beta}_{\tilde{J}} \partial_\alpha\partial_\beta Z^{\le
    60} u_k^{\tilde{J}}\,dx\,dt\Bigr|
\lesssim (\log\la T\ra)^3 M_{k-1}^3 M_k.
\end{equation}

We now turn our attention to
\[\sup_U \sup_{t\in [0,T]} \Bigl|\int_0^t \int r e^{-\sigma_U(t-r)}
  a^{I,\alpha}_{JK} \omega_\alpha u^J_{k-1} Z^{\le 60} u^K_{k-1}
  \Box_h Z^{\le 60} u_k^I\,dx\,dt\Bigr|.\]
We shall first show
\begin{multline}\label{nf5.goal} \sup_U \sup_{t\in [0,T]} \Bigl|\int_0^t \int r e^{-\sigma_U(t-r)}
  a^{I,\alpha}_{JK} \omega_\alpha u^J_{k-1} Z^{\le 60} u^K_{k-1}
  \Bigl(\Box_h Z^{\le 60} u_k^I -a^{I,\beta}_{\tJ\tK}\omega_\beta
  u^\tJ_{k-1}(\partial_t-\partial_r)Z^{\le 60} u^\tK_{k-1}\Bigr) \,dx\,dt\Bigr|
\\\lesssim (\log \la T\ra)^5 M_{k-1}^4 + (\log \la T\ra)^5 M_{k-1}^3M_k.
\end{multline}
By \eqref{Box.minus.udu}, we have
\begin{multline}\label{nf5.1} \sup_U \sup_{t\in [0,T]} \Bigl|\int_0^t \int r e^{-\sigma_U(t-r)}
  a^{I,\alpha}_{JK} \omega_\alpha u^J_{k-1} Z^{\le 60} u^K_{k-1}
  \Bigl(\Box_h Z^{\le 60} u_k^I -a^{I,\beta}_{\tJ\tK}\omega_\beta
  u^\tJ_{k-1}(\partial_t-\partial_r)Z^{\le 60} u^\tK_{k-1}\Bigr) \,dx\,dt\Bigr|
\\\lesssim \int_0^T\int r |u_{k-1}| |Z^{\le 60} u_{k-1}| |Z^{\le 30}
u_{k-1}| |\good Z^{\le 60} u_{k-1}|\,dx\,dt
\\+ \int_0^T\int r |u_{k-1}| |Z^{\le 60} u_{k-1}| |\partial Z^{\le 30}
u_{k-1}| |\partial^{\le 1} Z^{\le 60} u_{k-1}|\,dx\,dt
\\+\int_0^T\int r |u_{k-1}| |Z^{\le 60} u_{k-1}| |\partial Z^{\le 31}
u_{k}| |\partial^{\le 1} Z^{\le 60} u_{k-1}|\,dx\,dt
\\\int_0^T\int r |u_{k-1}| |Z^{\le 60} u_{k-1}| |\partial^{\le 1} Z^{\le 30}
u_{k-1}| |\partial^{2} Z^{\le 59} u_{k}|\,dx\,dt.
\end{multline}
We note that \eqref{crt} and \eqref{ctt} give
\begin{equation}
  \label{nf5.a}
  \| |u_{k-1}| |Z^{\le 60}u_{k-1}|\|_{L^2L^2(C^R_\tau)} \lesssim
  \Bigl(\|r^{-1} Z^{\le 4}u_{k-1}\|_{L^2L^2(\tC^R_\tau)} + \|\good
  Z^{\le 3} u_{k-1}\|_{L^2L^2(\tC^R_\tau)}\Bigr) \|r^{-1} Z^{\le 60}
  u_{k-1}\|_{L^2L^2(C^R_\tau)}
\end{equation}
and \eqref{cut} gives
\begin{multline}
  \label{nf5.b}
  \frac{\tau^{\frac{1}{2}}}{U^{\frac{1}{2}}} \||u_{k-1}| |Z^{\le 60}
  u_{k-1}|\|_{L^2L^2(C^U_\tau)}
\\\lesssim \frac{1}{\tau^{\frac{1}{2}}U^{\frac{1}{2}}}\Bigl(\|Z^{\le
  4} u_{k-1}\|_{L^2L^2(\tC^U_\tau)} +
\|(\partial_t+\partial_r)(rZ^{\le 3}
u_{k-1})\|_{L^2L^2(\tC^U_\tau)}\Bigr) \frac{1}{U^{\frac{1}{2}}
  \tau^{\frac{1}{2}}} \|Z^{\le 60} u_{k-1}\|_{L^2L^2(C^U_\tau)}.
\end{multline}
Arguing as in \eqref{IIIb1}, \eqref{IIIb2}, \eqref{IIIb3}, and
\eqref{IIIb4} where 
\[\|\good Z^{\le 60} u_k\|_{L^2L^2(C^R_\tau)}, \quad
  \frac{1}{U^{\frac{1}{2}}\tau^{\frac{1}{2}}}
  \|(\partial_t+\partial_r)(rZ^{\le 60} u_k)\|_{L^2L^2(C^U_\tau)} \]
are replaced by \eqref{nf5.a} and \eqref{nf5.b}, respectively, we
immediately get \eqref{nf5.goal}.

To complete the proof of \eqref{IIIBox}, we now consider
\begin{multline*}
  \int_0^t \int r e^{-\sigma_U(t-r)}
  a^{I,\alpha}_{JK} \omega_\alpha u^J_{k-1} Z^{\le 60} u^K_{k-1}
  \Bigl(a^{I,\beta}_{\tJ\tK}\omega_\beta
  u^\tJ_{k-1}(\partial_t-\partial_r)Z^{\le 60} u^\tK_{k-1}\Bigr) \,dx\,dt
  \\= \frac{1}{2}
   \int_0^t \int r e^{-\sigma_U(t-r)}
 u^J_{k-1}
  u^\tJ_{k-1}(\partial_t-\partial_r)\Bigl[  a^{I,\alpha}_{JK}
  \omega_\alpha
  a^{I,\beta}_{\tJ\tK}\omega_\beta
   Z^{\le 60} u^K_{k-1}
  Z^{\le 60} u^\tK_{k-1}\Bigr] \,dx\,dt.
\end{multline*}
Integration by parts shows that
\begin{multline}\label{nf6.1}
\sup_U \sup_{t\in [0,T]}\Bigl|\int_0^t \int r e^{-\sigma_U(t-r)}
  a^{I,\alpha}_{JK} \omega_\alpha u^J_{k-1} Z^{\le 60} u^K_{k-1}
  \Bigl(a^{I,\beta}_{\tJ\tK}\omega_\beta
  u^\tJ_{k-1}(\partial_t-\partial_r)Z^{\le 60} u^\tK_{k-1}\Bigr)
  \,dx\,dt\Bigr|
  \\\lesssim
  \sup_{t\in [0,T]} \int r |u_{k-1}|^2 |Z^{\le 60} u_{k-1}|^2\,dx
  + \int_0^T\int \frac{r}{\la t-r\ra} |u_{k-1}|^2 |Z^{\le 60}
u_{k-1}|^2\,dx\,dt
\\+ \int_0^T\int |u_{k-1}|^2 |Z^{\le 60}
u_{k-1}|^2\,dx\,dt
+ \int_0^T\int r |u_{k-1}| |\partial u_{k-1}| |Z^{\le 60} u_{k-1}|^2\,dx\,dt.
\end{multline}

For the first term, we first consider a fixed $t$ and set $\tau\approx
t$.  Then by \eqref{crt} (and \eqref{ctt}),
\[  \int_{A_R} r |u_{k-1}|^2 |Z^{\le 60} u_{k-1}|^2\,dx
  \lesssim \Bigl(\|r^{-1}Z^{\le 4}u_{k-1}\|_{L^2L^2(\tC^R_\tau)} +
  \|\good Z^{\le 3} u_{k-1}\|_{L^2L^2(\tC^R_\tau)}\Bigr)^2
  \|r^{-\frac{1}{2}} Z^{\le 60} u_{k-1}(t,\cd)\|_{L^2}^2,
\]
and by \eqref{cut}
\begin{multline*}
  \int_{\la t-r\ra\approx U}  r |u_{k-1}|^2 |Z^{\le 60} u_{k-1}|^2\,dx
  \lesssim \Bigl(\frac{1}{U^{\frac{1}{2}}\tau^{\frac{1}{2}}} \|Z^{\le
    4} u_{k-1}\|_{L^2L^2(\tC^U_\tau)} +
  \frac{1}{U^{\frac{1}{2}}\tau^{\frac{1}{2}}}
  \|(\partial_t+\partial_r)rZ^{\le 3}
  u_{k-1}\|_{L^2L^2(\tC^U_\tau)}\Bigr)^2\\\times \|r^{-\frac{1}{2}} Z^{\le 60}
  u_{k-1}(t,\cd)\|^2_{L^2(\{\la t-r\ra\approx U\})}. 
\end{multline*}
Upon summing over $R\le \tau/2$, $U\le \tau/4$ and then taking a supremum in $t$,
we obtain
\begin{equation}
  \label{nf6.2}
  \sup_{t\in [0,T]} \int r |u_{k-1}|^2 |Z^{\le 60} u_{k-1}|^2\,dx
  \lesssim (IV_{k-1}+III_{k-1})^2VI_{k-1}^2 + (\log\la T\ra
  VIII_{k-1}+VII_{k-1})^2 VI_{k-1}^2.
\end{equation}

For the second term in the right of \eqref{nf6.1}, subsequently applying
\eqref{crt} (and \eqref{ctt}) and \eqref{cut} when $R \le \tau/2$, $U\le \tau/4$ gives
\begin{multline}\label{nf6.3.a}
  \int\int_{C^R_\tau} \frac{r}{\la t-r\ra} |u_{k-1}|^2|Z^{\le
    60}u_{k-1}|^2\,dx\,dt
  \\\lesssim \Bigl(\|r^{-1} Z^{\le 4} u_{k-1}\|_{L^2L^2(\tC^R_\tau)} +
  \|\good Z^{\le 3} u_{k-1}\|_{L^2L^2(\tC^R_\tau)}\Bigr)^2 \|r^{-1}
  Z^{\le 60} u_{k-1}\|^2_{L^2L^2(C^R_\tau)}
\end{multline}
and
\begin{multline*}
  \int\int_{C^U_\tau} \frac{r}{\la t-r\ra} |u_{k-1}|^2|Z^{\le 60}u_{k-1}|^2\,dx\,dt
  \\\lesssim
\Bigl(\frac{1}{U^{\frac{1}{2}}\tau^{\frac{1}{2}}} \|Z^{\le 4}
u_{k-1}\|_{L^2L^2(\tC^U_\tau)} +
\frac{1}{U^{\frac{1}{2}}\tau^{\frac{1}{2}}}
\|(\partial_t+\partial_r)(rZ^{\le 3}
u_{k-1})\|_{L^2L^2(\tC^U_\tau)}\Bigr)^2\\\times
\Bigl(\frac{1}{U^{\frac{1}{2}}\tau^{\frac{1}{2}}} \|Z^{\le 60}
u_{k-1}\|_{L^2L^2(C^U_\tau)}\Bigr)^2.  
\end{multline*}
Upon summing, we obtain
\begin{multline}\label{nf6.3}
  \int_0^T\int \frac{r}{\la t-r\ra} |u_{k-1}|^2|Z^{\le
    60}u_{k-1}|^2\,dx\,dt
  \lesssim (IV_{k-1}+III_{k-1})^2 IV_{k-1}^2 \\+ (\log \la T\ra)^3
  \Bigl(\log \la T\ra VIII_{k-1} + VII_{k-1}\Bigr)^2 VIII_{k-1}^2.
\end{multline}
Moreover,
\begin{multline*}
  \int\int_{C^U_\tau} |u_{k-1}|^2|Z^{\le 60}u_{k-1}|^2\,dx\,dt
  \\\lesssim
\Bigl(\frac{1}{U^{\frac{1}{2}}\tau^{\frac{1}{2}}} \|Z^{\le 4}
u_{k-1}\|_{L^2L^2(\tC^U_\tau)} +
\frac{1}{U^{\frac{1}{2}}\tau^{\frac{1}{2}}}
\|(\partial_t+\partial_r)(rZ^{\le 3}
u_{k-1})\|_{L^2L^2(\tC^U_\tau)}\Bigr)^2
\|r^{-1}Z^{\le 60}
u_{k-1}\|_{L^2L^2(C^U_\tau)}^2.  
\end{multline*}
Hence when combined with \eqref{nf6.3.a}, this gives
\begin{equation}\label{nf6.4}
  \int_0^T\int  |u_{k-1}|^2|Z^{\le
    60}u_{k-1}|^2\,dx\,dt
  \lesssim (IV_{k-1}+III_{k-1})^2 IV_{k-1}^2 + 
  \Bigl(\log \la T\ra VIII_{k-1} + VII_{k-1}\Bigr)^2 IV_{k-1}^2.
\end{equation}

We may use the estimate on the second term in the right side of \eqref{nf5.1} to bound the last term in \eqref{nf6.1}.
Combining this with \eqref{nf6.2}, \eqref{nf6.3}, and \eqref{nf6.4}
then yields
\[
\sup_U \sup_{t\in [0,T]}\Bigl|\int_0^t \int r e^{-\sigma_U(t-r)}
  a^{I,\alpha}_{JK} \omega_\alpha u^J_{k-1} Z^{\le 60} u^K_{k-1}
  \Bigl(a^{I,\beta}_{\tJ\tK}\omega_\beta
  u^\tJ_{k-1}(\partial_t-\partial_r)Z^{\le 60} u^\tK_{k-1}\Bigr)
  \,dx\,dt\Bigr|
\lesssim (\log \la T\ra)^5 M_{k-1}^4,\]
which completes the proof of \eqref{IIIBox}.
\end{proof}

To finish the proof of \eqref{III_goal}, we consider the
remainder of the terms in the right side of \eqref{newLE} and show
that they are each controlled by \eqref{III_goal2}.

By \eqref{crt} and \eqref{ctt},
\begin{multline*}
  \int\int_{C^R_\tau} \Bigl(|\partial Z^{\le 1} u_{k-1}| + \frac{|Z^{\le 1}
    u_{k-1}|}{r}\Bigr) |\partial Z^{\le 60} u_k|
  |(\partial_t+\partial_r)(rZ^{\le 60} u_k)|\,dx\,dt  \\ \lesssim
  \Bigl( \|r^{-1} Z^{\le 5} u_{k-1}\|_{L^2L^2(\tilde{C}^R_\tau)} + \|\partial Z^{\le 5}
  u_{k-1}\|_{L^2L^2(\tilde{C}^R_\tau)} +R\|\good \partial Z^{\le
    4} u_{k-1}\|_{L^2L^2(\tilde{C}^R_\tau)}\Bigr) \\\times\| r^{-1}\partial Z^{\le
    60} u_k\|_{L^2L^2(C^R_\tau)} \Bigl(\|\good Z^{\le 60}
  u_k\|_{L^2L^2(C^R_\tau)} + \|r^{-1} Z^{\le 60} u_k\|_{L^2L^2(C^R_\tau)}\Bigr).
\end{multline*}
And by \eqref{cut},
\begin{multline*}
  \int\int_{C^U_\tau} \Bigl(|\partial Z^{\le 1} u_{k-1}| + \frac{|Z^{\le 1}
    u_{k-1}|}{r}\Bigr) |\partial Z^{\le 60} u_k|
  |(\partial_t+\partial_r)(rZ^{\le 60} u_k)|\,dx\,dt  \\\lesssim
  \Bigl(\| r^{-1} Z^{\le 5}
  u_{k-1}\|_{L^2L^2(\tilde{C}^U_\tau)}  + \frac{U^{\frac{1}{2}}}{\tau^{\frac{1}{2}}}\|\partial Z^{\le 5}
  u_{k-1}\|_{L^2L^2(\tilde{C}^U_\tau)} +U^{\frac{1}{2}}\tau^{\frac{1}{2}}\|\good \partial Z^{\le
   4} u_{k-1}\|_{L^2L^2(\tilde{C}^U_\tau)}\Bigr) \\\times\frac{1}{U^{\frac{1}{2}}}\|\la r\ra^{-\frac{1}{2}}\partial Z^{\le
    60} u_k\|_{L^2L^2(C^U_\tau)} \frac{1}{U^{\frac{1}{2}}\tau^{\frac{1}{2}}}\|(\partial_t+\partial_r) (r Z^{\le 60}
  u_k)\|_{L^2L^2(C^U_\tau)}.
\end{multline*}
Upon summing over $R \le \tau/2$, $U \le \tau/4$ and $\tau \le T$, we get
\begin{multline}\label{term4}  \int_0^T \int \Bigl(|\partial Z^{\le 1} u_{k-1}| + \frac{|Z^{\le 1}
    u_{k-1}|}{r}\Bigr) |\partial Z^{\le 60} u_k|
  |(\partial_t+\partial_r)(rZ^{\le 60} u_k)|\,dx\,dt
  \\\lesssim (IV_{k-1}+IX_{k-1}+X_{k-1})II_{k} (III_k + IV_k)
  \\+  \Bigl(IV_{k-1} + \log\la T\ra XI_{k-1} + \log \la
  T\ra XII_{k-1}\Bigr)  II_{k} VII_k.
\end{multline}

For the fifth term in the right side of \eqref{newLE},
applying \eqref{weighted_Sobolev} and the Schwarz inequality, we have
\begin{equation}\label{term5}\begin{split}
  \int_0^T \int |\partial^{\le 1} u_{k-1}| |\partial Z^{\le 60} &u_k| \Bigl(|\ang Z^{\le 60}
  u_k| + \frac{|Z^{\le 60} u_k|}{r}\Bigr)\,dx\,dt
 \\ &\lesssim \|\la r\ra^{-1} Z^{\le 3} u_{k-1}\|_{L^2L^2}
  \|\partial Z^{\le 60} u_k\|_{L^\infty L^2}\Bigl(\|\ang Z^{\le
    60}u_k\|_{L^2L^2} + \|r^{-1} Z^{\le 60} u_k\|_{L^2L^2}\Bigr)\\
  &\lesssim   IV_{k-1} I_k (III_k+IV_k).
\end{split}
\end{equation}
And for the sixth term, using \eqref{weighted_Sobolev},
\begin{equation}\label{term6}\begin{split}
  \int_0^T\int \la t-r\ra^{-1}|&\partial^{\le 1} u_{k-1}| |\partial
  Z^{\le 60} u_k| |(\partial_t+\partial_r)(rZ^{\le 60}u_k)|\,dx\,dt
  \\&\lesssim \log\la T\ra \Bigl(\sup_U \|\la r\ra^{-\frac{1}{2}} \la t-r\ra^{-\frac{1}{2}}
  Z^{\le 3} u_{k-1}\|_{L^2L^2(X_U)}\Bigr) \|\partial Z^{\le
    60} u_k\|_{L^\infty L^2} \\&\qquad\qquad\qquad\qquad\qquad\qquad\times\sup_U\Bigl(\|\la r\ra^{-\frac{1}{2}} \la
  t-r\ra^{-\frac{1}{2}} (\partial_t+\partial_r)(r Z^{\le
  60}u_k)\|_{L^2L^2(X_U)}\Bigr)\\
  &\lesssim (\log \la T\ra)^2 VIII_{k-1}\cdot I_k\cdot VII_k. 
\end{split}
\end{equation}

For the seventh term in \eqref{newLE}, we apply \eqref{crt} (and \eqref{ctt}), which yields
\begin{multline*}
\int\int_{C^R_\tau} r \Bigl(|(\partial_t+\partial_r) \partial^{\le 1}
  u_{k-1}| + \frac{|\partial^{\le 1} u_{k-1}|}{r}\Bigr) |\partial
  Z^{\le 60} u_k|^2\,dx\,dt
  \\\lesssim  \Bigl(\|\good Z^{\le 5}
  u_{k-1}\|_{L^2L^2(\tilde{C}^R_\tau)}
  + R\|\good \partial Z^{\le 4} u_{k-1}\|_{L^2L^2(\tilde{C}^R_\tau)}
  + \|r^{-1} Z^{\le 5} u_{k-1}\|_{L^2L^2(\tilde{C}^R_\tau)}\Bigr)
\\\times  \|\la r\ra^{-\frac{1}{2}} \partial Z^{\le 60}
  u_k\|^2_{L^2L^2(C^R_\tau)}.
\end{multline*}
On the $C^U_\tau$ regions, in addition to \eqref{cut}, we will also
apply \eqref{cut2} to the term that already contains a good
derivative, which gives
  \begin{multline*}
  \int \int_{C^U_\tau} r \Bigl(|(\partial_t+\partial_r) \partial^{\le 1}
  u_{k-1}| + \frac{|\partial^{\le 1} u_{k-1}|}{r}\Bigr) |\partial
  Z^{\le 60} u_k|^2\,dx\,dt
\\  \lesssim \Bigl(\frac{1}{U^{\frac{1}{2}}\tau^{\frac{1}{2}}}
\|Z^{\le 5} u_{k-1}\|_{L^2L^2(\tC^U_\tau)} +
\frac{1}{U^{\frac{1}{2}}\tau^{\frac{1}{2}}} \|(\partial_t+\partial_r) (r Z^{\le
  4}u_{k-1})\|_{L^2L^2(\tC^U_\tau)} +
\frac{U^{\frac{1}{2}}}{\tau^{\frac{1}{2}}}
\|\partial (\partial_t+\partial_r)(rZ^{\le 4} u_{k-1})\|_{L^2L^2(\tC^U_\tau)} \Bigr)
\\\times \|\la r\ra^{-\frac{1}{2}} \partial Z^{\le 60}
  u_k\|^2_{L^2L^2(C^U_\tau)}.
\end{multline*}
Using \eqref{angCommutator}, note that
\[
  \frac{U^{\frac{1}{2}}}{\tau^{\frac{1}{2}}} \|\partial
  (\partial_t+\partial_r) (rZ^{\le 4} u_{k-1})\|_{L^2L^2(\tC^U_\tau)}
  \lesssim \frac{U^{\frac{1}{2}}}{\tau^{\frac{1}{2}}} \|\partial
  Z^{\le 4} u_{k-1}\|_{L^2L^2(\tC^U_\tau)} + U^{\frac{1}{2}}
  \tau^{\frac{1}{2}} \|\good \partial Z^{\le 4} u_{k-1}\|_{L^2L^2(\tC^U_\tau)}.
\]
Thus, upon summing and using \eqref{L2L2toLE}, we obtain
\begin{multline}
  \label{term7}
  \int_0^T \int r \Bigl(|(\partial_t+\partial_r) \partial^{\le 1}
  u_{k-1}| + \frac{|\partial^{\le 1} u_{k-1}|}{r}\Bigr) |\partial
  Z^{\le 60} u_k|^2\,dx\,dt
  \\\lesssim (III_{k-1} + X_{k-1} + IV_{k-1}) (\log \la
  T\ra)^{\frac{1}{2}} II_k^2
  \\+\Bigl[VII_{k-1} +(\log\la T\ra)  \Bigl(VIII_{k-1} +
  XI_{k-1}+XII_{k-1}\Bigr)\Bigr]\log \la T\ra II_k^2.
\end{multline}

The desired bound \eqref{III_goal} now follows from \eqref{newLE} and
the application of \eqref{smallness}, \eqref{III.tbdy},
\eqref{IIIBox}, \eqref{term4}, \eqref{term5}, \eqref{term6},
\eqref{term7}, and \eqref{I+II_goal}.

\medskip
\noindent$\mathbf{[IX_k]}$:
As $R<\tau/2$ on each $C^R_\tau$, \eqref{derivative_decay} allows us to see that
\begin{equation}\label{VII}IX_k^2
  \lesssim \sum_{\tau\le T}\sum_{R\le \tau/4}
  \Bigl(\frac{1}{R^2}\|Z^{\le 51} u_k\|^2_{L^2L^2(\tilde{C}^R_\tau)} + \|
  \good Z^{\le 50} u_k\|^2_{L^2L^2(\tilde{C}^R_\tau)}\Bigr)\lesssim III_k^2 +
  IV_k^2. 
\end{equation}
And, thus, the appropriate bound is a consequence of \eqref{III_goal}.

\medskip
\noindent$\mathbf{[X_k]}$: Using \eqref{angBound} and
\eqref{derivative_decay}, when $R<\tau/2$, we see that
\begin{equation}\label{VIII.1}\begin{split}R\| 
    \good \partial Z^{\le 40} u_k\|_{L^2L^2(\tilde{C}^R_\tau)}
&\lesssim \|\partial Z^{\le 41} u_k\|_{L^2L^2(\tC^R_\tau)}
+ R\|
  (\partial_t+\partial_r)(\partial_t-\partial_r) Z^{\le 40} u_k\|_{L^2L^2(\tC^R_\tau)}\\
&\lesssim \| \partial Z^{\le 41} u_k\|_{L^2L^2(\tC^R_\tau)} + R\|
  \Box Z^{\le 40} u_k\|_{L^2L^2(\tC^R_\tau)}.
\end{split}
\end{equation}
On $\tC^R_\tau$, \eqref{VIII.1} is akin to the bounds of \cite{KlSid} and \cite{Sideris-Thomases}.
We note that 
\begin{equation}
  \label{lowOrderBox}
  | \Box Z^{\le 40} u_k| \lesssim \Bigl(|\partial Z^{\le 20} u_{k-1}| +
  | \partial Z^{\le 21}  u_k|\Bigr) |Z^{\le 41} u_{k-1}|
+ |Z^{\le 21} u_{k-1}|\Bigl(|\partial Z^{\le 40}  u_{k-1}| + |\partial Z^{\le
  41}   u_k|\Bigr).\end{equation}
Applying \eqref{crt}, we obtain
\begin{multline*}
  R\|Z^{\le 40}
  \Box u_k\|_{L^2L^2(\tC^R_\tau)} \lesssim R^{-1} \|Z^{\le 41}
  u_{k-1}\|_{L^2L^2(\tC^R_\tau)} \Bigl(\| \partial Z^{\le 24}
  u_{k-1}\|_{L^2L^2(\ttC^R_\tau)}
  + R\|\good\partial Z^{\le 23} u_{k-1}\|_{L^2L^2(\ttC^R_\tau)}
  \Bigr)
  \\
 + R^{-1} \|Z^{\le 41}
  u_{k-1}\|_{L^2L^2(\tC^R_\tau)} \Bigl(\|\partial Z^{\le 25} 
  u_{k}\|_{L^2L^2(\ttC^R_\tau)}
  + R\|\good\partial Z^{\le 24}u_{k}\|_{L^2L^2(\ttC^R_\tau)}
  \Bigr)
  \\
  +\Bigl(R^{-1} \|Z^{\le 25} u_{k-1}\|_{L^2L^2(\ttC^R_\tau)} +
  \|\good Z^{\le 24} u_{k-1}\|_{L^2L^2(\ttC^R_\tau)}\Bigr)
  \Bigl(\|\partial Z^{\le 40} u_{k-1}\|_{L^2L^2(\tC^R_\tau)} + \|\partial Z^{\le 41}
  u_k\|_{L^2L^2(\tC^R_\tau)}\Bigr).
\end{multline*}
Combining this with \eqref{VIII.1}, we see that
\begin{multline}\label{VIII}X_k \lesssim IX_k + IV_{k-1}\Bigl(IX_{k-1} +
  X_{k-1}+\log\la T\ra (XI_{k-1}+XII_{k-1})\Bigr) \\+
  IV_{k-1}\Bigl(IX_k + X_k+\log\la T\ra(XI_k+XII_k)\Bigr)
  + \Bigl(IV_{k-1} + III_{k-1}\Bigr)\Bigl(IX_{k-1} + IX_k\Bigr).
\end{multline}
We note that the occurrences of terms $XI$ and $XII$ are due to the
enlargement of $\tC^R_\tau$ when $R=\tau/4$ and $R= \tau/2$.  The tails here can be
bounded using the $C^U_\tau$ terms when $U=\tau/4$.  By \eqref{III_goal}, these terms are also bounded by the right side
of \eqref{MkGoal}.

\medskip
\noindent$\mathbf{[XI_k]}$:
For $U=1$, we have an immediate bound
 \[\sum_{\tau} \frac{1}{\tau (\log\la \tau\ra)^2} \|\partial Z^{\le 50} 
 u_k\|^2_{L^2L^2(\tC^{U=1}_\tau)} \lesssim \sum_{\tau} \frac{1}{(\log\la
   \tau\ra)^2} \|\partial Z^{\le 50}
 u_k\|^2_{L^\infty_tL^2_x} \le I^2_k.\]
For $U>1$, we can refer to \eqref{angBound} and \eqref{derivative_decay} to see that
\begin{multline*}\frac{U^{\frac{1}{2}}}{\tau^{\frac{1}{2}}\log\la\tau\ra} \|
  \partial Z^{\le 50}
  u_k\|_{L^2L^2(\tC^U_\tau)} \le
  \|r^{-\frac{1}{2}} \la t-r\ra^{-\frac{1}{2}}(\log\la r\ra)^{-1} Z^{\le 51} u_k\|_{L^2L^2(\tC^U_\tau)}
  \\+ \|r^{-\frac{1}{2}}\la t-r\ra^{-\frac{1}{2}}(\partial_t+\partial_r) (r Z^{\le 50} u_k)\|_{L^2L^2(\tC^U_\tau)}.
\end{multline*}
And thus,
\begin{equation}\label{IX}XI_k \lesssim I_k +VIII_k + VII_k.
\end{equation}
Then \eqref{I+II_goal} and \eqref{III_goal} give the boundedness in
terms of the right side of \eqref{MkGoal}.

\medskip
\noindent$\mathbf{[XII_k]}$:
Using \eqref{angBound}, we first note that
\begin{align*}\sum_\tau \frac{\tau}{(\log \la \tau\ra)^2} \|\good \partial Z^{\le
    40} &u_k\|_{L^2L^2(\tC^{U=1}_\tau)}^2
\\&\lesssim \sum_\tau \frac{1}{\tau (\log \la \tau \ra)^2} \Bigl(\|Z^{\le
  42} u_k\|_{L^2L^2(\tC^{U=1}_\tau)}^2+
\|(\partial_t+\partial_r)(rZ^{\le
  41}u_k)\|^2_{L^2L^2(\tC^{U=1}_\tau)}\Bigr)\\
&\lesssim VIII_k^2+VII_k^2.
\end{align*}

And for $1<U\le \tau/4$, \eqref{angBound} and \eqref{derivative_decay} give
\begin{align*}
  U^{\frac{1}{2}}\tau^{\frac{1}{2}} \|\good \partial Z^{\le 40} u_k\|_{L^2L^2(\tC^U_\tau)}&\lesssim
                                      \frac{U^{\frac{1}{2}}}{\tau^{\frac{1}{2}}}
                                      \|\partial Z^{\le 41}
                                      u_k\|_{L^2L^2(\tC^U_\tau)} +
                                      U^{\frac{1}{2}}\tau^{\frac{1}{2}}
                                      \|(\partial_t+\partial_r)(\partial_t-\partial_r)
                                      Z^{\le 40}
                                      u_k\|_{L^2L^2(\tC^U_\tau)}\\
&\lesssim \frac{U^{\frac{1}{2}}}{\tau^{\frac{1}{2}}} \|\partial Z^{\le
                                                                   41}
                                                                   u_k\|_{L^2L^2(\tC^U_\tau)}
                                                                   +
U^{\frac{1}{2}}\tau^{\frac{1}{2}}
                                                                   \|\Box
                                                                   Z^{\le
                                                                   40}
                                                                   u_k\|_{L^2L^2(\tC^U_\tau)}.
\end{align*}
Applying \eqref{cut} to the lower order terms of \eqref{lowOrderBox}
gives
\begin{multline*}
 U^{\frac{1}{2}}\tau^{\frac{1}{2}}\|\Box Z^{\le 40}
  u_k\|_{L^2L^2(\tC^U_\tau)}
\\\lesssim 
 \frac{1}{U^{\frac{1}{2}}\tau^{\frac{1}{2}}}\|Z^{\le
  41}u_{k-1}\|_{L^2L^2(\tC^U_\tau)}\Bigl(\frac{U^{\frac{1}{2}}}{\tau^{\frac{1}{2}}}\|\partial Z^{\le 24}
u_{k-1}\|_{L^2L^2(\ttC^U_\tau)} + U^{\frac{1}{2}}\tau^{\frac{1}{2}}\|\good  \partial Z^{\le
  23} u_{k-1}\|_{L^2L^2(\ttC^U_\tau)}\Bigr)
\\+  \frac{1}{U^{\frac{1}{2}}\tau^{\frac{1}{2}}}\|Z^{\le
  41}u_{k-1}\|_{L^2L^2(\tC^U_\tau)}\Bigl(\frac{U^{\frac{1}{2}}}{\tau^{\frac{1}{2}}}\|\partial Z^{\le 25}
u_{k}\|_{L^2L^2(\ttC^U_\tau)} + U^{\frac{1}{2}} \tau^{\frac{1}{2}}\|\good \partial Z^{\le
  24} u_{k}\|_{L^2L^2(\ttC^U_\tau)}\Bigr)
\\+\frac{1}{U^{\frac{1}{2}}\tau^{\frac{1}{2}}} \Bigl(\|Z^{\le 25}
  u_{k-1}\|_{L^2L^2(\ttC^U_\tau)} +
 \|(\partial_t+\partial_r)(r Z^{\le 24}
  u_{k-1})\|_{L^2L^2(\ttC^U_\tau)}\Bigr)\\\times \frac{U^{\frac{1}{2}}}{\tau^{\frac{1}{2}}}\Bigl(\|\partial Z^{\le
    40} u_{k-1}\|_{L^2L^2(\tC^U_\tau)} + \|\partial Z^{\le 41} u_k\|_{L^2L^2(\tC^U_\tau)}\Bigr).
\end{multline*}
From this, it follows that
\begin{multline}
  \label{X}
  XII_k \lesssim VIII_k + VII_k + XI_k + 
  VIII_{k-1}\Bigl(IX_{k-1}+X_{k-1}+   \log\la
  T\ra\Bigl(XI_{k-1}+XII_{k-1}\Bigr)\Bigr)
\\+ VIII_{k-1}\Bigl(IX_{k}+X_{k}+   \log\la T\ra\Bigl(XI_{k}+XII_{k}\Bigr)\Bigr)
+ \Bigl(\log\la T\ra VIII_{k-1} + VII_{k-1}\Bigr)\Bigl(XI_{k-1}+XI_k\Bigr).
\end{multline}
The proof of \eqref{MkGoal} is now complete as we can apply
\eqref{III_goal} and \eqref{IX}.

\medskip
\subsection{Convergence}
We now conclude the proof by showing that the sequence is Cauchy.
This is somewhat simplified by the extremely high regularity that was
used in the preceding subsection.  We set
\begin{equation}
  \label{Ak}
  A_k=\|\partial Z^{\le 20} (u_k-u_{k-1})\|_{L^\infty L^2} + \|Z^{\le 20} (u_k-u_{k-1})\|_{LE^1},
\end{equation}
and we shall show that for each $k$,
\begin{equation}
  \label{Ak_goal}
  A_k \le \frac{1}{2}A_{k-1},
\end{equation}
which suffices to complete the proof.

We use that
\begin{multline}
  \label{quadratic2}
  \Box u_k - \Box u_{k-1}
  = a^{I,\alpha}_{JK} (u_{k-1}^J-u_{k-2}^J)\partial_\alpha u_{k-1}^K
  + a^{I,\alpha}_{JK} u_{k-2}^J \partial_\alpha (u_{k-1}^K-u_{k-2}^K)
  \\+b^{I,\alpha\beta}_{JK} \partial_\alpha(u_{k-1}^J-u_{k-2}^J)
  \partial_\beta u_{k-1}^K + b^{I,\alpha\beta}_{JK} \partial_\alpha
  u_{k-2}^J \partial_\beta (u_{k-1}^K -u_{k-2}^K) \\+
  A^{I,\alpha\beta}_{JK}
  u_{k-1}^K\partial_\alpha\partial_\beta (u_k^J-u_{k-1}^J)
  +A^{I,\alpha\beta}_{JK}(u_{k-1}^K- u_{k-2}^K)
  \partial_\alpha\partial_\beta u_{k-1}^J
  \\
  +B^{I,\alpha\beta\gamma}_{JK} \partial_\gamma u_{k-1}^K
  \partial_\alpha\partial_\beta(u^J_k-u^J_{k-1})
  + B^{I,\alpha\beta\gamma}_{JK} \partial_\gamma (u_{k-1}^K-u_{k-2}^K)
  \partial_\alpha\partial_\beta u_{k-1}^J.
\end{multline}
With $h$ as in \eqref{h}, we apply \eqref{le_pert}.  Arguing as in
\eqref{product1}, though we need not take as much care to distinguish
the lower order terms from the higher order terms, this gives that
\begin{multline}\label{Aproduct}
  A_k^2 \lesssim \int_0^T\int |\partial^{\le 1}Z^{\le 20} (u_{k-1}-u_{k-2})| |\partial
  Z^{\le 21} u_{k-1}| \Bigl(|\partial Z^{\le 20}(u_k-u_{k-1})| +
  \frac{|Z^{\le 20} (u_k-u_{k-1})|}{r}\Bigr)\,dx\,dt
\\+ \int_0^T\int |\partial^{\le 1}Z^{\le 20} u_{k-2}| |\partial Z^{\le 20} (u_{k-1}-u_{k-2})|\Bigl(|\partial Z^{\le 20}(u_k-u_{k-1})| +
  \frac{|Z^{\le 20} (u_k-u_{k-1})|}{r}\Bigr)\,dx\,dt
\\+\int_0^T\int |\partial^{\le 1} Z^{\le 20} u_{k-1}| |\partial Z^{\le
  20} (u_k-u_{k-1})| \Bigl(|\partial Z^{\le 20}(u_k-u_{k-1})| +
  \frac{|Z^{\le 20} (u_k-u_{k-1})|}{r}\Bigr)\,dx\,dt.
\end{multline}

Applying \eqref{crt} and \eqref{ctt}, we obtain
\begin{multline*}
  \int\int_{C^R_\tau} |\partial^{\le 1}Z^{\le 20} (u_{k-1}-u_{k-2})| |\partial
  Z^{\le 21} u_{k-1}| \Bigl(|\partial Z^{\le 20}(u_k-u_{k-1})| +
  \frac{|Z^{\le 20} (u_k-u_{k-1})|}{r}\Bigr)\,dx\,dt\\\lesssim \Bigl(\|\partial Z^{\le
  25}u_{k-1}\|_{L^2L^2(\tC^R_\tau)} + R\|\good \partial Z^{\le
  24}u_{k-1}\|_{L^2L^2(\tC^R_\tau)}\Bigr)  \|\la
r\ra^{-\frac{3}{2}} \partial^{\le 1} Z^{\le 20}
(u_{k-1}-u_{k-2})\|_{L^2L^2(C^R_\tau)}\\\times
\Bigl(\|\la r\ra^{-\frac{1}{2}}\partial Z^{\le 20}
(u_k-u_{k-1})\|_{L^2L^2(C^R_\tau)} + \|\la r\ra^{-\frac{1}{2}}r^{-1}
Z^{\le 20}(u_k-u_{k-1})\|_{L^2L^2(C^R_\tau)}\Bigr).
\end{multline*}
And \eqref{cut} gives
\begin{multline*}
  \int\int_{C^U_\tau} |\partial^{\le 1}Z^{\le 20} (u_{k-1}-u_{k-2})| |\partial
  Z^{\le 21} u_{k-1}| \Bigl(|\partial Z^{\le 20}(u_k-u_{k-1})| +
  \frac{|Z^{\le 20} (u_k-u_{k-1})|}{r}\Bigr)\,dx\,dt
\\\lesssim
\frac{U^{\frac{1}{2}}}{\tau^{\frac{1}{2}}} \Bigl(\|\partial Z^{\le
  25}u_{k-1}\|_{L^2L^2(\tC^U_\tau)}+\tau\|\good \partial Z^{\le
  24}u_{k-1}\|_{L^2L^2(\tC^U_\tau)}\Bigr)
\|\la t-r\ra^{-1}\la r\ra^{-\frac{1}{2}} \partial^{\le 1} Z^{\le 20} (u_{k-1}-u_{k-2})\|_{L^2L^2(C^U_\tau)}
\\\times \Bigl(\|\la r\ra^{-\frac{1}{2}}\partial Z^{\le 20}
(u_k-u_{k-1})\|_{L^2L^2(C^U_\tau)} + \|\la r\ra^{-\frac{1}{2}}r^{-1}
Z^{\le 20}(u_k-u_{k-1})\|_{L^2L^2(C^U_\tau)}\Bigr).
\end{multline*}
We may use \eqref{lastHardy} and \eqref{L2L2toLE} to see that
\[\|\la t-r\ra^{-1}\la r\ra^{-\frac{1}{2}} \partial^{\le 1} Z^{\le 20}
  (u_{k-1}-u_{k-2})\|_{L^2L^2}
\lesssim (\log\la T\ra)^{\frac{1}{2}} \|Z^{\le
  20}(u_{k-1}-u_{k-2})\|_{LE^1}.\]
Thus, upon summing over $R\le \tau/2$, $U\le \tau/4$ and $\tau\le T$, we see that
\begin{multline}\label{A.1}
  \int_0^T\int |\partial^{\le 1}Z^{\le 20} (u_{k-1}-u_{k-2})| |\partial
  Z^{\le 21} u_{k-1}| \Bigl(|\partial Z^{\le 20}(u_k-u_{k-1})| +
  \frac{|Z^{\le 20} (u_k-u_{k-1})|}{r}\Bigr)\,dx\,dt
\\\lesssim (\log\la T\ra)^2 M_{k-1} A_{k-1} A_k.
\end{multline}

Applying \eqref{crt} (and \eqref{ctt}) and \eqref{cut}, respectively, give
\begin{multline*}
  \int\int_{C^R_\tau} |\partial^{\le 1}Z^{\le 20} u_{k-2}| |\partial Z^{\le 20} (u_{k-1}-u_{k-2})|\Bigl(|\partial Z^{\le 20}(u_k-u_{k-1})| +
  \frac{|Z^{\le 20} (u_k-u_{k-1})|}{r}\Bigr)\,dx\,dt
\\\lesssim \Bigl(\|r^{-1} Z^{\le 25} u_{k-2}\|_{L^2L^2(\tC^R_\tau)} +
\|\good Z^{\le 24} u_{k-2}\|_{L^2L^2(\tC^R_\tau)} \Bigr) \|\la
r\ra^{-\frac{1}{2}} \partial Z^{\le 20}
(u_{k-1}-u_{k-2})\|_{L^2L^2(C^R_\tau)} 
\\\times \Bigl(\|\la r\ra^{-\frac{1}{2}}\partial Z^{\le 20}
(u_k-u_{k-1})\|_{L^2L^2(C^R_\tau)} + \|\la r\ra^{-\frac{1}{2}}r^{-1}
Z^{\le 20}(u_k-u_{k-1})\|_{L^2L^2(C^R_\tau)}\Bigr)
\end{multline*}
and
\begin{multline*}
  \int\int_{C^U_\tau} |\partial^{\le 1}Z^{\le 20} u_{k-2}| |\partial Z^{\le 20} (u_{k-1}-u_{k-2})|\Bigl(|\partial Z^{\le 20}(u_k-u_{k-1})| +
  \frac{|Z^{\le 20} (u_k-u_{k-1})|}{r}\Bigr)\,dx\,dt
\\\lesssim \frac{1}{U^{\frac{1}{2}}\tau^{\frac{1}{2}}} \Bigl(\|Z^{\le 25}
u_{k-2}\|_{L^2L^2(\tC^U_\tau)} + \|(\partial_t+\partial_r)(rZ^{\le 24}u_{k-2})\|_{L^2L^2(\tC^U_\tau)}\Bigr)
\|\la r\ra^{-\frac{1}{2}} \partial Z^{\le 20} (u_{k-1}-u_{k-2})\|_{L^2L^2(C^U_\tau)}
\\\times \Bigl(\|\la r\ra^{-\frac{1}{2}}\partial Z^{\le 20}
(u_k-u_{k-1})\|_{L^2L^2(C^U_\tau)} + \|\la r\ra^{-\frac{1}{2}}r^{-1}
Z^{\le 20}(u_k-u_{k-1})\|_{L^2L^2(C^U_\tau)}\Bigr),
\end{multline*}
which, using \eqref{L2L2toLE}, imply 
\begin{multline}\label{A.2}
  \int_0^T\int |\partial^{\le 1}Z^{\le 20} u_{k-2}| |\partial Z^{\le 20} (u_{k-1}-u_{k-2})|\Bigl(|\partial Z^{\le 20}(u_k-u_{k-1})| +
  \frac{|Z^{\le 20} (u_k-u_{k-1})|}{r}\Bigr)\,dx\,dt
\\\lesssim (\log \la T\ra)^{2} M_{k-2} A_{k-1} A_k.
\end{multline}

The third term in \eqref{Aproduct} is very much of the same from as
the preceding term, and the exact same arguments yield

\begin{multline}\label{A.3}
  \int_0^T\int |\partial^{\le 1} Z^{\le 20} u_{k-1}| |\partial Z^{\le
  20} (u_k-u_{k-1})| \Bigl(|\partial Z^{\le 20}(u_k-u_{k-1})| +
  \frac{|Z^{\le 20} (u_k-u_{k-1})|}{r}\Bigr)\,dx\,dt\\
\lesssim (\log\la T\ra)^2 M_{k-1} A_k^2.
\end{multline}

It now follows from \eqref{Aproduct}, \eqref{A.1}, \eqref{A.2}, and
\eqref{A.3} that
\[A_k^2 \lesssim (\log\la T\ra)^2 (M_{k-1}+M_{k-2}) A_{k-1}A_k + (\log \la
  T\ra)^2 M_{k-1} A_k^2.\]
Using \eqref{MkGoal} and \eqref{lifespan}, provided $c\ll 1$, we may
bootstrap and obtain
\[A_k^2\lesssim c^4 \varepsilon^{\frac{2}{3}} A^2_{k-1}.\]
Thus, for $\varepsilon$ sufficiently small, we recover \eqref{Ak_goal},
which implies that the sequence is Cauchy and thus convergent.  This
completes the proof.


\bibliography{exterior}

\end{document}